\theoremstyle{plain}
\newtheorem{theo}{Theorem}[section] 
\newtheorem{prop}[theo]{Proposition}
\newtheorem{lemme}[theo]{Lemma}
\newtheorem{cor}[theo]{Corollary}
\newtheorem{defin}[theo]{Definition}
\theoremstyle{definition}
\newtheorem{rem}[theo]{Remark}
\newcommand{\R}{{\mathbb{R}}}
\newcommand{\N}{{\mathbb{N}}}
\newcommand{\C}{{\mathbb{C}}}
\newcommand{\be}{{\beta}}
\newcommand{\al}{{\alpha}}
\newcommand{\la}{{\lambda}}
\newcommand{\si}{{\sigma}}
\newcommand{\ga}{{\gamma}}
\newcommand{\om}{{\omega}}
\newcommand{\Om}{\Omega}
\newcommand{\La}{{\Lambda}}
\newcommand{\Si}{{\Sigma}}
\newcommand{\ep}{\epsilon}
\newcommand{\Ci}{{\mathcal{C}}^{\infty}} 
\newcommand{\op}{\operatorname}
\newcommand{\con}{\overline}
\newcommand{\bigo}{\mathcal{O}}
\newcommand{\biginf}{\mathcal{O}_{\infty}}
\newcommand{\D}{\mathcal{D}}
\newcommand{\symb}{\mathcal{S}}
\newcommand{\lag}{\mathcal{L}} 
\newcommand{\Hilb}{\mathcal{H}}
\newcommand{\wt}{\widetilde}
\newcommand{\pol}{\mathcal{P}}
\newcommand{\EE}{\bold{E}}
\newcommand{\Uu}{\rho}
\newcommand{\Vvv}{\tilde \rho}
\newcommand{\spec}{\op{sp}}
\newcommand{\PPP}{\widetilde{\Box}}
\newcommand{\PP}{\Box}
\newcommand{\bs}{\Lambda} 
\newcommand{\ac}{\mathfrak{a}}
\newcommand{\dom}{\mathcal{G}}
\newcommand{\dscal}{\Delta^{\op{scal}}}
\newcommand{\og}{\mathcal{G}}
\begin{document}

\title{On the spectrum of non degenerate magnetic Laplacian}
\author{L. Charles}

\maketitle

\begin{abstract} 
  We consider a compact Riemannian manifold with a Hermitian line bundle whose
  curvature is non degenerate. Under a general condition, the Laplacian acting
  on high tensor powers of the bundle exhibits gaps and clusters of
  eigenvalues. We prove that for each cluster, the number of eigenvalues that
  it contains, is given by a Riemann-Roch number. We also give a pointwise
  description of the Schwartz kernel of the spectral projectors onto the
  eigenstates of each cluster, similar to the Bergman kernel asymptotics of positive
  line bundles.  Another result is that  gaps and clusters
also appear in local Weyl laws.

\end{abstract}


\section{Introduction} 

Consider a Hermitian line bundle $L$ on a compact Riemannian manifold with a
connection $\nabla$ whose curvature is non degenerate. We will be concerned with the
eigenvalues and eigenstates of  Bochner Laplacians $\Delta_k =
\frac{1}{2} \nabla^*
\nabla + k V$ acting on positive tensor powers $L^k$ of the bundle, $V$ being a
real function, in the limit where $k$ tends to infinity.
Physically, $k^{-2} \Delta_k$ is a magnetic Schrödinger
operator with $k$  the inverse of the Planck's constant, $\nabla$ the
magnetic potential and $k^{-1}V$ the electric potential. 

A very particular case is the $\con \partial$-Laplacian of high powers of a positive line
bundle on a complex manifold. Its ground states are the holomorphic sections
which play obviously a central role in algebraic/complex geometry, but also in
mathematical physics: in K\"ahler quantization, the space of holomorphic
sections is the quantum space and the large $k$ limit is the semi-classical
limit. Starting from the paper \cite{GuUr88} by Guillemin and Uribe, it has
been understood that for a manifold not necessarily complex, the holomorphic
sections can be replaced by the bounded
states of the Bochner Laplacian $\Delta_k$ where the potential $V$ is suitably
defined, bounded here means that the eigenvalues are bounded independently of $k$. 
These ``almost'' holomorphic sections have been used with success in various
problems on symplectic manifolds from their projective embeddings to their
quantizations \cite{BoUr96}, \cite{BoUr00}, \cite{MaMa}. 

In the larger regime where we consider all the eigenvalues smaller than $k \bs$, $\bs$
being arbitrary large but independent of $k$, 
few results are known: a general Weyl law has been established by
Demailly \cite{De} that we will recall later, and for a specific class of connection
$\nabla$, Faure and Tsuji have shown that the spectrum of $\Delta_k$ exhibits some
gaps and clusters \cite{FaTs15}, the first cluster consisting of the bounded
states of \cite{GuUr88}.

A natural question is to determine the number of eigenvalues in each cluster.
For the first cluster, in the case of holomorphic sections of a positive line
bundle, the answer is provided by the Riemann-Roch-Hirzebruch theorem and the
Kodaira vanishing theorem. More generally, when $k$ is sufficiently large, the
number of bounded states of the Bochner Laplacian of \cite{GuUr88} is still
given by the Riemann-Roch number of $L^k$.
One of our main results is that the number of eigenvalues in each higher
cluster is given as well by a Riemann-Roch number, associated to $L^k$
tensored with a convenient auxiliary bundle $F$ defined in terms of the cluster. 

We are also concerned with results of local nature: we show that   gaps and clusters
 appear as well in the local Weyl laws of $\Delta_k$, local here means that each eigenvalue
is counted with a weight
given by the square of the pointwise norm of the corresponding eigensection. Furthermore
we give a pointwise description of the Schwartz kernel of the spectral projectors
associated to each cluster, generalising the Bergman kernel asymptotics for
positive line bundles.

The picture emerging from these results is that the restriction of the Bochner
Laplacian $\Delta_k$ to each cluster
is essentially a Berezin-Toeplitz operator with principal symbol an endomorphism of the
auxiliary bundle $F$.

\subsection{The magnetic Laplacian}  \label{sec:magnetic-laplacian-1}

Let us turn to precise statements. 
Let $M^{2n}$ be a closed manifold equipped with a Riemannian metric $g$, a
volume form $\mu$, a Hermitian line bundle $L$ with a connection compatible
with the metric,  a Hermitian
vector bundle $A$ over $M$ having an arbitrary rank $r$ with a
connection, and a section $V \in \Ci (M , \op{End} A)$ such that $V(x)$ is
Hermitian for any $x\in M$. Define the Laplacian
\begin{gather} \label{eq:delta_k}
  \Delta_k = \tfrac{1}{2} \nabla^* \nabla + k V : \Ci ( L^k \otimes A) \rightarrow \Ci
(  L^k \otimes A) .
\end{gather} 
Here $k \in \N$, $\nabla$ is the covariant derivative of $L^k \otimes A$,
$\nabla^*$ is its adjoint, the scalar products of sections of $L^k \otimes A$
or $ L^k \otimes A \otimes T^*M$ are defined by integrating
the pointwise scalar products against the volume form $\mu$. The metric of
$T^*M$ is induced by the Riemannian metric.

We have introduced the bundle $A$
with the endomorphism valued section $V$ to include some important 
Laplacians as the $\overline{\partial}$-Laplacian acting on $p$-forms or the
square of some Dirac operators. Furthermore our results holds for a slightly
more general class of operators than \eqref{eq:delta_k}, which are defined in
Section \ref{sec:class-magn-lapl} and are locally of the form
\eqref{eq:ass_op}. 

$\Delta_k$ being a formally self-adjoint elliptic operator on a compact
manifold, it is essentially
self-adjoint, its spectrum $\spec (
\Delta_k)$ is a discrete subset of $[ k \inf V_1 , + \infty ) $ and consists  only of
eigenvalues with finite multiplicities, the eigenfunctions are smooth sections
of $L^k \otimes A$. Here $V_1(x) $ is the lowest eigenvalue of $V(x)$. 

The curvature of $L$ has the form $\frac{1}{i} \om$ with $\om \in \Om^2 ( M ,
\R)$ a closed form. Let us assume that:
\begin{gather} \label{eq:hyp_non_degenere} \tag{A} 
 \text{ $\om$ is non degenerate at each point of $M$.} 
\end{gather}
Thus $\om $ is a symplectic form. Associated to $\om$ is the Liouville volume form
$\mu_{\op{L}} = \om^n /n!$. We will assume that $\mu = \mu_{\op{L}}$. This is
not a restrictive assumption because if we multiply $\mu$ by a positive
function $\rho$ and the metric of $A$ by $\rho^{-1}$ we do not change the
scalar products of $\Ci ( L^k \otimes A)$ and $\Om^1 ( L^k \otimes A)$.  Working with $\mu_{\op{L}}$ will
simplify several statements.

\subsection{Pointwise data}  \label{sec:pointwise-data}

We now introduce several pointwise datas, that will enter in our asymptotic
description of the spectrum of $\Delta_k$. 
 Denote by $j_B $ the section of $\op{End}(TM)$ such that $ \om ( \xi , \eta ) = g ( j_B
\xi , \eta )$. Then $M$ has an almost-complex structure $j$ compatible with
$\om$ defined by 
$$j_y  := |j_{B,y}|^{-1}
j_{B,y} , \qquad \forall \; y \in M. $$ 
So the vector bundle $T^{1,0} M = \op{Ker} ( j -
i \op{id}_{TM\otimes \C}) \subset TM \otimes \C$ has a Hermitian metric $h$ given by $h ( \xi, \eta) = \frac{1}{i} \om (
\xi, \con{\eta})$. 

Moreover, the complexification of $\frac{1}{i} j_{B,y}$ restricts to a positive
endomorphism of $(T^{1,0}_yM, h_y)$. Denote its eigenvalue by $0< B_1(y)
\leqslant \ldots \leqslant
B_n(y)$. Introduce an orthonormal basis $(u_i)$ of $(T^{1,0}_yM, h_y) $ such that 
$j_{B,y} u_i = i B_i (y) u_i$.

Consider the space $\D (T_yM) = \C [T^{0,1}_yM]$ of antiholomorphic
polynomials of $T_yM$. If $(z_i)$ are the linear complex coordinates of $T_yM$
dual to the  $u_i$'s, then $\D (T_yM) = \C [ \con{z}_1, \ldots, \con{z}_n]$.
Define the endomorphism 
\begin{gather} \label{eq:defintro_PP_y}
\PP_y = \textstyle{\sum}_i B_i(y) ( \ac_i ^{\dagger} \ac_i + \tfrac{1}{2} ) + V(y) : \D
(T_yM ) \otimes A_y \rightarrow \D(T_yM) \otimes A_y 
\end{gather}
where $\ac_i$ and $\ac_i^{\dagger}$ are the endomorphisms of $\D (T_yM)$
acting by derivation with respect to $\con{z}_i$ and multiplication by
$\con{z}_i$ respectively. 

Introduce an eigenbasis $(\zeta_j)$ of $V(y)$: $V(y) \zeta_i = V_i(y) \zeta_i$
with $V_1(y) \leqslant \ldots \leqslant V_r(y)$.
Then $\PP_y$ is diagonalisable, with eigenbasis $(\con{z}^\al \otimes
\zeta_j$, $(\al, j) \in \N^n \times \{ 1, \ldots, r\})$
$$ \PP_y ( \con{z}^{\al} \otimes \zeta_j) = \bigl( \textstyle{\sum}_i B_i ( y) ( \al (i) +
\tfrac{1}{2}) + V_j(y) \bigr) \;  \con{z}^\al \otimes
\zeta_j .$$  
Let $\la_1(y) \leqslant \la_2(y) \leqslant \ldots $ be the eigenvalues of
$\PP_y$ ordered and repeated according to their multiplicities. 

The operators $\PP_y$ depend smoothly on $y$ even if it is not obvious
from \eqref{eq:defintro_PP_y}, because in general there is no local smooth frame $(u_i)$ of
$T^{1,0}M$ which is an eigenbasis of $j_{B,y}$ at each $y$. The various
eigenvalues $B_i(y)$, $V_j(y)$ and $\la_\ell (y)$ depend continuously on $y$.

\subsection{Weyl laws}

In \cite{De}, Demailly proved a Weyl law for the operators $k^{-1} \Delta_k$. It
says roughly that in the semiclassical limit $k \rightarrow \infty$, the spectrum of $k^{-1}
\Delta_k$ is an aggregate of the spectrums of the $\PP_y$'s. More precisely,
introduce the counting functions $N_y ( \la) = \sharp \{ \ell ; \; \la_\ell(y)
\leqslant \la \}$ of $\PP_y$  and the one  of $k^{-1} \Delta_k$ 
$$ N(\la, k ) = \sharp \spec ( k^{-1} \Delta_k) \cap ] -\infty , \la ].$$
Here and in the sequel, an eigenvalue with multiplicity $m$ is counted $m$
times. Let $v : \R \rightarrow \R$ be the non decreasing function $v(\la) := \int_M N_y ( \la) \; d\mu_{\op{L}}
(y)$. Let $D $ be the
set of discontinuity points of $v$. Then for any $\la
\in \R \setminus D$, we have
\begin{gather} \label{eq:weyl_law_Demailly}
N(\la, k ) = \Bigl( \frac{k}{2\pi} \Bigr)^n v( \la)   + \op{o} (k^{n})
\end{gather}
as $k$ tends to infinity.
We have slightly reformulated the original result \cite[Theorem 0.6]{De}, which holds more
generally for $\om$ not necessarily non degenerate and $M$ not necessarily compact. 

The subset $D$ is in general non empty.
As an easy example, if $j_B = j$ and
$V =0$, then $D = \frac{n}{2} + \N$, $v $ is locally constant on $\R \setminus
D$ and for any $\ell \in \N$, $v(\frac{n}{2} + \ell + 0 ) = v ( \frac{n}{2} +
\ell - 0)+  r \mu_{\op{L}}(M) { n+ \ell -1 \choose \ell -1 }$. 

Our goal is to understand the corrections to the Weyl law \eqref{eq:weyl_law_Demailly}, in other
words what is
hidden in the remainder $\op{o}(k^n)$. For instance, if the function $v$ is constant on a compact interval $J$, then
\eqref{eq:weyl_law_Demailly} implies that $\sharp \spec (
k^{-1} \Delta_k) \cap J = \op{o} ( k^n)$. Actually, as we will see, in this
situation, when $k$ is sufficiently large, $J$ contains no eigenvalue of
$k^{-1} \Delta_k$. Furthermore, the numbers of eigenvalues between such intervals is
given by Riemann-Roch numbers.

To state our result, introduce the set $\Si = \bigcup_j \la_j (M)$. Since $\la_j \geqslant cj -C$
for some positive constants $c$ and $C$, $\Si$ is a locally finite union of
closed disjoint intervals. The function $v$ is locally  constant on $\R
\setminus \Si$.

If $B$ is complex vector bundle of $M$, we denote by $\op{RR} (B)$ the Riemann-Roch
number of $B$, that is the integral of the product of the Chern character of
$B$ by the Todd form of $(M, j)$. 

\begin{theo} \label{theo:intro_weyl_global}
  Let $a, b \in \R \setminus \Si$ with $a < b$. Then when $k$ is
  sufficiently large,
  \begin{gather} \label{eq:counting}
    \sharp \spec ( k^{-1} \Delta_k) \cap [a,b] = \begin{cases} \op{RR} (L^k
    \otimes F )  \text{ if } [a,b] \cap \Si \neq \emptyset \\
    0 \text{ otherwise } 
  \end{cases} 
\end{gather}
where $F$ is the vector bundle with fibers $F_y = \op{Im} 1_{[a,b]} (
  \PP_y)$, $y \in M$.
\end{theo}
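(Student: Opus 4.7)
Since $\Si$ is a locally finite union of closed disjoint intervals and $a,b \notin \Si$, there is $\eps > 0$ so that no eigenvalue $\la_j(y)$ of $\PP_y$ lies in $[a-\eps,a) \cup (b, b+\eps]$ for any $y \in M$. Writing
\begin{gather*}
1_{[a,b]}(\PP_y) = \frac{1}{2\pi i} \oint_{\ga} (z - \PP_y)^{-1}\,dz
\end{gather*}
for a fixed contour $\ga$ enclosing exactly the eigenvalues of $\PP_y$ in $[a,b]$, this projector depends smoothly on $y$, making $F$ a smooth complex subbundle of $\D(TM) \otimes A$ with locally constant rank $\sharp\{j : \la_j(y) \in [a,b]\}$. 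The next task is to prove that for $k$ large, $\spec(k^{-1} \Delta_k)$ misses a uniform neighborhood of $\{a,b\}$; this immediately yields the second line of \eqref{eq:counting}. I would construct a semiclassical parametrix for the resolvent: around each $y_0 \in M$, after a Bargmann-type rescaling by $k^{1/2}$, the operator $k^{-1} \Delta_k$ becomes a perturbation of $\PP_{y_0}$ of order $k^{-1/2}$. The pointwise resolvents $(z - \PP_{y_0})^{-1}$ exist uniformly in $y_0$ for $z$ near $a$ or $b$ by Step 1, and gluing them via a partition of unity yields a global approximate resolvent with $\bigo(k^{-1/2})$ error, inverted by Neumann series.

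\textbf{Rank as a Riemann--Roch number.} Given the gap, $\Pi_k = 1_{[a,b]}(k^{-1} \Delta_k)$ is a smoothing finite-rank projector and the left-hand side of \eqref{eq:counting} equals $\op{rank} \Pi_k$. The strategy is to identify $\op{Im} \Pi_k$ with a Berezin--Toeplitz quantization of $L^k \otimes F$, as anticipated in the introduction. For each $y \in M$ and $f \in F_y$, the inverse Bargmann transform produces a coherent state $e_{y,f,k} \in \Ci(L^k \otimes A)$ concentrated at $y$ at scale $k^{-1/2}$; these assemble into a quantization map $T_k : \Ci(M, L^k \otimes F) \to \op{Im} \Pi_k$. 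A Toeplitz-type argument then shows that $T_k$ induces an isomorphism $\op{Ker} D_k^F \xrightarrow{\sim} \op{Im} \Pi_k$ modulo $\bigo(k^{-\infty})$, where $D_k^F$ is the spin-$c$ Dirac operator with coefficients in $L^k \otimes F$. For $k$ large, the higher eigenspaces of $D_k^F$ are separated from $0$ by a gap of order $\sqrt{k}$, so $\dim \op{Ker} D_k^F = \op{ind}(D_k^F) = \op{RR}(L^k \otimes F)$ by Atiyah--Singer. Since both sides are integers differing by $\bigo(k^{-\infty})$, they coincide exactly for $k$ large.

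\textbf{Main obstacle.} The genuinely hard part is the construction of $T_k$ and the underlying pointwise description of the Schwartz kernel of $\Pi_k$, analogous to the Bergman kernel asymptotics for positive line bundles. Because $j$ need not be integrable and a smooth eigenframe of $j_B$ may fail to exist globally, the construction must be intrinsic, going through the spectral projector $1_{[a,b]}(\PP_y)$ rather than through individual eigenvectors of $\PP_y$. Once this microlocal machinery is in place, the leading-order match between $\op{rank} \Pi_k$ and $\op{RR}(L^k \otimes F)$ is already guaranteed by Demailly's Weyl law \eqref{eq:weyl_law_Demailly} applied to both the intervals adjacent to $\Si \cap [a,b]$, so only the fine identification with $\op{Ker} D_k^F$ remains to produce the exact equality.
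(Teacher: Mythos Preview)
Your outline is correct and follows essentially the same strategy as the paper: the gap is obtained from a glued approximate resolvent (the paper's Section~\ref{sec:spectral-estimates} and Corollary~\ref{cor:spectrum}), and the rank of $\Pi_k$ is computed by comparing $\op{Im}\Pi_k$ with the kernel of a spin-$c$ Dirac operator on $L^k\otimes F$ via an almost-unitary intertwiner, then invoking Atiyah--Singer (the paper's Theorem~\ref{theo:glob-spectr-estim}). The paper implements your ``quantization map $T_k$'' and the ``microlocal machinery'' for $\Pi_k$ through the operator algebra $\lag(A)$ of Section~\ref{sec:operator-class-lag}: it first proves $\Pi_k\in\lag^+(A)$ with symbol $1_{[a,b]}(\PP)$ (Theorem~\ref{theo:main_result_first}), and then the intertwiner is a generalized ladder operator $(U_k)\in\lag(A,A')$ with symbol a bundle isomorphism $F\to F'$, satisfying $U_k^*U_k=\Pi_k+\bigo(k^{-1/2})$ and $U_kU_k^*=\Pi_k'+\bigo(k^{-1/2})$, which is exactly your coherent-state map made precise.
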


$\op{RR} ( L^k \otimes F)$ depends polynomialy on $k$, with leading
term
$$ \op{RR} ( L^k \otimes F)  = (\op{rank} F ) \Bigl(\frac{k}{2 \pi } \Bigr)^n
\mu_{\op{L}} (M) + \bigo ( k^{n-1}). $$
The result is consistent with the Weyl law \eqref{eq:weyl_law_Demailly}
because when $a,b \in \R \setminus \Si$, $N_y ( b ) = N_y (a) + \op{rank} F$ for any $y \in M$.    

Theorem \ref{theo:intro_weyl_global} holds not only for the magnetic
Laplacian \eqref{eq:delta_k}, but also for other remarkable geometric operators, as for instance
the holomorphic Laplacian or the square of spin-c Dirac operators. The
corresponding results are
stated in Theorem \ref{theo:holomorphic_Laplacian} and Theorem
\ref{theo:semicl-dirac-oper}. In these cases, $\Si=\N$, so the spectrum of
$k^{-1} \Delta_k$ consists of clusters at non-negative integers, the dimension
of each cluster being given by the  Riemann-Roch number $\op{RR} ( L^k \otimes F)$
where $F$ is a sum of tensor products of symmetric and exterior powers of
$T^{1,0} M$, cf. part 3 of Theorem \ref{theo:semicl-dirac-oper}.

Theorem \ref{theo:intro_weyl_global} is relevant only when $\Si$ has
several components. This happens when $j = j_B$ and $V=0$ as explained
above. Observe as well that the sets of  $(\om, g, V)$ such that $\Si$ is non
connected, is open in $\mathcal{C}^0$-topology.

$\Si$ being the support of the Lebesgue-Stieltjes measure $d v$, the Weyl law
\eqref{eq:weyl_law_Demailly} implies that for any $\la \in \Si$, the distance $d ( \la,
\spec ( k^{-1} \Delta_k))$ tends to $0$ as $k \rightarrow \infty$. To the
contrary, if $\la \notin \Si$,  by the second case of \eqref{eq:counting},
there exists $\ep >0 $ such that $d ( \la , \spec ( k^{-1} \Delta_k) )
\geqslant \ep$ when $k$ in sufficiently large.

The following theorem gives more precise estimates.
\begin{theo}  \label{theo:intro_spec}
For any $\bs >0$, there exists $C>0$ such that for any $\la \leqslant \bs $, 
  \begin{gather} \label{eq:spectr_approx_1}
  \la \in \Si \Rightarrow  \op{dist} ( \la,  \spec (k^{-1} \Delta_k ) )
  \leqslant C k^{-\frac{1}{2}} , \\  \label{eq:spectr_approx_2}
  \la \in  \spec ( k^{-1} \Delta_k) \Rightarrow  \op{dist} ( \la , \Si ) \leqslant C k^{- \frac{1}{2}}.
\end{gather}
\end{theo}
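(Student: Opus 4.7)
The proof naturally splits into the two inclusions. For \eqref{eq:spectr_approx_1}, the plan is to construct quasi-modes concentrated at a single point. Fix $\la \in \Si$ with $\la \leqslant \bs$; by compactness there exist $y_0 \in M$ and $j$ with $\la_j(y_0) = \la$, and a unit eigenvector $\psi \in \D(T_{y_0}M) \otimes A_{y_0}$ of $\PP_{y_0}$. Working in normal coordinates $y$ around $y_0$ with a unitary frame of $L$ synchronous along radial geodesics (and an arbitrary local unitary frame of $A$), the rescaled operator at scale $y = x / \sqrt k$ matches $\PP_{y_0}$ up to a perturbation of size $\bigo(k^{-1/2})$ on Gaussian wave packets. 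Setting
\[ u_k(y) = c_k\, \chi(y)\, \psi(\sqrt k\, y)\, e^{-k |y|^2/4}, \]
with $\chi$ a $k$-independent cutoff and $c_k$ normalising $u_k$ in $L^2$, the Gaussian decay controls the cutoff error by $\bigo(k^{-\infty})$, and one obtains $\| (k^{-1}\Delta_k - \la) u_k \|_{L^2} \leqslant C k^{-1/2}$. Uniformity in $\la \in \Si \cap (-\infty, \bs]$ follows from compactness of $M$ together with the continuous dependence of the family $\PP_y$ on $y$, and the spectral theorem then produces an eigenvalue of $k^{-1}\Delta_k$ within distance $C k^{-1/2}$ of $\la$.

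For the reverse inclusion \eqref{eq:spectr_approx_2}, the plan is a functional calculus argument. Since the $\la_\ell(\cdot)$ are continuous on the compact manifold $M$ and $\la_\ell \geqslant c\ell - C$, one can choose $\chi \in \Ci(\R, [0,+\infty))$ supported in $(-\infty, \bs + 1]$, vanishing on the set $\{\la_\ell(y) : y\in M,\; \la_\ell(y) \leqslant \bs +1\}$, and satisfying $\chi(\mu) \geqslant \op{dist}(\mu, \Si)^2$ for every $\mu \leqslant \bs$. Then $\chi(\PP_y)\, 1_{(-\infty,\bs]}(\PP_y) = 0$ for every $y \in M$. By the semiclassical operator calculus implicit in the framework of Theorem \ref{theo:intro_weyl_global}, the operator $\chi(k^{-1}\Delta_k) \cdot 1_{(-\infty,\bs]}(k^{-1}\Delta_k)$ has operator-valued principal symbol $y \mapsto \chi(\PP_y) 1_{(-\infty, \bs]}(\PP_y) = 0$, and hence operator norm of order $\bigo(k^{-1})$. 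Evaluated on a unit eigensection $u_k$ of $k^{-1}\Delta_k$ with eigenvalue $\la \leqslant \bs$,
\[ \op{dist}(\la, \Si)^2 \leqslant \chi(\la) = \| \chi(k^{-1}\Delta_k) u_k \|_{L^2} \leqslant C k^{-1}, \]
which gives \eqref{eq:spectr_approx_2}.

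The main technical obstacle is the norm bound $\|\chi(k^{-1}\Delta_k)\, 1_{(-\infty,\bs]}(k^{-1}\Delta_k)\| = \bigo(k^{-1})$. One needs a semiclassical functional calculus for the family $(k^{-1}\Delta_k)_k$ in which the principal symbol is the operator-valued map $y \mapsto \PP_y$ on the infinite-dimensional Fock-type fibre $\D(T_yM) \otimes A_y$, and in which a symbol vanishing on the low-energy spectral subspace automatically produces a remainder of order $k^{-1}$. The infinite dimensionality of the fibres is tamed by noting that $1_{(-\infty,\bs]}(\PP_y)$ has bounded rank and determines a finite-rank sub-bundle (essentially the bundle $F$ of Theorem \ref{theo:intro_weyl_global}), reducing the symbolic estimate to a finite-rank one. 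The natural route to the $\bigo(k^{-1})$ remainder is then a Helffer-Sjöstrand almost-analytic extension representing $\chi(k^{-1}\Delta_k)$ as a contour integral of resolvents, combined with estimates $\|(k^{-1}\Delta_k - z)^{-1}\| = \bigo(\op{dist}(z,\Si)^{-1})$ for $z$ at distance of order $1$ from $\Si$, obtained from the local model $\PP_y$ via partition of unity and parametrix arguments.
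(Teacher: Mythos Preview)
Your argument for \eqref{eq:spectr_approx_1} is correct and is exactly the paper's approach: peaked sections built from eigenvectors of $\PP_{y_0}$ give quasi-modes with error $\bigo(k^{-1/2})$ (Proposition~\ref{prop:peaked-sections} and Theorem~\ref{theo:spectr2}).

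Your argument for \eqref{eq:spectr_approx_2} has two genuine gaps.

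\emph{First, the test function $\chi$ need not exist.} You require $\chi\in\Ci(\R,[0,\infty))$ vanishing on $\Si\cap(-\infty,\bs+1]$ and satisfying $\chi(\mu)\geqslant\op{dist}(\mu,\Si)^2$ for $\mu\leqslant\bs$. But $\Si=\bigcup_j\la_j(M)$ is in general a finite union of nondegenerate closed intervals. If $[a,b]\subset\Si$ with $a<b$, then a smooth nonnegative $\chi$ vanishing on $[a,b]$ is flat at $b$, so $\chi(b+\ep)=o(\ep^N)$ for every $N$, contradicting $\chi(b+\ep)\geqslant\ep^2$. Your construction works only when $\Si$ is discrete (e.g.\ the K\"ahler case $j_B=j$, $V=0$), not in the generality of the theorem.

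\emph{Second, even when $\chi$ exists, the norm bound is one half-power too weak.} In the operator algebra $\lag(A)$ that governs this problem, the asymptotic expansion is in powers of $k^{-1/2}$, not $k^{-1}$: vanishing of the principal symbol $\si_0$ gives only $\|P_k\|=\bigo(k^{-1/2})$ (see the discussion after \eqref{eq:beginsplit-p_k-x+}). Thus $\chi(\PP_y)1_{(-\infty,\bs]}(\PP_y)=0$ yields $\|\chi(k^{-1}\Delta_k)1_{(-\infty,\bs]}(k^{-1}\Delta_k)\|=\bigo(k^{-1/2})$, hence $\op{dist}(\la,\Si)^2\leqslant Ck^{-1/2}$, i.e.\ only the weak estimate $\op{dist}(\la,\Si)\leqslant Ck^{-1/4}$. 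Promoting this to $\bigo(k^{-1})$ would require the subprincipal symbol to vanish as well, which is not automatic; the parity argument of Remark~\ref{rem:improvment} that sometimes kills the $k^{-1/2}$ term needs the auxiliary bundle $F$ to have definite parity in $\D(TM)$, a hypothesis not available here.

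The paper proceeds differently. It first proves the weak $k^{-1/4}$ version of \eqref{eq:spectr_approx_2} by gluing the exact resolvents of the frozen models $\Delta_{y,k}$ over a covering of $M$ (Corollary~\ref{cor:spectrum}); this is close in spirit to the resolvent parametrix you sketch at the end. That weak estimate is then used to prove that the spectral projector $1_{[a,b]}(k^{-1}\Delta_k)$ belongs to $\lag(A)$ (Theorem~\ref{theo:main_result_first}). The sharp $k^{-1/2}$ bound finally comes from a G{\aa}rding-type inequality (Theorem~\ref{theo:glob-spectr-estim}): one conjugates the Toeplitz operator $k^{-1}\Delta_k\,\Pi_k^{[a,b]}$ by generalised ladder operators to a multiplication operator $f$ up to $\bigo(k^{-1/2})$, and the range of $f$ is exactly $\Si\cap[a,b]$. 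The point is that positivity (comparison with a multiplication operator), not mere symbol vanishing, is what produces the sharp half-power; and working component by component of $\Si$ replaces your global test function $\chi$, which cannot exist.
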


Interestingly, some local Weyl laws hold with a similar gaped
structure. Instead of $\Si$, the local law at $y\in M$  involves the spectrum $\Si_y  = \{ \la_i ( y) ; \; i \in
\N\}$ of $\PP_y$, which is a discrete subset of $\R$. Clearly, $\Si = \bigcup_y
\Si_y$.

For any $k \in \N$, choose an orthonormal eigenbasis $(\Psi_{k,i})_{i \in \N}$ of
$k^{-1} \Delta_k$ such that $k^{-1} \Delta_k \Psi_{k,i} = \la_{k,i}
\Psi_{k,i}$ with $\la_{0,k} \leqslant \la_{1,k} \leqslant \ldots$. 
For any $y\in M$ and real numbers $a <b$, define
$$ N(y,a,b,k) = \sum _{i, \; \la_{k,i}  \in [a,b] } | \Psi_{k,i} ( y) |^2, $$
so we count the eigenvalues in $[a,b]$ with weights given by the square of the
pointwise norm at $y$ of the corresponding eigenvectors. 

\begin{theo}  \label{theo:intro_weyl_local}
  For any $\bs \in \R \setminus \Si$, $y \in M$ and $ a, b \in ]-\infty,
  \bs] \setminus \Si_y$ such that $a<b$, the following holds: if $[a,b] \cap \Si_y$ is empty,
  then $N(y,a,b, k) = \bigo ( k^{-\infty})$. Otherwise we have an asymptotic expansion
\begin{gather}  \label{eq:local_weyl_law}
  N(y,a,b,k) = \Bigl( \frac{k}{2 \pi } \Bigr)^n \sum_{\la \in \Si_y \cap
  [a,b] } \sum_{\ell =0 }^{\infty} m_{\ell, \la} k^{-\ell} + \bigo (
k^{-\infty}), 
\end{gather}
where the coefficients $m_{\ell, \la}$ do not depend on $a$, $b$, $k$. In particular,
$m_{0, \la}$ is the multiplicity of the eigenvalue $\la$ of $\PP_y$. 
\end{theo}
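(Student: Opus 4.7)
The starting point is to realise $N(y,a,b,k)$ as the pointwise trace of the Schwartz kernel of the spectral projector $\Pi_{k,[a,b]}$ of $k^{-1}\De_k$ on $[a,b]$:
\begin{gather*}
N(y,a,b,k) = \op{tr}_{L^k_y \otimes A_y} \Pi_{k,[a,b]}(y,y) .
\end{gather*}
Once this is written, the task reduces to a pointwise asymptotic analysis of $\Pi_{k,[a,b]}(y,y)$, which I expect to perform cluster by cluster using the Bergman-type pointwise description of cluster projectors announced in the introduction.

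The first step is spectral localisation. Since $a,b \notin \Si_y$ and $b \leqslant \bs \notin \Si$, I would shrink the analysis to disjoint small closed intervals $J_1, \ldots, J_N$, each containing exactly one connected component of $\Si \cap [a,b]$, chosen small enough that $J_j \cap \Si_y$ is either empty or reduced to a single point of $\Si_y \cap [a,b]$. By Theorems \ref{theo:intro_weyl_global} and \ref{theo:intro_spec}, for $k$ large the spectrum of $k^{-1}\De_k$ in a neighbourhood of $[a,b]$ is contained in $J_1 \cup \cdots \cup J_N$, so that
\begin{gather*}
\Pi_{k,[a,b]} = \sum_{j=1}^N \Pi_{k,J_j}
\end{gather*}
for $k$ large; the hypothesis $a,b \notin \Si_y$ together with $b \leqslant \bs \notin \Si$ guarantees that no cluster straddles the endpoints.

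The second step is to apply, for each $j$, the pointwise asymptotic description of the cluster projector at the diagonal point $(y,y)$:
\begin{gather*}
\op{tr}_{L^k_y \otimes A_y} \Pi_{k,J_j}(y,y) \sim \Bigl( \tfrac{k}{2\pi} \Bigr)^n \sum_{\ell \geqslant 0} c_{\ell,j}(y) k^{-\ell} ,
\end{gather*}
whose coefficients depend only on the infinite jet at $y$ of the geometric data $(\om, g, V)$ and on $J_j$, and whose leading coefficient is $c_{0,j}(y) = \op{tr}_{A_y} 1_{J_j}(\PP_y)$, namely the combined multiplicity of the eigenvalues of $\PP_y$ in $J_j$. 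When $J_j$ contains a unique $\la \in \Si_y$ this directly produces the contribution of $\la$ to \eqref{eq:local_weyl_law}, with $m_{0,\la}$ equal to the multiplicity of $\la$ in $\PP_y$.

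The main obstacle is to improve ``$c_{0,j}(y) = 0$'' to ``$\op{tr}_{L^k_y \otimes A_y} \Pi_{k,J_j}(y,y) = \bigo(k^{-\infty})$'' in the case $J_j \cap \Si_y = \emptyset$. I would do this by regularising $1_{J_j}$: pick $\chi \in \Ci(\R)$ with $\chi \equiv 1$ on $J_j$ and $\op{supp}\chi$ disjoint from $\Si_y$; by the gap between $\Si_y$ and the support of $\chi - 1_{J_j}$ together with the cluster decomposition above, $\chi(k^{-1}\De_k)$ and $\Pi_{k,J_j}$ differ by $\bigo(k^{-\infty})$ on the diagonal at $y$, so it suffices to analyse $\chi(k^{-1}\De_k)(y,y)$. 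A standard functional calculus argument (Helffer--Sj\"ostrand formula or a semiclassical wave parametrix) combined with the pointwise model $\PP_y$ shows that every coefficient of the asymptotic expansion of $\chi(k^{-1}\De_k)(y,y)$ is built from $\chi$ and the jet of $\PP$ at $y$; since $\chi$ vanishes on a neighbourhood of $\Si_y = \spec(\PP_y)$, all these coefficients vanish and the expected $\bigo(k^{-\infty})$ bound follows. This microlocal argument is exactly what upgrades Demailly's global Weyl law \eqref{eq:weyl_law_Demailly} to the sharp pointwise statement \eqref{eq:local_weyl_law}, and I expect it to be the technical heart of the proof.
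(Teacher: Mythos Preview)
Your decomposition into intervals $J_j$ ``each containing exactly one connected component of $\Si \cap [a,b]$'' and at the same time ``at most one point of $\Si_y$'' need not exist: a single connected component of $\Si$ can contain several points of $\Si_y$ (this happens whenever the eigenvalue functions $y' \mapsto \la_j(y')$ vary enough over $M$ to merge two distinct eigenvalues of $\PP_y$ into one interval of $\Si$). To isolate an individual $\la \in \Si_y$ you are then forced to cut at a point $c$ lying \emph{inside} $\Si$ (though outside $\Si_y$), and Theorem~\ref{theo:intro_proj} no longer applies to $\Pi_{k,[a,c]}$. The paper sidesteps this by never decomposing along $\Si$-components: it works once and for all with the single projector $\Pi_k = 1_{]-\infty,\bs]}(k^{-1}\De_k)$ and the Toeplitz operator $P_k = k^{-1}\De_k\,\Pi_k$, and invokes Theorem~\ref{theo:local-spectr-estim} part~1, which gives $(\Pi_k\,1_I(P_k)\,\Pi_k)(y,y) = \bigo(k^{-\infty})$ for \emph{any} compact $I$ disjoint from $\Si_y$, with no restriction on the position of the endpoints of $I$ relative to $\Si$.

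Your proposed mechanism for the $\bigo(k^{-\infty})$ bound is also not justified as written. Knowing that $\chi(k^{-1}\De_k) \in \lag(A)$ with principal symbol $\chi(\PP_y) = 0$ gives only the vanishing of the leading coefficient; the assertion that \emph{all} higher coefficients at $y$ are determined by the jet of $\chi$ along $\Si_y$ is true but is not a ``standard'' consequence of Helffer--Sj\"ostrand --- it requires the specific structural fact (established in the paper's proof of Theorem~\ref{theo:local-spectr-estim} part~2) that the symbol coefficients of the approximate resolvent are rational in $z$ with poles exactly on $\op{sp}(f(y))$. The paper's argument for part~1 avoids this altogether: it builds, for each $\la \in I$, a local left parametrix $Q_k(\la)$ of $P_k - \la$ near $y$ (possible since $f(y) - \la$ is invertible on $F_y$), deduces that every normalised eigenvector $\Psi$ of $P_k$ with eigenvalue in $I$ satisfies $|\Psi(y)| = \bigo(k^{-\infty})$, and sums over the orthonormal eigenbasis. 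Once this pointwise $\bigo(k^{-\infty})$ is in hand, the non-empty case is handled exactly as you outline: bracket $\la$ by a smooth cutoff $g$ and read off the expansion from $g(k^{-1}\De_k) \in \lag^+(A)$.
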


We believe that the same result holds without the assumption that $a,b $ are
smaller than $\bs \in \R \setminus \Si$. Observe that the first order term  $\sum_{\la \in [a,b]}  m_{0, \la} $
in \eqref{eq:local_weyl_law} is merely the number of eigenvalues of $\PP_y$
in $[a,b]$. In particular we recover the same structure as in the counting law
\eqref{eq:counting} of Theorem \ref{theo:intro_spec}: when the leading order
term is zero, then $N(y,a,b, k) = \bigo ( k^{-\infty})$. We interpret this as
a gap in the local Weyl law. 

Besides these gaps and clusters, another notable aspect in Theorem
\ref{theo:intro_spec} and \ref{theo:intro_weyl_local} is that we have full
asymptotic expansions. For the Laplace-Beltrami operator or the Schrödinger
operator without magnetic field, the remainders in Weyl laws have a completely
different behavior, cf. for instance the survey \cite[Section 8]{Ze_survey}.
Another situation where clusters and gaps occur is for the pseudo-differential operators
whose principal symbol has a periodic Hamiltonian flow. This has been studied
in many papers, see for instance \cite{Do} and references therein for a semi-classical result and \cite{Boutet_bourbaki},
\cite[Section 1]{BoGu} for earlier results, with Riemann-Roch numbers
already. For our magnetic Laplacian,
the gaps are also connected to periodic Hamiltonians: the quantum harmonic
oscillators $\ac_i ^{\dagger} \ac_i$ of \eqref{eq:defintro_PP_y}. In dimension 2,
this lies at the origin of the cyclotron motion or resonance of a charged particle in a magnetic field.

\subsection{Schwartz kernels of spectral projectors} \label{sec:schw-kern-spectr}

Another result we would like to emphasize in this introduction is the asymptotic description of the Schwartz
kernel of $g(k^{-1} \Delta_k)$ where $g: \R \rightarrow \C$ is a bounded
function with compact support satisfying some  assumptions.  These Schwartz kernels are by definition given at $(x,y) \in M^2$ by
$$ g ( k^{-1} \Delta_k ) (x,y) = \sum_i g ( \la_{k,i} ) \Psi_{k,i} ( x) \otimes
\con {\Psi_{k,i} (y)} \in L^k_x \otimes A_x \otimes \con{L}^k_y \otimes
\con{A}_y   .$$
We will prove that $g(k^{-1} \Delta_k)$ belongs to the operator algebra $\lag ( A)$
introduced in \cite{oimpart1}. Let us recall the main characteristics of $\lag ( A)$, the complete definition will be given in Section \ref{sec:operator-class-lag}.

$\lag
(A)$ consists of families $(P_k)_{k \in \N}$ such that for any $k$,  $P_k$ is
an endomorphism of $\Ci (
M , L^k \otimes A)$ having a smooth Schwartz kernel in $\Ci ( M^2, (L^k
\otimes A ) \boxtimes (\con{L}^k \otimes \con{A}))$ satisfying the following
conditions. First,   for any compact subset $K$ of $M^2$ not intersecting the diagonal,
    for any $N$,  $P_k
    (x,y) = \bigo ( k^{-N})$ uniformly on $K$. Second, for any open set $U$ of
    $M$ identified with a convex open set of $\R^{2n}$ through a
    diffeomorphism, let $F \in \Ci (U^2,  L\boxtimes \con{L})$
    be the unitary frame such that $F(x,y) = u \otimes \con{v}$, where $v$ is
    any vector in $L_y$ with norm $1$ and $u \in L_x$ is the parallel
    transport of $v$ along the path $t \in [0,1] \rightarrow y + t (x-y)$.
    Introduce a unitary trivialisation of $A$ on $U$ and identify accordingly
    the sections of
    $A \boxtimes \con{A}$ over $U^2$ with the functions of $\Ci ( U^2, \C^r \otimes
    \con{\C}^r)$. Then the Schwartz kernel of $P_k$ has the following asymptotic
    expansion on $U^2$: for any $N \in \N$, for any $x \in U$ and $\xi \in T_xU$ such that $x+ \xi \in U$, 
\begin{xalignat}{2} \label{eq:beginsplit-p_k-x+}
\begin{split}     P_k (x+ \xi, x) = &\Bigl( \frac{k}{2\pi} \Bigr)^n
F^{k}(x+\xi,x)   e^{-\frac{k}{4} |\xi|_x^2} \sum_{\ell =0 }^N k^{-\ell}
a_{\ell} (x, k^{\frac{1}{2}} \xi ) \\  & + \bigo (
k^{n-\frac{N+1}{2}}) 
\end{split}
\end{xalignat}
where $|\xi|_x^2 = \om_x ( \xi, j_x \xi)$, the coefficients $a_{\ell} (x, \cdot)$ are polynomials map $T_xM
\rightarrow \C^r \otimes \con{\C}^r$
depending smoothly on $x$, and the $\bigo $ is uniform when $(x+ \xi, x)$ runs
over any compact set of $U^2$. 

Such an operator $P = (P_k)$ has a symbol $\si_0(P)$ which at $y \in M$ is
the endomorphism of $\mathcal{D} (T_yM) \otimes A_y$ defined by 
$$ \bigl( \si_0 ( P) (y) \bigr) ( f ) (u)  = (2 \pi)^{-n} \int_{T_y M} e^{
  (u -v) \cdot \con{v} } a_0 ( y, u-v) f (v) \; d \mu_y (v) $$  
Here, the scalar product $ u \cdot \con{v}$ and the measure $ \mu_y$ are
defined in terms of linear complex coordinates $z_i : T_yM \rightarrow \C$
associated to an orthonormal frame of $(T_y^{1,0}M, h_y)$ by
 $ u \cdot \con{v} = \sum z_i (u) \con{z_i(v)}$ and $\mu_y = | dz_1
 \ldots dz_n d\con{z}_1 \ldots d\con{z}_n |$.

As a result, for any $(P_k) \in \lag (A)$, $\| P_k \| = \bigo (1)$ and 
$$ \|
P_k \| = \bigo (k^{-\frac{1}{2}}) \Leftrightarrow (\si_0 (P)(y)  = 0 ,\; 
\forall y \in M) \Leftrightarrow (a_0 ( y, \cdot) = 0, \; \forall y \in M) .$$ 
Furthermore $\lag(A)$ is closed by product and the map $\si_0$ is an algebra
morphism. Here the product of the symbols at $y$ is the composition of
endomorphisms of $\D (T_yM) \otimes A_y$, which is not commutative. 

\begin{theo} \label{theo:intro_proj} 
\begin{enumerate} 
\item 
For any $a, b \in \R \setminus \Si$, the spectral projector
  $\Pi_k:= 1_{[a.b]} ( k^{-1} \Delta_k)$   and $k^{-1} \Delta_k \Pi_k$ belong
  to $\lag (A)$ and their symbols at $y$ are equal to $1_{[a,b]} (\PP_y)$ and $\PP_y
  1_{[a,b]} ( \PP_y) $ respectively. 
\item For any $\bs \in  \R \setminus
\Si$, for any $g \in \Ci ( \R, \C)$ such that $\op{supp} g \subset ]-\infty,
\bs]$, $ (g ( k^{-1} \Delta_k) )_k $ belongs to $\lag (A)$ and its symbol at $y$ is $g(
\PP_y)$.  
\end{enumerate}
\end{theo}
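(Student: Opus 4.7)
My plan is to prove part 2 first via the Helffer-Sjöstrand formula, and then deduce part 1 using the spectral gaps of Theorem \ref{theo:intro_spec}. For the reduction, since $\Si$ is closed and $a,b \in \R \setminus \Si$, pick $\ep > 0$ so that the $2\ep$-neighborhoods of $a$ and $b$ are disjoint from $\Si$; Theorem \ref{theo:intro_spec} then ensures that for $k$ large these neighborhoods also miss $\spec(k^{-1}\Delta_k)$. Choose $\chi \in \Ci(\R,[0,1])$ equal to $1$ on $[a,b]$ and supported in $[a-\ep, b+\ep]$. Both $\chi$ and $f(\la) := \la \chi(\la)$ are supported in $(-\infty, \bs]$ for any $\bs \in (b+\ep, \infty) \setminus \Si$; moreover $\Pi_k = \chi(k^{-1}\Delta_k)$ and $k^{-1}\Delta_k \Pi_k = f(k^{-1}\Delta_k)$ for $k$ large. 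Since $\Si_y \subset \Si$ misses the $\ep$-neighborhood of $\{a,b\}$, $\chi(\PP_y) = 1_{[a,b]}(\PP_y)$ and $f(\PP_y) = \PP_y 1_{[a,b]}(\PP_y)$, so part 2 applied to $\chi$ and $f$ yields part 1.

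For part 2, fix $g \in \Ci(\R,\C)$ with $\op{supp} g \subset (-\infty, \bs]$ and $\bs \in \R \setminus \Si$. Because $\spec(k^{-1}\Delta_k) \subset [\inf V_1, \infty)$ uniformly in $k$, I may multiply $g$ by a compactly supported cutoff without altering $g(k^{-1}\Delta_k)$ and assume $g$ is compactly supported. Let $\tilde g$ be an almost-holomorphic extension supported in a thin complex neighborhood $U$ of $\op{supp} g$ with $U \cap \R \subset (-\infty, \bs']$ for some $\bs' \in \R \setminus \Si$ close to $\bs$, and satisfying $\bar\partial \tilde g(z) = \bigo_N(|\op{Im}\,z|^N)$ for all $N$. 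The Helffer-Sjöstrand formula then gives
\begin{equation*}
g(k^{-1}\Delta_k) = -\frac{1}{\pi} \int_\C \bar\partial \tilde g(z)\, R_k(z)\, dL(z), \qquad R_k(z) := (z - k^{-1}\Delta_k)^{-1},
\end{equation*}
with the analogous formula applied to the finite-dimensional operator $\PP_y$ yielding $g(\PP_y)$.

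The core technical step is then to construct, for $z$ in a compact subset of $\{\op{Re}\,z \leqslant \bs'\}$ with $\op{Im}\,z \neq 0$, a parametrix $Q_k(z) \in \lag(A)$ with principal symbol $\si_0(Q(z))(y) = (z-\PP_y)^{-1}$ such that $\|R_k(z) - Q_k(z)\| = |\op{Im}\,z|^{-C}\bigo(k^{-\infty})$, with uniform polynomial control in $|\op{Im}\,z|^{-1}$. I would first invoke the symbol calculus of \cite{oimpart1} to obtain an initial $Q_k^{(0)}(z) \in \lag(A)$ with the prescribed principal symbol; the local comparison of $k^{-1}\Delta_k$ with the pointwise model $\PP_y$ from Section \ref{sec:pointwise-data} then gives $(z - k^{-1}\Delta_k)Q_k^{(0)}(z) = I - E_k(z)$ with $\|E_k(z)\| = \bigo(|\op{Im}\,z|^{-1} k^{-1/2})$, and a Neumann-series iteration inside the algebra $\lag(A)$ upgrades this to a full expansion $Q_k(z)$. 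Setting $P_k := -\frac{1}{\pi}\int_\C \bar\partial \tilde g(z) Q_k(z) dL(z)$, the rapid decay of $\bar\partial \tilde g$ near $\R$ dominates the polynomial blow-up in $|\op{Im}\,z|^{-1}$, integration preserves the Gaussian kernel structure \eqref{eq:beginsplit-p_k-x+} so that $P_k \in \lag(A)$, and $P_k - g(k^{-1}\Delta_k) = \bigo(k^{-\infty})$ on the operator level, whence $g(k^{-1}\Delta_k) \in \lag(A)$; its symbol at $y$ is then $-\frac{1}{\pi}\int_\C \bar\partial \tilde g(z) (z-\PP_y)^{-1} dL(z) = g(\PP_y)$ by the finite-dimensional Helffer-Sjöstrand formula. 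The main obstacle is precisely this parametrix construction: building $Q_k(z) \in \lag(A)$ with the correct principal symbol and a uniform polynomial dependence on $|\op{Im}\,z|^{-1}$ is the heart of the argument and relies crucially on the algebraic structure of $\lag(A)$ developed in \cite{oimpart1}.
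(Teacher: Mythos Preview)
Your reduction of part 1 to part 2 is correct and the paper makes exactly this observation. However, your route to part 2 has a genuine gap, and it is precisely the obstacle that forces the paper to prove part 1 \emph{first}.

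The problem is the parametrix $Q_k(z)$ you propose. You want $Q_k(z)\in\lag(A)$ with principal symbol $\si_0(Q(z))(y)=(z-\PP_y)^{-1}$. But the symbol space $\symb(T_yM)$ consists of \emph{finite} linear combinations of the rank-one operators $\Uu_{\al\be}=|\al\rangle\langle\be|$; equivalently, via $\op{Op}$, symbols correspond to \emph{polynomials} in $\pol(T_yM)$. Since $\PP_y$ has unbounded discrete spectrum $\Si_y$, the resolvent $(z-\PP_y)^{-1}$ is never finite rank on $\D(T_yM)\otimes A_y$ and hence is \emph{not} a symbol in $\symb(T_yM)\otimes\op{End}A_y$. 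So no element of $\lag(A)$ can have principal symbol $(z-\PP_y)^{-1}$, and the first step of your Neumann iteration cannot even begin. A related symptom: the identity operator is not in $\lag(A)$ either, so the equation $(z-k^{-1}\Delta_k)Q_k^{(0)}(z)=I-E_k(z)$ has no meaning inside the algebra.

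The paper circumvents this as follows. Part 1 is proved directly by a Cauchy--Riesz contour integral in the gap (not Helffer--Sj\"ostrand): one first gets a $\bigo(k^{-1/4})$ approximation of $\Pi^I_k$ from the resolvent estimate of Corollary \ref{cor:spectrum}, then constructs a \emph{formal projector} $(P_k)\in\lag^\infty(A)$ with symbol $\pi^I$ satisfying $P_k^2\equiv P_k$ and $[\Delta_k,P_k]\equiv 0$ modulo $\lag^\infty_\infty(A)$ (Lemma \ref{lem:formal-projector}), and finally promotes the $\bigo(k^{-1/4})$ to $\bigo(k^{-\infty})$ by a spectral argument (Lemmas \ref{lem:normeL2}--\ref{lem:norme_L2_plus}). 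Only \emph{after} part 1 is established does the paper prove part 2, by writing $g(k^{-1}\Delta_k)=\Pi_k\,g(k^{-1}\Delta_k\Pi_k)\,\Pi_k$ with $\Pi_k=1_{(-\infty,\bs]}(k^{-1}\Delta_k)$ and applying Helffer--Sj\"ostrand to the \emph{Toeplitz} operator $k^{-1}\Delta_k\Pi_k$ (Theorem \ref{theo:local-spectr-estim} and Corollary \ref{cor:local-spectr-estim}). The point is that the relevant symbol is now $(z-f)^{-1}\pi$ with $f$ the restriction of $\PP$ to the finite-rank bundle $F=\op{Im}\pi$, and this \emph{is} a legitimate element of $\symb(TM)\otimes\op{End}A$. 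In short, truncation to a finite spectral window is exactly what makes the resolvent symbol polynomial, and you cannot access that truncation without first proving part 1.
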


The second assertion is actually a generalisation of the first one because
choosing $ \bs >b$ such that $[b, \bs] \cap \Si = \emptyset$, one has 
$1_{[a,b]} = g$ on an open neighborhood of $\Si$ with $g \in \Ci ( \R)$
supported in $]-\infty, \bs]$, and by  Theorem
\ref{theo:intro_weyl_global}, 
$1_{[a,b]} (\la) = g ( \la)$ for
any $\la \in \spec ( k^{-1} \Delta_k)$ when $k$ is sufficiently large.

\subsection{Comparison with earlier results}

This work started as a collaboration with Yuri Kordyukov and some of the
results presented here appeared also in \cite{Yuri_1}: the existence of
spectrum gaps, that is \eqref{eq:counting} when $[a,b ] \cap \Si = \emptyset$, and a
weak version of \eqref{eq:spectr_approx_2} with a $\bigo ( k^{-\frac{1}{4}}) $
instead of the $\bigo ( k^{-\frac{1}{2}})$ are proved in \cite[Theorem
1.2]{Yuri_1}. Moreover, under the assumption of Theorem \ref{theo:intro_proj},
the Schwartz kernel  of the spectral projector $\Pi_k = 1_{[a,b]} ( k^{-1}
\Delta_k)$ is described in \cite[Theorem 1.6]{Yuri_1} in a way similar to our result. 

In the case where $j_B = j$ and $V$ is constant, the existence of spectrum
gaps, that is \eqref{eq:counting} when $[a,b ] \cap \Si = \emptyset$,  was proved in
\cite[Theorem 10.2.2]{FaTs15}. Our proof will follow
the same line as in \cite{FaTs15} and is similar to the proof in \cite{Yuri_1}.

In the case again where $j_B =j$ and $V=0$, the first gap  and the
asymptotic description of  the first cluster has a long history. When $j$ is integrable so that $M$ is a complex manifold and $\om$
is K\"ahler, the gap follows from Kodaira vanishing Theorem, the first
cluster consists of the holomorphic sections of $L^k$, its dimension is given
by the Riemann-Roch-Hirzebruch theorem, the Schwartz kernel of the
corresponding spectral projector is the Bergman kernel, whose asymptotic can
be deduced from \cite{BoSj} and has been used in many papers starting from
\cite{Ze}. The extension to almost-complex structure has been done in
\cite{GuUr88}, \cite{BoUr07}, \cite{MaMa08}. Parallel results for
spin-c Dirac operators have been proved in \cite{BoUr96}, \cite{MaMa02},
\cite{MaMa}. 

The main tool we use in this paper is the algebra $\lag (A)$ introduced in
\cite{oimpart1}, a first weaker version was proposed in \cite{oim}. The
asymptotic expansions \eqref{eq:beginsplit-p_k-x+} or similar versions have
been used before by several authors to describe spectral projector on the first
cluster and corresponding Toeplitz operators \cite{ShZe}, \cite{oimbt},
\cite{MaMa} for instance. In \cite{oimpart1}, besides establishing the main properties of
$\lag (A)$, we considered some projectors $(\Pi_k)$ in $\lag (A)$ whose symbol at
$y \in M$ is the projector onto the $m$-th level of a Landau Hamiltonian $\sum
 \ac_i^{\dagger} \ac_i$. In particular we computed the rank of $\Pi_k$ as a
 Riemann-Roch number and we studied the corresponding Toeplitz algebra.  By
 the results of the current paper, particular instances of such projectors are the spectral
 projectors on the $m$-cluster of a magnetic Laplacian with $j_B= j$ and $V=0$.

In a different context, many works have been devoted to the magnetic
Schrödinger operator in $\R^n$, cf. \cite{NiRa} for a general overview. 
The most significant result  is a semiclassical description of the bottom of the spectrum in terms of 
effective operators whose principal symbols are the functions we denoted by $\la_i$,
cf. for instance \cite[Theorem 6.2.7]{Ivrii}, \cite[Theorem 1.6]{SaNi} or 
\cite[Theorem 2]{LM} for a statement in the manifold setting. These works differ
in at least two ways from the current paper: the global gap assumption is
generally replaced by a
confinement hypothesis, typically
the function we denote by $\la_0$
is assumed to have a non-degenerate minimum. Moreover, the general strategy is
to put the Schr\"odinger operator on a normal form by conjugating it with a
convenient Fourier integral operator. 

$$ $$

\subsection{Outline of the paper}

The main idea in the first part of the paper is to approximate the Laplacian
$\Delta_k$ locally by a family of Laplacians $\Delta_{y,k}$, $y \in M$ obtained
from $\Delta_k$ by ``freezing'' the coordinates at $y$. In
Section \ref{sec:linear-data} we introduce these operators, recall the basic results regarding their
spectrum  and explain the relationship with the operators
$\PP_y$ of Section \ref{sec:pointwise-data}. In Section \ref{sec:class-magn-lapl}, we introduce a
class of Laplacian slightly more general than the magnetic Laplacian
$\Delta_k$ and which are well approximated by the $\Delta_{y,k}$. This class
contains the holomorphic Laplacian and some of its generalisation without
integrable complex structure. In section \ref{sec:spectral-estimates}, we
prove a weak version of Theorem
\ref{theo:intro_spec} which says that $\spec (k^{-1} \Delta_k) \rightarrow \Si$ in
the limit $k \rightarrow \infty$, by constructing on one hand some peaked sections which are
approximate eigenmodes of $\Delta_k$, and on the other hand, by inverting 
 $\la - k^{-1} \Delta_k$ up to a $\bigo ( k^{-\frac{1}{4}})$ when $\la \notin
 \Si$. 
 
In the second part of the paper, Sections \ref{sec:operator-class-lag} and
\ref{sec:proof-theor-refth}, we introduce the algebra  $\lag(A)$ and prove
that the spectral projector $1_{[a,b]}( k^{-1} \Delta_k)$ belongs to $\lag
(A)$ when $a, b \in \R \setminus \Si$.  The proof is divided in three steps:
from the resolvent estimate of Section \ref{sec:spectral-estimates}, we deduce
that any operator of $\lag (A)$ having for symbol $1_{[a,b]} ( \PP)$ is an approximation of
$1_{[a,b]} ( k^{-1} \Delta_k)$ up to a $\bigo ( k^{-\frac{1}{4}})$. We then
prove that $\lag (A) / \bigo ( k^{-\infty})$ has a unique self-adjoint projector having for
symbol $1_{[a,b]} ( \PP)$ and commuting with $\Delta_k$. Finally
we prove that this operator is indeed the spectral projector.

In the last part, Section \ref{sec:glob-spectr-estim}, we establish some spectral properties for the Toeplitz
operators associated to the projectors of $\lag (A)$, including a sharp
G{\aa}rding inequality and the functional calculus. Then we deduce Theorem
\ref{theo:intro_weyl_global}, Theorem \ref{theo:intro_weyl_local} and the
second part of Theorem \ref{theo:intro_proj}. 

\subsection*{Acknowledgment} I would like to thank Yuri Kordyukov for his collaboration at an early stage of this
work. I benefited from Colin Guillarmou insight to establish the pointwise
estimate of spectral projectors. And I had the chance to present this work to
Pierre Flurin, L\'eo Morin and San V{\~u} Ng{\d{o}}c, which helped me to improve it.

\section{The linear pointwise data} \label{sec:linear-data}

In this section we consider a compact manifold $M^{2n}$ equipped with a symplectic
form $\om$ and a Riemannian metric $g$. Let $A \rightarrow M$ be a Hermitian
vector bundle with a section $V$ of $\Ci ( M , \op{End} A )$ such that $V(x)$
is Hermitian for any $x\in M$.  We choose a point $y \in M$.

\subsection{The complex structure}

Let  $j_{B,y} $ be the endomorphism of $T_yM$ such that
$\om_y  ( \xi , \eta) = g_y ( j_{B,y}  \xi, \eta)$. It will be useful to work
with the following normal form.  
\begin{lemme} \label{lem:normal_form}
  There exists $0< B_1(y) \leqslant \ldots \leqslant B_n(y)$ such
  that $T_yM$ has a basis $(e_i,f_i)$ satisfying
\begin{gather*} 
\om_y
(e_i, e_j) = \om_y (f_i , f_j) =0, \qquad  \om_y ( e_i, f_j) = \delta_{ij} \\
j_{B,y} e_i = B_i(y) f_i, \qquad j_{B,y} f_i = -B_i(y) e_i
\end{gather*}
The vectors $u_i= \frac{1}{\sqrt 2} ( e_i - i f_i)$, $\con{u}_i = \frac{1}{\sqrt 2}
( e_i + i f_i) $ are a basis of $T_yM \otimes \C$ and
\begin{gather*} 
\tfrac{1}{i} \om_y ( u_i, u_j) = \tfrac{1}{i} \om ( \con{u}_i, \con{u}_j) =
0 \qquad \tfrac{1}{i} \om ( u_i , \con{u}_j) = \delta_{ij} \\
j_{B,y} u_i  = i B_i ( y) u_i, \qquad j_{B,y} \con{u}_i  = - i B_i ( y)
\con{u}_i
\end{gather*} 
\end{lemme}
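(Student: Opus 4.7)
The plan is to diagonalize $-j_{B,y}^2$ with respect to $g_y$, decompose $T_yM$ into two-dimensional $j_{B,y}$-invariant planes, and then rescale to achieve the symplectic normalization.

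First, I would observe that $j_{B,y}$ is $g_y$-antisymmetric: from $\om_y(\xi,\eta) = g_y(j_{B,y}\xi,\eta)$ and the antisymmetry of $\om_y$ one gets $g_y(j_{B,y}\xi,\eta) = -g_y(\xi, j_{B,y}\eta)$. Consequently $-j_{B,y}^2$ is $g_y$-symmetric and non-negative, and because $\om_y$ is non-degenerate it is in fact positive definite. The spectral theorem then yields a $g_y$-orthogonal decomposition $T_yM = \bigoplus_\la E_\la$ into eigenspaces of $-j_{B,y}^2$ corresponding to eigenvalues $\la > 0$.

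Each $E_\la$ is $j_{B,y}$-invariant, and $\la^{-1/2} j_{B,y}$ acts on it as a $g_y$-orthogonal complex structure; in particular $E_\la$ has even dimension. I would build a $g_y$-orthonormal basis of $E_\la$ of the expected form by the following iterated procedure: pick a $g_y$-unit vector $e' \in E_\la$ and set $f' := \la^{-1/2} j_{B,y} e'$. A direct computation using the antisymmetry of $j_{B,y}$ gives $g_y(e',f')=0$ and $g_y(f',f')=1$, and shows that the $g_y$-orthogonal complement of $\text{span}(e',f')$ inside $E_\la$ is again $j_{B,y}$-invariant, so the process repeats. Concatenating such bases over all $\la$ and setting $B_i(y) := \sqrt{\la_i}$ ordered non-decreasingly, I obtain a $g_y$-orthonormal basis $(e'_i, f'_i)_{1 \le i \le n}$ of $T_yM$ satisfying $j_{B,y} e'_i = B_i(y) f'_i$ and $j_{B,y} f'_i = -B_i(y) e'_i$.

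To achieve the symplectic normalization I would rescale $e_i := B_i(y)^{-1/2} e'_i$ and $f_i := B_i(y)^{-1/2} f'_i$. The action of $j_{B,y}$ then reads $j_{B,y} e_i = B_i(y) f_i$ and $j_{B,y} f_i = -B_i(y) e_i$, while the symplectic relations $\om_y(e_i,e_j) = \om_y(f_i,f_j) = 0$ and $\om_y(e_i,f_j) = \de_{ij}$ follow from $\om_y(\cdot,\cdot) = g_y(j_{B,y}\cdot,\cdot)$ and from the orthonormality of $(e'_i,f'_i)$. Finally, the identities involving $u_i = \frac{1}{\sqrt 2}(e_i - i f_i)$ and $\con u_i$ follow by direct bilinear expansion from the already established relations. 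There is no substantive obstacle here; the only subtlety to track is the mismatch between the $g_y$-normalized eigenbasis of $|j_{B,y}| := \sqrt{-j_{B,y}^2}$ and the symplectic basis, which is precisely corrected by the rescaling factor $B_i(y)^{-1/2}$.
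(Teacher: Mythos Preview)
Your proof is correct and follows essentially the same approach as the paper: both exploit that $j_{B,y}$ is $g_y$-antisymmetric to produce a $g_y$-orthonormal basis $(\tilde e_i,\tilde f_i)$ in which $j_{B,y}$ has the block form $j_{B,y}\tilde e_i=B_i\tilde f_i$, $j_{B,y}\tilde f_i=-B_i\tilde e_i$, and then rescale by $B_i^{-1/2}$ to obtain the symplectic basis. The only difference is that the paper invokes this normal form for real antisymmetric endomorphisms as a known fact, whereas you spell out its proof via the spectral decomposition of $-j_{B,y}^2$.
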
  
\begin{proof} Since $j_{B,y}$ is a $g_y$-antisymmetric invertible endomorphism
  of $T_yM$, there
  exists a $g_y$-orthonormal basis $(\tilde{e}_i, \tilde{f}_i)$ such that
  $j_{B,y} \tilde{e}_i = B_i(y) \tilde{f}_i$ and $j_{B,y} \tilde{f}_i = -
  B_i(y) \tilde{e}_i$, where the $B_i(y)$'s are positive. We set  $e_i =
  (B_i(y))^{-\frac{1}{2}} \tilde{e}_i$  and $f_i =  (B_i(y))^{-\frac{1}{2}}
      \tilde{f}_i$, and the result follows by direct computations.  
\end{proof}
We can interpret this result as follows: first, $\frac{1}{i} j_{B,y}$ is $\C$-diagonalisable with only nonzero real
eigenvalues, denoted by $\pm B_i(y)$. Second, the subspace $W$ of $T_yM \otimes \C$ spanned
by the $u_i$'s is the sum of the eigenspaces of $\frac{1}{i} j_{B,y}$ with a
positive eigenvalue. $W$ is Lagrangian and the sesquilinear form $h_y$ of
$T_yM \otimes \C$ given by $h_y(u,v) = \frac{1}{i} \om_y ( u, \con{v})$  is positive on $W$.
Equivalently the endomorphism $j_y$ of $T_yM$ such that $j_y = i$ on $W$ is a
complex structure of $T_yM$ compatible with $\om_y$. So from now on, we will
denote $W = \op{ker} (j_y - i)$ by $T^{1,0}_yM$ and by the definition of $j_y$, the restriction
of $\frac{1}{i} j_{B,y}$ to $T^{1,0}_yM$ is a positive endomorphism of
$(T^{1,0}_yM, h_y )$ with eigenvalues the $B_i(y)$. Hence the vectors $(u_i)$ in
Lemma \ref{lem:normal_form}
are nothing else than a $h_y$-orthonormal eigenbasis of $T^{1,0}_yM$. 

An important remark is that $j_y$ depends smoothly on $y$, so it defines an
almost complex structure of $M$. Indeed, the space $T^{1,0}_yM$ depends
smoothly on $y$ because $\frac{1}{i} j_{B,y}$ being invertible, no eigenvalue can cross
$0$. Another reason is that $j_y = |j_{B,y}|^{-1} j_{B,y}$ where $|j_{B,y}|$
is the positive square root of the $g_y$-positive endomorphism $-j_{B,y}^2$. Actually,
the construction of $j$ is the classical proof of the fact that any symplectic manifold admits a compatible almost-complex
structure, cf  \cite[Proposition 2.5.6]{MS}.

To the contrary, in general, we cannot choose a local continuous symplectic frame
$(e_i,f_i)$ of $TM$ such that
$j_B e_i = B_i f_i$, $j_B f_i = B_i e_i$, even if we
renumber the eigenvalues $B_i(y)$ in a way depending on $y$. Indeed, as is
well known, it is not possible in general to diagonalise smoothly a symmetric
matrix, the symmetric matrix being $-(j_{B,y})^{2}$ in our case. More specifically,
consider on $\R^2 \otimes \R^2$ with its usual Euclidean structure the
endomorphism $j_B(s,t) = M (s,t)\otimes j_2$ where
$$ M (s,t) = \begin{pmatrix} 1+s & t \\
  t & 1 -s 
\end{pmatrix}, \qquad j_2 = \begin{pmatrix} 0 & -1 \\
  1 & 0 
\end{pmatrix},$$
$s$ and $t$ being parameters in a neighborhood of $0$. Then $j_B$ is non
degenerate and antisymmetric, and we can choose for each $(s,t)$ a basis
$(e_i,f_i)$ satisfying the previous conditions, but not continuously with respect
to $(s,t)$.  Indeed, $- j_B^2 (s,t) = M^2(s,t) \otimes \op{id}$ and for $s=0$,
$t$ small non zero, the eigenspaces of $M(s,t)$ are $(1,1)\R$ and $(1,-1)\R$ whereas
for $t=0$ and $s$ small non zero, they  are $(1,0)\R$ and
$(0,1) \R$.

This example appears on $\R^4$ equipped with its usual Euclidean metric and the 
closed form 
$$ \om = (1+ p_1) dp_1 \wedge dq_1 + (1-p_2) dp_2 \wedge dq_2 + q_1 dq_1 \wedge dp_2
- q_2 dp_1 \wedge dq_2 $$
which is symplectic on a neighborhood of the origin. On the plane $\{ p_1 =
p_2, \; q_1 = q_2 \}$, the matrix of $j_B$ is $M(p_1, q_1) \otimes j_2$. 

We have also to be careful that the metric $\tilde{g}$ determined by
$(\om, j)$
\begin{gather} \label{eq:tildeg}
\tilde{g}_y ( \xi , \eta ) := \om_y (  \xi , j_y \eta) = g_y ( |j_{B,y}| \xi, \eta) .
\end{gather}
 is equal to $g_y$ only when $B_1(y) = \ldots = B_n(y) = 1$, that is when
$j_{B,y}$ is itself a complex structure. 

\subsection{The scalar Laplacian of $T_yM$} \label{sec:laplacian-linear}

Consider now the covariant derivative
\begin{gather}  \label{eq:nabla-linear}
\nabla = d + \tfrac{1}{i} \al : \Ci (
T_yM ) \rightarrow \Om^1 ( T_yM ) 
\end{gather}
where
$\al \in \Om^1(T_y M, \R)$ is given by $\al_\xi ( \eta) = \frac{1}{2} \om_y ( \xi, \eta)$.  Since $d \al = \om_y$, the curvature of $\nabla$
is $ \frac{1}{i} \om_y$. We then define the scalar Laplacian of $T_yM$ by
\begin{gather} \label{eq:delta_x}
\dscal_y  :=  \tfrac{1}{2} \nabla^* \nabla   : \Ci (
T_yM) \rightarrow \Ci (T_yM).
\end{gather}
Here the scalar
products of $\Ci (T_yM)$ and $\Om^1 (T_yM)$ are defined by integrating the
pointwise scalar products against a fixed constant volume form, the pointwise
scalar product of $\Om^1 (T_yM)$ is defined from the metric $g_y$.

We can explicitly compute the spectrum and eigenfunctions of
$\dscal_y$ as
follows. Introduce a basis $(e_i,f_i)$ of $T_yM$ as in Lemma \ref{lem:normal_form}. This basis is
$g_y$-orthogonal and $g_y(e_i, e_i) = g_y(f_i, f_i) =
B_i (y) ^{-1}$, so we have
\begin{xalignat*}{2} 
\dscal_y & = - \tfrac{1}{2} \sum_{i=1}^n B_i(y)  (
\nabla_{e_i} ^2 + \nabla_{f_i}^2 )  \\
& =  \sum_{i=1}^{n} B_i(y)  \bigl( - \nabla_{u_i}
\nabla_{\con{u}_i} + \tfrac{1}{2} \bigr)  
\end{xalignat*}
where $u_i = \frac{1}{\sqrt 2} ( e_i - i f_i)$, $\con{u}_i = \frac{1}{\sqrt 2}
( e_i + i f_i) $. Denote by $z_i$ the linear complex coordinates dual to the
$u_i$. If $(p_i,q_i)$ are the real linear coordinates of $T_yM$ in the basis
$(e_i,f_i)$, then $z_i = \frac{1}{\sqrt 2} ( p_i + i q_i)$. 
Since $\om_y = i \sum dz_i \wedge d \con{z}_i$, we have  $$\nabla = d + \tfrac{1}{2} \sum_{i=1}^n
(z_i d\con{z}_i - \con{z}_i dz_i)
.$$
Introduce the function $s( \xi )  := \exp (
-\frac{1}{4} |\xi|_y^2)$, $\xi \in T_yM$, where
$$ |\xi|_y^2 = \textstyle{\sum}_i (p_i^2 + q_i^2) = 2 \textstyle{\sum}_i |z_i|^2 = \tilde{g}_y( \xi, \xi) $$
 Since $s =  \exp ( - \frac{1}{2}
|z|^2) $, we have $\nabla_{\con{u}_i} s = 0$, so $s$ is $\nabla$-holomorphic.

Let us consider $\Ci(T_yM)$ as the space of sections of the trivial line
bundle over $T_yM$
and let us use $s$ as a global frame. Introduce the operators
\begin{gather} \label{eq:a_i_def}
\ac_i = \partial_{\con{z}_i}, \qquad \ac_i^{\dagger} = \con{z}_i -
\partial_{z_i}. 
\end{gather}
Then   $\nabla_{\con{u}_i} (f s
) = (\ac_i f ) s$ and $ \nabla_{u_i}(f s) = - (\ac_i^\dagger f) s$, so that
\begin{gather} \label{eq:def_PPP}
  \dscal_y (fs ) = (\PPP^{\op{scal}}_y f)s \qquad \text{with} \qquad  \PPP_y^{\op{scal}} := \sum_{i=1}^n
B_i(y) ( \ac_i^{\dagger} \ac_i + \tfrac{1}{2} ) . 
\end{gather}
Let $\pol (T_yM )$ be the space of polynomial
functions $T_yM  \rightarrow \C$, non necessarily holomorphic or anti-holomorphic. With the
coordinates $(z_i)$, $\pol (T_yM) = \C [z_1, \ldots, z_n , \con{z}_1, \ldots,
\con{z}_n]$. Observe that $\ac_i$ and $\ac_i^\dagger$ preserves $\pol (T_yM )$ and
the same holds for $\PPP_y^{\op{scal}}$.   

Since the  $\ac_i$, $\ac_i^{\dagger}$ satisfy the so-called canonical
commutation relations
$$ [ \ac_i ,
\ac_j ] = [\ac_i^{\dagger}, \ac_j^{\dagger} ] = 0 , \qquad [\ac_i , \ac_j^{\dagger}] =
\delta_{ij}, $$   
we deduce by a classical argument that the endomorphisms $\ac_i^{\dagger} \ac_i$ of
$\pol ( T_yM )$ are mutually commuting endomorphisms, each of them
diagonalisable with spectrum $\N$, cf. for instance \cite[Proposition 4.1]{oimpart1}. So we have a decomposition into joint eigenspaces
\begin{gather} \label{eq:dec_spectral}
\pol ( T_yM ) = \bigoplus_{\al \in
    \N^n} \mathfrak{L}_\al \quad \text{ with } \quad \mathfrak{L}_{\al} = \textstyle{\bigcap}_{i=1}^{n} \op{ker} (  \ac_i^\dagger \ac_i - \al
  (i) ).
\end{gather}
Furthermore, $ \mathfrak{L}_0 = \C [z_1, \ldots ,z_n]$ and $\mathfrak{L}_{\al}
=  (\ac^\dagger)^\al \mathfrak{L}_0 $, for all $\al \in \N^n $
  where $(\ac^{\dagger} )^{\al} = (\ac_1^{\dagger} )^{\al (1)} ...(\ac_n^{\dagger} )^{\al (n)}$.
  Consequently $\PPP^{\op{scal}}_y$ is a diagonalisable endomorphism of
$\pol ( T_yM)$ with spectrum $\Si_y^{\op{scal}}$ given by 
\begin{gather} \label{eq:def_si_y_scal}
 \Si_y^{\op{scal}} = \bigl\{ \textstyle{\sum}_{j=1}^n B_j(y)( \al (j)  + \tfrac{1}{2} ),\; \al
\in \N^n \bigr\} .
\end{gather}
Moreover the eigenspace $\mathcal{E} ( \la)$ of the eigenvalue $\la \in \Si_y$ is the sum of the
$\mathfrak{L}_{\al}$ where $\al$ runs over the multi-indices of $\N^n$  such that $\sum B_i(y) (\al (i) +
\frac{1}{2} )  = \la$. 

We can deduce from these algebraic facts the $L^2$-spectral theory of
$\dscal_y$.  First of all, the space $\exp ( -\frac{1}{4} |\xi|_y^2)  \pol ( T_yM)$ is dense in $L^2 ( T_yM)$ by the same
proof that Hermite functions are dense. So we deduce from
\eqref{eq:dec_spectral} a decomposition of $L^2 (T_yM)$ in a Hilbert  sum of orthogonal subspaces
\begin{gather} \label{eq:dec_Hilbert} 
L^2 ( T_yM ) = \bigoplus_{\al \in \N^n} \mathcal{K}_{\al} , \qquad
 \mathcal{K}_{\al} = \con{ e^{-\frac{1}{4}|\xi|_y^2} \mathfrak{L}_\al} ^{L^2(T_y
   M)}, \; \forall \al \in
 \N^n
\end{gather}
Let $\dom $ be the subspace of $L^2 (T_y M)$ consisting of the
$\psi $ having a decomposition $\sum \psi_{\al}$ in \eqref{eq:dec_Hilbert}
such that $\sum |\al|^2 \| \psi_{\al} \|^2 $ is finite. As a differential
operator,  $\dscal_y $ acts on the distribution space $\mathcal{C}^{-\infty} (T_yM)$ and in particular on
$\dom$. 

\begin{lemme} \label{lem:spectre}
  The space $\dscal_y ( \dom )$ is contained in $L^2
  (T_yM)$ and  the restriction of $\dscal_y$
to $\dom $ is a self-adjoint unbounded operator of $L^2 (T_yM)$.
Furthermore, its spectrum is $\Si_y$ and consists only of
eigenvalues, the eigenspace of $\la \in \Si_y $ being the closure of $ \exp (
-\frac{1}{4} |\xi|_y^2)  \mathcal{E} ( \la)$.
\end{lemme}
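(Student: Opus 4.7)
The plan is to diagonalise $\dscal_y$ directly using the Hilbert decomposition \eqref{eq:dec_Hilbert}, with the key input being \eqref{eq:def_PPP}: on the Schwartz-class dense subspace $e^{-|\xi|_y^2/4}\pol(T_yM)$, the Laplacian acts like the purely algebraic harmonic oscillator $\PPP^{\op{scal}}_y$, and so acts on each summand $e^{-|\xi|_y^2/4}\mathfrak{L}_\alpha$ as pure multiplication by $\la_\alpha:=\sum_i B_i(y)(\al(i)+\tfrac12)$.

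First I would introduce a diagonal model: define the unbounded operator $T$ on $L^2(T_yM)$ with domain $\dom$ by $T\psi=\sum_\alpha \la_\alpha\psi_\alpha$, where $\psi=\sum_\alpha\psi_\alpha$ is the Hilbert decomposition \eqref{eq:dec_Hilbert}. Since all $B_i(y)>0$, the eigenvalue $\la_\alpha$ grows linearly in $|\al|$, bounded above and below by multiples of $|\al|$; hence the square summability $\sum|\al|^2\|\psi_\alpha\|^2<\infty$ defining $\dom$ is equivalent to $\sum\la_\alpha^2\|\psi_\alpha\|^2<\infty$, so $\dom$ is precisely the natural domain of the diagonal operator. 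Standard Hilbert-space arguments then show $T$ is self-adjoint with pure point spectrum $\Si^{\op{scal}}_y$ and $\la$-eigenspace $\bigoplus_{\la_\alpha=\la}\mathcal{K}_\alpha$, which by construction is the $L^2$-closure of $e^{-|\xi|_y^2/4}\mathcal{E}(\la)$.

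The remaining point is to identify this $T$ with the distributional action of $\dscal_y$ on $\dom$. For $\psi\in\mathcal{K}_\alpha$, pick a sequence $\phi_n\in e^{-|\xi|_y^2/4}\mathfrak{L}_\alpha$ converging to $\psi$ in $L^2$; by \eqref{eq:def_PPP} one has $\dscal_y\phi_n=\la_\alpha\phi_n\to\la_\alpha\psi$ in $L^2$, and on the other hand $\dscal_y\phi_n\to\dscal_y\psi$ in $\mathcal{D}'(T_yM)$ because $\dscal_y$ is a differential operator with polynomial coefficients, hence continuous from $L^2$ to $\mathcal{D}'$. Thus each $\mathcal{K}_\alpha$ is a distributional eigenspace of $\dscal_y$ at the eigenvalue $\la_\alpha$. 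For general $\psi=\sum_\alpha\psi_\alpha\in\dom$, the partial sums $\psi_N=\sum_{|\al|\leq N}\psi_\alpha$ converge to $\psi$ in $L^2$, while $\dscal_y\psi_N=\sum_{|\al|\leq N}\la_\alpha\psi_\alpha$ converges in $L^2$ (by square summability) to $T\psi$; taking the limit in $\mathcal{D}'$ gives $\dscal_y\psi=T\psi$. Hence $\dscal_y|_{\dom}=T$, which yields self-adjointness together with the announced spectrum and eigenspaces.

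The step requiring the most care is the density assertion used in \eqref{eq:dec_Hilbert}, namely that $e^{-|\xi|_y^2/4}\pol(T_yM)$ is dense in $L^2(T_yM)$; this follows from the usual density argument for Hermite functions (tensoring the one-dimensional result along the orthonormal frame $(u_i)$). With that in hand, the remainder is Hilbert-space bookkeeping combined with the distributional continuity of a polynomial-coefficient differential operator, and the structural result stated for $\PPP_y^{\op{scal}}$ on polynomials transfers directly to the spectral structure of $\dscal_y$ on $L^2$.
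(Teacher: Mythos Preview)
Your proof is correct and is essentially the same approach as the paper's: the paper simply says this follows from the conjugation identity $\PPP^{\op{scal}}_y = e^{\frac{1}{4}|\xi|_y^2}\dscal_y e^{-\frac{1}{4}|\xi|_y^2}$ by ``elementary standard arguments'' (citing Davies), and you have written out exactly those standard arguments---build the diagonal model on the Hilbert decomposition and then identify it with the distributional action of $\dscal_y$. The only cosmetic point is that what you call $\Si_y^{\op{scal}}$ is what the lemma statement writes as $\Si_y$ (the full $\Si_y$ with the potential $V$ is introduced only in the next subsection).
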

This follows from $\PPP^{\op{scal}}_y = e^{\frac{1}{4} |\xi|_y^2} \dscal_y e^{-\frac{1}{4} |\xi|_y^2}
  $ by elementary
standard arguments, cf. \cite[Lemma 1.2.2]{Davies} for instance.  Observe also
that the unboun\-ded operator $(\dscal_y, \dom )$ is the closure of
$(\dscal_y$, $ e^{-\frac{1}{4} |\xi|_y^2}\pol ( T_yM)$).

\subsection{The $A_y$-valued Laplacian $\Delta_y$} \label{sec:a_y-valued-laplacian}
We now consider the full Laplacian
\begin{gather} \label{eq:lap_model}
\Delta_y := \Delta_y^{\op{scal}} + V(y) : \Ci ( T_y M , A_y ) \rightarrow
\Ci ( T_yM, A_y) 
\end{gather}
We deduce from the properties of $\Delta_y^{\op{scal}}$ that $(\Delta_y , \dom \otimes
A_y)$ is a selfadjoint unbounded operator of $L^2(T_yM) \otimes  A_y$ with discrete spectrum
\begin{gather}  \label{eq:si_y}
\Sigma_y = \bigl\{   \textstyle{\sum}_{j=1}^n B_j(y)( \al (j)  + \tfrac{1}{2}
) + V_\ell (y) ,\; \al
\in \N^n, \; \ell = 1, \ldots, r \bigr\} 
\end{gather}
where $V_1(y) \leqslant \ldots \leqslant V_r(y)$ are the eigenvalues of
$V(y)$. Let $(\zeta_\ell)$ be an eigenbasis of $V(y)$, $V(y) \zeta_\ell = V_\ell(y)\zeta_\ell$. Then any $\la \in \Sigma_y$ is an eigenvalue of $\Delta_y$
with eigenspace the closure of the sum of the $ \exp (
-\frac{1}{4} |\xi|_y^2) \mathcal{E}(
\la') \otimes \C \zeta_\ell$ such that $\la' + V_\ell (y) = \la$.  

In the sequel, we will mainly work with
\begin{gather} 
\PPP_y =e^{\frac{1}{4}|\xi|_y^2} \Delta_y  e^{-\frac{1}{4}|\xi|_y^2}= \PPP_y^{\op{scal}}+ V(y)
\end{gather}
acting on $\pol (T_yM) \otimes A_y$. 

\subsection{The restriction $\PP_y$ of $\PPP_y$ to anti-holomorphic polynomials} 
\label{sec:restr-delt-anti}

By its definition \eqref{eq:delta_x}, $\Delta_y$ depends smoothly on $y$,
acting  on a infinite dimensional space though. Nevertheless, in many
arguments, we can work in finite dimension by replacing $\pol (T_yM)$ by the
subspace  $\D (T_y M) \subset \pol (T_yM)$ of anti-holomorphic
polynomials. With the coordinates $(z_i)$ introduced previously, $\D (T_y M) =
\C [ \con{z}_1, \ldots , \con{z}_n ]$.

First, the annihilation and
creation operators $\ac_i$, $\ac_{i}^\dagger$ preserves the subspace $\D
(T_yM)$ in which they act respectively
by $\partial_{\con{z}_i}$ and $\con{z}_i$. Moreover the joint eigenspaces
$\mathfrak{L}_{\al}$ of the $ \ac_i^{\dagger} \ac_i$ satisfy $\mathfrak{L}_{\al} \cap \D
(T_xM) = \C \, \con{z}^\al$.
So $\PPP_y$ preserves $\D (T_yM) \otimes A_y $ and its restriction $\PP_y \in \op{End}
( \D (T_yM) \otimes A_y )$  has the same
spectrum as $\PPP_y$. For any eigenvalue $\la$, the corresponding eigenspaces of
$\PPP_y$ and $\PP_y$ are $\bigoplus \mathfrak{L}_\al \otimes \C \zeta_\ell $ and
$\bigoplus \C \con{z}^\al \otimes \C \zeta_\ell$
respectively where in both cases we sum over the $(\al, \ell)$ such that $\sum B_i (y) (
\al (i ) + \frac{1}{2} ) + V_{\ell}(y) = \la$. 

For any $p \in \N$, the endomorphism $\PP_y$ preserves the subspace
$\D_{\leqslant p} (T_yM) $ of
$\D(T_yM)$ of  polynomials with degree smaller than
$p$. These spaces are obviously finite-dimensional and their union
$\D_{\leqslant p}( TM) = \bigcup_y \D_{\leqslant p } (T_yM)$ is a genuine
vector bundle over $M$. Moreover $ y
\mapsto \PP_y |_{\D_{\leqslant p } (T_y M)}$ is a smooth section of $ \op{End} (
  \D_{\leqslant p} (TM))$.

\begin{lemme} \label{lem:smooth}  $ $ \begin{enumerate} 
 \item  
  For any $\bs >0$, there exists $p \in \N $ such that for any $ y \in M$ and $\la$ $\in$
  $\op{sp} (\PP_y) \cap (-\infty, \bs ]$, the eigenspace $\op{ker} ( \PP_y - \lambda)$ is contained
  in $\mathcal{D}_{\leqslant p} (T_yM) \otimes A_y$. 
\item For any compact interval $I$ whose endpoints do not belong to $\Si$, the spaces
  $$ F_y = \bigoplus_{\la \in \op{sp} ( \PP_y ) \cap I } \op{ker} ( \la -
  \PP_y), \qquad y \in M $$
  are the fibers of a subbundle $F$ of $\mathcal{D}_{\leqslant p} (TM) \otimes
  A$, with
  $p$ a sufficiently large integer. 
\end{enumerate}
\end{lemme}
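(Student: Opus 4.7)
The plan is as follows. For Part 1, I would extract from the explicit spectral description of $\PP_y$ given just above the lemma a uniform degree bound. The eigenvalues of $\PP_y$ are $\sum_i B_i(y)(\al(i)+\tfrac12) + V_\ell(y)$ with eigenvector $\con z^\al \otimes \zeta_\ell$. Because $\om$ is nondegenerate on the compact manifold $M$, the continuous function $B_1(y)$ is strictly positive, hence $B_{\min} := \inf_{y,i} B_i(y) > 0$ by compactness; similarly $V_{\min} := \inf_{y,\ell} V_\ell(y)$ is finite. An eigenvalue $\le \bs$ then forces $B_{\min}|\al| \le \bs - V_{\min}$ (up to an additive $\tfrac{n}{2} B_{\max}$), so $|\al| \le p$ for some integer $p=p(\bs)$ depending only on $\bs$ and the bundle data, which places $\con z^\al \otimes \zeta_\ell$ inside $\D_{\leq p}(T_yM) \otimes A_y$.

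For Part 2, my strategy is to realize $F_y$ as the image of a Riesz-type spectral projector and thereby inherit smoothness from that of $\PP_y$. By Part 1 applied to some $\bs > \sup I$, all eigenspaces in question lie in the smooth finite-rank bundle $\D_{\leq p}(TM) \otimes A$, on which $\PP$ is a smooth endomorphism-valued section (as noted just before the lemma). The crux is to produce a uniform-in-$y$ spectral gap at the endpoints $a,b$ of $I$: since the $\la_j$ are continuous on the compact $M$ and obey $\la_j \geq cj - C$, only finitely many of the compact sets $\la_j(M)$ meet a fixed neighborhood of $I$; combining the assumption $\{a,b\} \cap \Si = \emptyset$ with compactness of these finitely many sets yields $\eps > 0$ such that $\op{dist}(\{a,b\},\Si_y) \geq \eps$ for every $y \in M$. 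I would then choose a simple closed contour $\Ga \subset \C$ (a small rectangle will do) enclosing $I$ and lying at distance $\geq \eps/2$ from $\bigcup_y \Si_y$, and set
\[
  \Pi_y := \frac{1}{2\pi i} \oint_\Ga (z-\PP_y)^{-1}\, dz .
\]
Smoothness of $y \mapsto \PP_y$ together with the uniform invertibility of $z-\PP_y$ on $\Ga \times M$ makes $\Pi$ a smooth section of $\op{End}(\D_{\leq p}(TM) \otimes A)$; standard functional calculus shows that it is a projector onto $F_y$. Because a smooth family of projectors has locally constant rank, $F = \op{Im}\Pi$ is a smooth subbundle.

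The step I expect to cost the most care is the uniform gap: one must exploit \emph{both} the compactness of $M$ (to turn pointwise $\la_j(y) \neq a,b$ into uniform separation) and the growth estimate $\la_j \geq cj - C$ (to reduce to finitely many $j$ near $I$). Without the latter, the infimum over $j$ of $\op{dist}(\{a,b\}, \la_j(M))$ could a priori vanish and the contour integral would degenerate. Once the gap is in hand, smoothness of the resolvent and standard properties of finite-dimensional spectral projectors finish the argument.
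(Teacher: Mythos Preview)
Your proposal is correct and follows essentially the same approach as the paper: a uniform lower bound on the $B_i$ together with a lower bound on $V$ forces $|\al|\le p$ for Part~1, and for Part~2 the Cauchy--Riesz integral $\frac{1}{2\pi i}\oint_\Gamma(z-\PP_{y,p})^{-1}\,dz$ over a $y$-independent contour gives a smooth family of projectors with image $F_y$. Your treatment of the uniform spectral gap at the endpoints is in fact more detailed than the paper's, which simply asserts that the contour can be chosen independently of $y$; just be sure (as you implicitly do) to work with the restriction $\PP_{y,p}$ to the finite-rank bundle $\D_{\leqslant p}(TM)\otimes A$ so that the resolvent is a smooth section of a finite-dimensional endomorphism bundle.
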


\begin{proof}
The functions $B_i$ being continuous, $B_i(y) \geqslant \ep$ on
$M$ for some $\ep>0$,  so $\sum B_i(y) \al (i)  \geqslant \ep |\al|$ with
$|\al| = \al(1) + \ldots + \al (n)$. So $\sum B_i(y) ( \al (i) + \frac{1}{2})
+ V_{\ell} (y) \leqslant \bs$ implies $ \ep |\al| \leqslant \bs - \op{inf} V_1$, which
proves the first assertion with $p$ any integer larger than $\ep^{-1} ( \bs - \op{inf}
V_1)$. 

$I$ being bounded, by the first part, $F_y \subset \mathcal{D}_{\leqslant p}
(T_y M) \otimes A_y$ for any $y$, when $p$ is sufficiently large. The projector of
$\op{End} ( \mathcal{D}_{\leqslant p } (T_yM) \otimes A_y )$ onto $F_y$ is given by the
Cauchy integral formula   
\begin{gather} \label{eq:proj_symbol}
  (2 \pi i )^{-1} \int_{\ga}  (\la -
\PP_{y,p})^{-1} d\lambda ,
\end{gather}
where $\PP_{y,p}$ is the restriction of $\PP_y$ to $\D_{\leqslant p} (T_yM)
\otimes A_y$ and
$\ga$ is a loop of $\C \setminus \Si_y$ which encircles $I$. By the assumption
that the endpoints of $I$ do not belong to $\Si$, we can choose $\ga$
independent of $y$. Hence \eqref{eq:proj_symbol} depends smoothly on $y$ and its image $F_y$ as
well. 
\end{proof}

\section{A class of magnetic Laplacians} \label{sec:class-magn-lapl}

Consider a compact Riemannian manifold $(M,g)$ equipped with a Hermitian
line bundle $L$ with a connection $\nabla$ of curvature
$\frac{1}{i} \om$, and a Hermitian vector bundle $A$
with a section $V \in \Ci ( M, \op{End} (A))$ such that $V(x)$ is Hermitian
for any $x\in M$.

The results we will prove later hold for families of differential operators 
$$(\Delta_k : \Ci ( M , L^k
\otimes A) \rightarrow \Ci ( M ,L^k \otimes A), \ k \in \N)$$
 having the following local form:  for any
coordinate chart $(U, x_i)$ of $M$ and trivialisation $A|_U \simeq U \times
\mathbb{A}$, we have on $U$ by identifying $\Ci (U, L^k \otimes A)$ with $\Ci
(U, L^k \otimes \mathbb{A})$ that 
\begin{gather} \label{eq:ass_op} \tag{B}
\Delta_k = - \tfrac{1}{2} \sum g^{ij} \nabla_{i,k} \nabla_{j,k}  + k V + \sum a_i
\nabla_{i,k}  + b 
\end{gather}
where $g^{ij} = g ( dx_i, dx_j)$, $\nabla_{i,k}$ is the covariant derivative of
$L^k$ with respect to $\partial_{x^i}$, and $a_i$, $b$ are in $\Ci ( U,\op{End} (\mathbb{A})) $ and do not depend on $k$.

 So on a semi-classical viewpoint with  $\hbar = k^{-1}$,  $k^{-2} \Delta_k
= \si ( x , \frac{1}{ik} \nabla_i^k )$ with $\si$ the (normal) symbol given
by
\begin{gather} \label{eq:semi_class}
\si ( x, \eta)
= -\tfrac{1}{2} \sum g^{ij}(x)
\eta_i \eta_j + \hbar V(x) + \sum \hbar a_i(x) \eta_i + \hbar^2 b(x).
\end{gather}
As we will see, for the small eigenvalues, the variable $\eta_i$ has the same
weight as $\hbar^{1/2}$, so in the above sum, the two first terms have weight
$\hbar$ and the third and fourth terms are lower order terms with weight
$\hbar^{3/2}$ and $\hbar^2$ respectively. 

In this section, we will prove that various operators have the form 
\eqref{eq:ass_op}: the magnetic Laplacian defined in the introduction
\eqref{eq:delta_k}, the holomorphic Laplacian and also some generalised
Laplacians associated to semi-classical Dirac operators.

\subsection{About Assumption \eqref{eq:ass_op} }
The
proof that some operators satisfy Assumption \eqref{eq:ass_op} consists in each case of establishing a Weitzenb\"ock type formula. Since we
don't need to give a geometric definition of the coefficients $a_i$ and $b$ in
\eqref{eq:ass_op}, the computations will be rather simple once we know
which terms to neglect. To give a
systematic treatment and to have a better understanding of the approximations
we do, we will introduce non-commutative symbols for  the
differential operator algebra generated by the $\nabla_{i,k}$ and $\Ci ( U,
\op{End} \mathbb{A})$. Instead of the full algebra, we will only work with
second order operators. Everything in this section works
without assuming that $\om$ is degenerate, the dimension of $M$ could be odd
as well, but we will not insist on that. 

Let $(e_i)$ be a frame of $TM$ on an open set $U$
of $M$ and $\mathbb{A}$ be a Hermitian vector space. 
Let $\nabla_{i,k} $ be the covariant
derivation of $\Ci ( U,L^k)$  with respect to
$e_i$. For any $y \in M$, let $\nabla_{y,i}$ be the covariant derivative of $\Ci (
T_yM)$ for the connection \eqref{eq:nabla-linear} with respect to $e_i(y)$.

We say that a family  $P=(P_k : \Ci ( U , L^k \otimes \mathbb{A} ) \rightarrow \Ci ( U , L^k
\otimes\mathbb{A}), \; k \in \N )$ of differential operators belongs to
$\og_2$ if it has the form
\begin{gather} \label{eq:defin_op_order2}
P_k=  \sum_{i\leqslant j} d_{ij} \nabla_{i,k} \nabla_{j,k} + kc + \sum b_i
   \nabla_{i,k} + a 
 \end{gather}
 for some coefficients $ d_{ij}$, $c$, $b_i$, $a \in \Ci ( U,
 \op{End} \mathbb{A} )$ independent of $k$. For such a family,  we define
 $$\si_2(P)(y)  = \sum_{i \leqslant j} d_{ij} (y) \nabla_{y,i}
 \nabla_{y,j} + c(y): \Ci (T_yM, \mathbb{A}) \rightarrow \Ci (T_yM, \mathbb{A}) $$
 Similarly we define the subspaces $\og_0$ and $\og_1$ of
 $\og_2$ and the corresponding symbols as follows. Assume that  $P$
 satisfies \eqref{eq:defin_op_order2}. Then
\begin{xalignat*}{4}
  P \in \og_1 & \Leftrightarrow     d_{ij} = c = 0, && \si_1(P)(y)  = \sum b_i (y) \nabla_{y,i}\\     P \in \og_0 & \Leftrightarrow d_{ij} = c = b_i =0,  &&  \si_0(P) (y) = a(y)   
\end{xalignat*} 
The basic property we need is the following. 
\begin{lemme} \label{lem:adj_prod}
  Let $P \in \og_N$, $P' \in \og_{N'}$ with $N+N'
  \leqslant 2$. Then
  \begin{itemize}
\item $P^*:= (P_k^*)$ belongs to $\og_{N}$ and $\si_N(P^*) (y) = (\si_N(P)(y))^*$. 
\item  $PP' := (P_k P'_k) \in \og_{N+N'}$ and $\si_{N+N'} ( PP')
   (y) = \si_N(P)(y) \circ \si_{N'}( P')(y)$. 
 \end{itemize}
\end{lemme}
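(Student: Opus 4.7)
My plan is to verify both assertions by direct algebraic computation, reducing each to two matching pairs of commutation relations. On the $L^k$ side, for any frame $(e_i)$ of $TU$, the curvature of $L^k$ gives
$$[\nabla_{i,k}, \nabla_{j,k}] = \nabla_{[e_i,e_j],k} + \tfrac{k}{i}\om(e_i,e_j)\,\op{id}_{\mathbb{A}};$$
moreover $\nabla_{i,k}\,a = a\,\nabla_{i,k} + e_i(a)$ for $a\in\Ci(U,\op{End}\,\mathbb{A})$, and $\nabla_{i,k}^* = -\nabla_{i,k} - \op{div}_\mu(e_i)$. On the linearized side at $y\in U$, the vectors $e_i(y)$ are constant on $T_yM$ and the model connection $d+\tfrac{1}{i}\al_y$ of \eqref{eq:nabla-linear} has curvature $\tfrac{1}{i}\om_y$, so
$$[\nabla_{y,i}, \nabla_{y,j}] = \tfrac{1}{i}\om_y(e_i(y),e_j(y)), \quad \nabla_{y,i}\,a(y) = a(y)\,\nabla_{y,i}, \quad \nabla_{y,i}^* = -\nabla_{y,i}.$$
The decisive observation is that the factor $k$ produced by the $L^k$-curvature term ends up exactly in the $kc$-slot of \eqref{eq:defin_op_order2}, and applying $\si_2$ (i.e., evaluating coefficients at $y$ and replacing $\nabla_{i,k}$ by $\nabla_{y,i}$) converts this $kc$-slot contribution into $c(y)$, so that the $k$-curvature correction on the $L^k$ side matches the commutator $[\nabla_{y,i},\nabla_{y,j}]$ on the linearized side.

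For the product, I write $P_k$ and $P'_k$ in the canonical form \eqref{eq:defin_op_order2} and expand $P_k\circ P'_k$: I move all multiplication operators to the left through covariant derivatives using $\nabla_{i,k}a = a\nabla_{i,k}+e_i(a)$, and reorder any pair $\nabla_{j,k}\nabla_{i,k}$ with $j>i$ using the curvature commutator. The resulting expression has at most $N+N'\le 2$ covariant derivatives in its leading part, hence lies in $\og_{N+N'}$; all $k$-correction terms land in the $kc$-slot, while the corrections $\nabla_{[e_i,e_j],k}$ and $e_i(\cdot)$ feed only into the first- and zeroth-order slots, which do not enter $\si_{N+N'}$. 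Performing the identical manipulations on $\si_N(P)(y)\circ\si_{N'}(P')(y)$ using the linearized commutation relations produces matching leading coefficients $d_{ij}(y)$ and constant term $c(y)$, because $a(y)$ commutes with $\nabla_{y,i}$ and the $k$-curvature contribution on the $L^k$ side becomes exactly $\tfrac{1}{i}\om_y(e_i(y),e_j(y)) = [\nabla_{y,i},\nabla_{y,j}]$ after applying $\si_{N+N'}$.

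For the adjoint I take adjoints factor by factor using $\nabla_{i,k}^* = -\nabla_{i,k}+(\text{function})$, then reorder derivative indices via the curvature commutator to return to canonical form; the transpose of the matrix $(d_{ij})$ appears as the new leading matrix and the $c$-term is conjugated plus a reordering correction in the $kc$-slot. The same calculation on the linearized side with $\nabla_{y,i}^* = -\nabla_{y,i}$ and $[\nabla_{y,i},\nabla_{y,j}] = \tfrac{1}{i}\om_y(e_i(y),e_j(y))$ produces the matching correction on the symbol side, yielding $\si_N(P^*)(y) = \si_N(P)(y)^*$. The main obstacle is bookkeeping: one must verify that every $k$-factor produced by reordering on the $L^k$ side occupies exactly the $kc$-slot of \eqref{eq:defin_op_order2} (with no spurious $k$-factors leaking into lower-order slots), so that $\si_N$ intertwines the two algebraic structures. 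Once this is tracked carefully for each pair $(N,N')$ with $N+N'\le 2$, the lemma follows by linearity.
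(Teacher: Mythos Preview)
Your proposal is correct and follows essentially the same approach as the paper: both proofs rest on the three identities $\nabla_{i,k}^* = -\nabla_{i,k} + (\text{function})$, $\nabla_{i,k}a = a\nabla_{i,k} + e_i(a)$, and the curvature commutator $[\nabla_{i,k},\nabla_{j,k}] = \tfrac{k}{i}\om(e_i,e_j) + (\text{first order})$, together with the observation that on the linearized side the same relations hold with the lower-order corrections stripped away. Your write-up is more explicit about the bookkeeping (in particular you track the $\nabla_{[e_i,e_j],k}$ term for a general frame, which the paper suppresses), but the argument is the same.
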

Here the formal adjoints $P_k^*$ are defined with respect to any volume form $\mu$
of $U$ which is independent of $k$, whereas the adjoint of $\si_N(P)(y)$ is
defined with respect to any constant volume form of $T_yM$. 
\begin{proof} This is easily proved, let us emphasize the main points. First $\nabla_{i,k}^* = - \nabla_{i,k} + \op{div}_\mu (e_i)$,
  so $(\nabla_{i,k}^*)$ belongs to $\og_1$ and $\si_1 ( \nabla_{i,k}^*)(y)
  = - \nabla_{y,i} = \nabla_{y,i}^*$. Second $\nabla_{i,k} a = a \nabla_{i,k}
  + \mathcal{L}_{e_i} a$, so $(\nabla_{i,k} a)$ belongs to $\og_1$ and
  has symbol $\si_1( \nabla_{i,k} a)(y) = a(y) \nabla_{y,i} = \nabla_{y,i}
  a(y)$. Third
  $$ \nabla_{i,k} \nabla_{j,k} = \nabla_{j,k} \nabla_{i,k} + \tfrac{k}{i} \om
  (e_i,e_j)$$ so when $i>j$, $ (\nabla_{i,k} \nabla_{j,k})$ belongs to
  $\og_2$ and $\si_2 ( \nabla_{i,k} \nabla_{j,k})(y) =\nabla_{y,j}
  \nabla_{y,i} + \frac {1}{i} \om(e_i, e_j) (y) = \nabla_{y,i} \nabla_{y,j}$. 
\end{proof}

Notice that for any vector field $X$ of $U$, $(\nabla_X^{L^{k}})$
belongs to $\og_1$ with symbol at $y$ given by the covariant
derivative of $\Ci ( T_yM)$ with respect  to $X(y)$. Using this and  Lemma \ref{lem:adj_prod}, we deduce that $\og_N$ and $\si_N$ do not depend on the choice of the frame $(e_i)$.
Let us make the dependence with respect to $(U, \mathbb A )$ explicit, so we
write $\og_N(U, \mathbb{A})$ instead of  $\og_N$.

Using again Lemma \ref{lem:adj_prod}, we see that if $u \in \Ci (U, \op{End} \mathbb{A})$ is invertible at
each point, then for any $P \in \og_N (U, \mathbb{A})$, $uPu^{-1}$ belongs to
$\og_N(U, \mathbb{A})$ and $\si_N( uPu^{-1} ) (y)= u(y) \si_N(P)(y) u(y)^{-1}$. So we
can define $\og_N(A)$ as the space of differential operator families $(P_k
)$  such that for any $k$, $P_k$ acts on $\Ci (M, L^k \otimes A)$ and for any trivialisation $A|_U
\simeq U \times \mathbb{A}$, the local representative of $(P_k)$ belongs to
$\og_N(U,\mathbb{A})$. The corresponding symbol $\si_N(P)(y)$ is
invariantly defined as a differential operator of $\Ci ( T_yM, A_y)$.  

It is also useful to consider  differential operators from $\Ci (M ,L^k \otimes A )$ to  $\Ci ( M ,L^k
\otimes B)$ where $B$ is a second auxiliary Hermitian vector bundle. To handle
these operators, we define the subspace $\og_N(A,B)$ of
$\og_N( A \oplus B)$ consisting of the $(P_k)$ such that
for any $k$,  $\op{Im} P_k \subset \Ci ( M , L^k \otimes B)
\subset \op{Ker} P_k$. The symbol at $y$ of an
element of $\og_N( A,B)$ is a differential operator $\Ci (T_yM , A_y)
\rightarrow \Ci (T_yM , B_y)$.

Observe now that  assumption \eqref{eq:ass_op} has the following reformulation
\begin{gather} \tag{B'}
(\Delta_k) \in \og_2(A) \qquad \text{and} \qquad \si_2 (
\Delta_k)(y)  = \Delta_y^{\op{scal}} + V(y), \quad \forall y \in M.
\end{gather} 

\subsection{Magnetic Laplacian} \label{sec:magnetic-laplacian}

The simplest example of an operator satisfying condition \eqref{eq:ass_op} is the magnetic Laplacian
defined in Section \ref{sec:magnetic-laplacian-1}. So besides the line bundle $L$ with its
connection, the Riemannian metric $g$ and the section $V \in \Ci (M , \op{End}
A)$, we introduce a connection
on $A$ not necessarily preserving the Hermitian structure and a volume form
$\mu$ on $M$. Set
 $$ \Delta_k = \tfrac{1}{2}( \nabla^{L^k \otimes A} )^*   \nabla^{L^k \otimes
   A} + k V : \Ci ( M , L^k \otimes A ) \rightarrow \Ci ( M , L^k \otimes A )$$
 where the formal adjoint of $\nabla^{L^k \otimes A}$ is defined from the
 scalar product obtained by integrating pointwise scalar products against $\mu$.
 
 \begin{prop} $(\Delta_k)$ satisfies assumption \eqref{eq:ass_op}.
 \end{prop}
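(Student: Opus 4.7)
The plan is to verify the equivalent reformulation (B$'$): that $(\Delta_k) \in \og_2(A)$ with $\si_2(\Delta_k)(y) = \Delta^{\op{scal}}_y + V(y)$ for every $y$. The argument is purely local. I would fix a coordinate chart $(U, x_i)$, take the frame $e_i = \partial_{x_i}$, and pick a Hermitian trivialisation $A|_U \simeq U \times \mathbb{A}$. In this trivialisation the connection on $A$ has the form $d + \sum \alpha_i^A\, dx^i$ for smooth $\alpha_i^A \in \Ci(U, \op{End}\mathbb{A})$ (not necessarily antihermitian), and
\begin{gather*}
\nabla^{L^k \otimes A}_{e_i} = \nabla_{i,k} + \alpha_i^A.
\end{gather*}
This is manifestly of the shape \eqref{eq:defin_op_order2} with only a $b_i$- and an $a$-piece, so the family $(\nabla^{L^k \otimes A}_{e_i})$ lives in $\og_1$, with symbol $\nabla_{y,i} + \alpha_i^A(y)$.

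Next I would expand the Bochner Laplacian by the standard local formula
\begin{gather*}
(\nabla^{L^k \otimes A})^*\nabla^{L^k \otimes A} = -\sum_{i,j} g^{ij}\, \nabla^{L^k \otimes A}_{e_i}\nabla^{L^k \otimes A}_{e_j} \;+\; \sum_\ell \beta_\ell\, \nabla^{L^k \otimes A}_{e_\ell} \;+\; \gamma,
\end{gather*}
with $\beta_\ell, \gamma \in \Ci(U, \op{End}\mathbb{A})$ smooth and $k$-independent (they are produced by integration by parts against $\mu_{\op{L}}$ and depend only on $g^{ij}$, the Christoffel symbols of $g$, the divergence of the $e_\ell$, and the $\alpha_i^A$ together with their adjoints). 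Substituting $\nabla^{L^k \otimes A}_{e_i} = \nabla_{i,k} + \alpha_i^A$, adding $kV$, and repeatedly applying Lemma \ref{lem:adj_prod}—whose product rule crucially sends the reordering commutators $[\nabla_{i,k}, \nabla_{j,k}] = \frac{k}{i}\om(e_i, e_j)$ into the $kc$-slot of \eqref{eq:defin_op_order2}—one checks that $\Delta_k$ fits the template \eqref{eq:defin_op_order2}, so that $(\Delta_k) \in \og_2(A)$.

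Finally, only the $d_{ij}$- and $c$-pieces of \eqref{eq:defin_op_order2} enter the second symbol $\si_2$. The first-order $\beta_\ell \nabla^{L^k \otimes A}_{e_\ell}$ term, as well as $\gamma$ and all the cross terms involving $\alpha_i^A$, are of order $\leqslant 1$ and $k$-independent, hence drop from $\si_2$; the commutator contributions in the $kc$-slot combine with the $d_{ij}$-pieces in exactly the way that makes $\si_2(\nabla_{i,k}\nabla_{j,k})(y) = \nabla_{y,i}\nabla_{y,j}$, as already observed in the proof of Lemma \ref{lem:adj_prod}. Therefore
\begin{gather*}
\si_2(\Delta_k)(y) = -\tfrac{1}{2}\sum_{i,j} g^{ij}(y)\, \nabla_{y,i}\nabla_{y,j} \;+\; V(y).
\end{gather*}
On the flat space $T_yM$, equipped with the constant metric $g_y$, a constant volume form and the linear connection \eqref{eq:nabla-linear}, the bundle Laplacian $\tfrac{1}{2}\nabla^*\nabla$ equals precisely this right-hand side (no Christoffel or volume-divergence corrections appear because everything is constant in the linear coordinates), so $\si_2(\Delta_k)(y) = \Delta^{\op{scal}}_y + V(y)$ as required. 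The only delicate point is the bookkeeping: one must keep careful track of which of the many lower-order pieces produced by $\nabla^*\nabla$ are genuinely $k$-independent (and hence safely in $\og_1$ or $\og_0$), and check that the $\frac{k}{i}\om$-commutators generated by reordering are absorbed consistently; once this is sorted, the symbol identification follows directly from Lemma \ref{lem:adj_prod}.
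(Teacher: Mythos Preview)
Your argument is correct and rests on the same ingredient as the paper's proof, namely Lemma~\ref{lem:adj_prod} together with the local expression $\nabla^{L^k\otimes A}_{e_i} = \nabla_{i,k} + \alpha_i^A$. The paper organizes the computation slightly more efficiently: rather than expanding $(\nabla^{L^k\otimes A})^*\nabla^{L^k\otimes A}$ into its componentwise pieces and tracking the lower-order terms by hand, it observes that the full covariant derivative $\nabla^{L^k\otimes A} = \sum_i \ep(e_i^*)(\nabla_{i,k} + \gamma_i)$ lies in $\og_1(A, A\otimes T^*M)$ with symbol at $y$ equal to the linear connection $\nabla$ on $T_yM$, and then applies the adjoint and product parts of Lemma~\ref{lem:adj_prod} once to conclude $\tfrac{1}{2}(\nabla^{L^k\otimes A})^*\nabla^{L^k\otimes A} \in \og_2(A)$ with symbol $\Delta_y^{\op{scal}}$. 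This two-bundle packaging $\og_N(A,B)$ (introduced just before the proposition) is precisely what allows the paper to bypass the explicit Bochner--Weitzenb\"ock expansion you carry out; your route works, but the paper's avoids the bookkeeping you flag as ``the only delicate point.''
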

 \begin{proof} This follows from Lemma \ref{lem:adj_prod} and the fact that $(\nabla^{L^k \otimes
     A})$ belongs to $\og_1( A, A \otimes T^*M)$ with symbol at $y$
   equal to the covariant derivative $\nabla$ of $\Ci (T_yM)$ tensored with
   the identity of $A_y$. To see this, write locally 
   $$\nabla^{L^k \otimes
   A} = \sum_i \ep (e_i^*) \nabla_{e_i} ^{L^k \otimes A} = \sum_i \ep (
 e_i^*) (\nabla_{i,k} + \ga_i) $$
 where $(e_i^*)$ is the dual frame of $(e_i)$, $\ep (e_i^*)$ is the exterior product
 by $e_i^*$ and the $\ga_i \in \Ci (U, \op{End} \mathbb{A})$ are the
 coefficients of the connection one-form of $\nabla^A$ in a trivialisation
 $A|_U \simeq U \times \mathbb{A}$.
 \end{proof}

 \subsection{Holomorphic Laplacian} \label{sec:holom-lapl}

 Assume that $M$ is a complex manifold and $L$, $A$ are holomorphic
 Hermitian bundles, $L$ being positive in the sense that the curvature of its
 Chern connection if $\frac{1}{i} \om$ where $\om \in \Om^{1,1}(M)$ is
 a K\"ahler form. Equip $T^{0,1} M$ with the metric $|u |^2 = \frac{1}{i} \om (
 \con{u}, u)$, $u \in T^{0,1}M$ and let $\mu = \om^n/n!$ be the Liouville
 volume form. Define the holomorphic Laplacian
 $$ \Delta''_k = (\con{\partial}_{L^k \otimes A} )^* \con{\partial}_{L^k
   \otimes A} : \Ci ( M , L^k \otimes A) \rightarrow \Ci (M, L^k \otimes
 A). $$
By Hodge theory,  $\op{ker} \Delta''_k$ is isomorphic with the Dolbeault
cohomology space $H^0 ( L^k \otimes A)$. When $k$ is
sufficiently large, the dimension of $H^0 ( L^k \otimes A)$ is the
Riemann-Roch number $\op{RR}(L^k \otimes A)$ defined as the evaluation of the
product of the Chern
character of $L^k \otimes A$ by the Todd Class of $M$. 
  $\Delta''_k$  satisfies assumption \eqref{eq:ass_op}, which
 lead to the following description of its spectrum. 
 
 \begin{theo} \label{theo:holomorphic_Laplacian}
   For any   $\La>0$, there exist $C>0$ such that $\spec (
k^{-1}  \Delta''_k ) \cap [0, \La]$ is contained in $ \N + Ck^{-1} [-1,1] $
 For any $m \in \N$, 
   $$ \sharp \spec ( k^{-1}\Delta''_k) \cap [m-\tfrac{1}{2}, m+ \tfrac{1}{2} ]  = \op{RR} (
   L^k \otimes A \otimes \op{Sym}^m (T^{1,0}M)) ,$$ when $k$ is sufficiently
   large. 
 \end{theo}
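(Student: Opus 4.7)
My strategy is to reduce Theorem \ref{theo:holomorphic_Laplacian} to Theorems \ref{theo:intro_weyl_global} and \ref{theo:intro_proj} by first recognizing $(\Delta''_k)$ as an element of the class $\og_2(A)$ and computing its principal symbol via a Bochner--Kodaira identity. On a K\"ahler manifold the Chern connection of $L^k \otimes A$ decomposes as $\nabla = \partial + \con\partial$; for bidegree reasons the mixed terms $\partial^*\con\partial$ and $\con\partial^{\,*}\partial$ vanish on sections, so $\nabla^*\nabla = \partial^*\partial + \con\partial^{\,*}\con\partial$. The classical K\"ahler identity for $\con\partial^{\,*}\con\partial - \partial^*\partial$ on $(0,0)$-forms, together with $\Theta(L^k) = k\om/i$ and $\op{tr}_\om \om = n$, reduces to multiplication by $-nk \op{Id}_A + \mathcal{R}$ for some $k$-independent $\mathcal{R} \in \Ci(M, \op{End} A)$ gathering the curvature of $A$. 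Combining these yields
\begin{gather*}
\Delta''_k = \tfrac{1}{2}\nabla^* \nabla - \tfrac{nk}{2}\op{Id}_A + \tfrac{1}{2}\mathcal{R}.
\end{gather*}
By Lemma \ref{lem:adj_prod} this shows $(\Delta''_k) \in \og_2(A)$ with $\si_2(\Delta''_k)(y) = \Delta^{\op{scal}}_y - (n/2)\op{Id}_{A_y}$; in the language of Section \ref{sec:class-magn-lapl}, assumption (B') holds with $V(y) = -(n/2)\op{Id}_{A_y}$.

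Since $\om$ is the K\"ahler form associated to $g$, one has $j_B = j$ and $B_i(y) = 1$ for every $y$ and $i$, so formula \eqref{eq:si_y} gives $\Si_y = \{|\al| + n/2 - n/2 : \al \in \N^n\} = \N$ and hence $\Si = \N$. The $m$-eigenspace of $\PP_y$ is $\op{span}_{\C}\{\con z^\al : |\al| = m\} \otimes A_y$, which identifies canonically with $\op{Sym}^m((T^{0,1}_yM)^*) \otimes A_y$. The Hermitian metric $h$ on $T^{1,0}M$ induces a smooth $\C$-linear bundle isomorphism $(T^{0,1}M)^* \cong T^{1,0}M$ sending the coordinate $\con z_i$ to $u_i$, so the bundle $F$ appearing in Theorem \ref{theo:intro_weyl_global} applied to the interval $[m-\tfrac{1}{2}, m+\tfrac{1}{2}]$ is isomorphic as a smooth complex vector bundle to $\op{Sym}^m(T^{1,0}M) \otimes A$. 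The endpoints $m \pm \tfrac{1}{2}$ being in $\R \setminus \N = \R \setminus \Si$, that theorem yields the Riemann--Roch formula of part (2) for $k$ sufficiently large.

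For the quantitative localization $\spec(k^{-1}\Delta''_k) \cap [0, \La] \subset \N + Ck^{-1}[-1,1]$ of part (1), I would invoke Theorem \ref{theo:intro_proj}: the spectral projector $\Pi_{m,k} := 1_{[m-\frac{1}{2}, m+\frac{1}{2}]}(k^{-1}\Delta''_k)$ lies in $\lag(A)$ with symbol $1_{\{m\}}(\PP_\cdot)$, and $k^{-1}\Delta''_k \Pi_{m,k}$ has symbol $\PP_\cdot \, 1_{\{m\}}(\PP_\cdot) = m \cdot 1_{\{m\}}(\PP_\cdot)$. Consequently $(k^{-1}\Delta''_k - m)\Pi_{m,k}$ has vanishing principal symbol, which by the norm-symbol correspondence of $\lag(A)$ yields the operator norm bound $O(k^{-1/2})$. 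To upgrade this to the sharper $O(k^{-1})$, I would use the Toeplitz/subprincipal-symbol refinement of Section \ref{sec:glob-spectr-estim}, exploiting two features of the present situation: the cluster eigenvalue $m$ is constant in $y$ (so the first subprincipal correction coming from the variation of the symbol vanishes), and the Bochner--Kodaira identity above contains no half-classical (order-one covariant derivative) contribution. Summing the resulting bound over the finitely many relevant $m \in \N \cap [0, \La]$ then gives part (1). The main technical obstacle lies precisely in this last refinement past the leading symbol of $\lag(A)$; the earlier steps are a clean bookkeeping of standard K\"ahler identities coupled with the general framework.
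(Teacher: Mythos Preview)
Your reduction to assumption (B') with $V=-\tfrac{n}{2}\op{Id}_A$ is correct, and doing it via the Bochner--Kodaira identity is a legitimate alternative to the paper's route, which computes $\si_2(\Delta''_k)(y)$ directly by writing $\con\partial_{L^k\otimes A}\in\og_1(A,A\otimes(T^*M)^{0,1})$ and applying Lemma~\ref{lem:adj_prod}; both give $\si_2(\Delta''_k)(y)=-\sum_i\nabla_{u_i}\nabla_{\con u_i}=\Delta^{\op{scal}}_y-\tfrac{n}{2}$. Your identification $\Si=\N$ and of the bundle $F$ with $A\otimes\op{Sym}^m(T^{1,0}M)$ is also fine, and the Riemann--Roch count then follows exactly as in the paper from Corollary~\ref{cor:glob-spectr-estim} (equivalently Theorem~\ref{theo:intro_weyl_global}).

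The gap is in your argument for the sharp $O(k^{-1})$ width. The mechanism you propose---constancy of the cluster eigenvalue $m$ in $y$, plus absence of a first-order term in the Bochner--Kodaira formula---does not by itself kill the $\si_1$-symbol of $(k^{-1}\Delta''_k-m)\Pi_{m,k}$; that subprincipal symbol is computed inside $\lag(A)$, where the product expansion generates half-integer contributions regardless of how clean the differential operator looks. The paper's mechanism is different and is recorded in Remark~\ref{rem:improvment}: one uses that $(\Pi_{m,k})$ and $(k^{-1}\Delta''_k\Pi_{m,k})$ lie in $\lag^{+}(A)$, so the $\si_1$-symbol of a Toeplitz operator in $\lag^+_1(A)$ is \emph{odd} with respect to the splitting $\D(TM)=\D^+(TM)\oplus\D^-(TM)$. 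Since here $F_y=\op{span}\{\con z^\al:|\al|=m\}\otimes A_y\subset\D_m(T_yM)\otimes A_y$ has definite parity $(-1)^m$, any odd endomorphism of $F_y$ vanishes, forcing the Toeplitz operator into $\lag^+_2(A)$ and yielding the $O(k^{-1})$ bound. You should replace your heuristic with this parity argument.
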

 Notice that the first eigenvalue cluster is degenerate in the sense that $ \spec ( \Delta''_k) \cap [0,\frac{1}{2} ] \subset \{0\}$ when $k$ is
        sufficiently large. 
\begin{proof} $\con{\partial}_{L^k \otimes A}$ belongs to $\og_1 ( A, A \otimes
 (T^*M)^{0,1})$ and its symbol at $y$  is the $(0,1)$ component of the
 connection  $\nabla$ defined in
 \eqref{eq:nabla-linear}. Using the same notations $(u_i)$ and $(z_i)$ as in Section
 \ref{sec:laplacian-linear},  $\nabla^{0,1} = \sum \ep ( d\con{z}_{i} ) \otimes
 \nabla_{\con{u}_i}$. Since the adjoint of $\ep (d\con{z}_i)$ is the interior product by
 $\con{u}_i$, $\ep (d\con{z}_i) ^* \ep (d \con {z}_i) =1$ so that
 $$\si_2 ( \Delta''_k) (y) = - \textstyle{\sum}_i   \nabla_{u_i} \nabla_{\con{u}_i} $$
 so $\Delta''_k$ satisfies assumption \eqref{eq:ass_op} with $V(y) = -
 \frac{n}{2}$ and $\Si_y = \N$. The
 results follow now from Corollary \ref{cor:glob-spectr-estim} with a
 $k^{-1}$ instead of $k^{-\frac{1}{2}}$ by Remark \ref{rem:improvment}. 
\end{proof}
Similarly we can consider the Laplacian acting on $(0,q)$-forms and prove the
same result where $\N$ is replaced by $q+ \N$ and the number of eigenvalues in
$q+m + [-\frac{1}{2}, \frac{1}{2}] $ is the Riemann-Roch number of $L^k
\otimes A \otimes \wedge ^{0,q} ( T^*M) \otimes \op{Sym}^m ( T^{1,0}M)$. 

We can also generalise this to the case where the complex structure is not
integrable. So assume that $(M, \om)$ is a symplectic manifold with a
compatible almost-complex structure $j$, that $L \rightarrow M$ is a Hermitian
line bundle with a connection of curvature $\frac{1}{i} \om$ and $A$ a
Hermitian vector bundle with a connection.  Then Theorem
\ref{theo:holomorphic_Laplacian} holds with the operator
$$ \Delta''_k = (( \nabla^{L^k \otimes A})^{(0,1)})^* (\nabla^{L^k \otimes A})^{(0,1)} : \Ci (L^k
\otimes A) \rightarrow \Ci ( L^k \otimes A)$$
and the proof is exactly the same. However, it is no longer true that the
first eigenvalue cluster is non degenerate.
Using Dirac operators, one can generalise the previous result and still have
the degeneracy of the first cluster, as explained in the next section. 

\subsection{Semiclassical Dirac operators} 

In this section, $(M, \om , j)$ is a symplectic manifold with an almost complex structure, $(L, \nabla) $ is a Hermitian line bundle on $M$  with a
connection having curvature $\frac{1}{i} \om$ and $A$ an auxiliary Hermitian
vector bundle.

Let $S = \wedge ^{0, \bullet } T^*M$ be
the spinor bundle and $S^+$, $S^-$ be the subbundles of even (resp. odd) forms. 
For any $y \in M$, extend the covariant derivative $\nabla$ defined in 
\eqref{eq:nabla-linear} to $\Om^\bullet (T_yM)$ in
the usual way and denote by $\nabla^{0,1}$ the restriction of its $(0,1)$
component to $\Om^{0, \bullet} (T_yM) = \Ci (T_yM \otimes S_y)$.   
\begin{defin} A semi-classical Dirac operator is a family $(D_k) \in \og_1 (A \otimes S)$ with symbol
  \begin{gather*}   \si_1 (D_k) (y)  = \nabla^{0,1} + (\nabla^{0,1})^* :
    \Om^{0, \bullet} (T_yM)  \rightarrow \Om^{0, \bullet} (T_yM)
    , \qquad \forall y \in M 
  \end{gather*}
  such that for any $k$, $D_k$ is formally self-adjoint and odd. 
\end{defin}
Such an operator can be constructed as follows: introduce a connection on $S$
preserving $S^+$ and $S^-$, a connection on $A$ and set
$$ D_k = \sum_i  \ep (\con{\theta}_i) \nabla_{\con{u}_i}^{L^k \otimes A
  \otimes S}  + \bigl( \ep (\con{\theta}_i) \nabla_{\con{u}_i}^{L^k \otimes A
  \otimes S} \bigr)^*   $$ 
where $(u_i)$ is any orthonormal frame of $T^{1,0}M$, $(\theta_i)$ is the
dual frame of $(T^*M)^{1,0}$ and the exterior product $\ep (\con{\theta}_i)$ acts on $S$. Another
example is provided by spin-c Dirac operators, cf.  \cite{Du}, \cite[Section 1.3]{MaMa}. Observe as
well that the semi-classical Dirac operator is unique up to a self-adjoint odd
operator of $\og_0 ( A \otimes S)$. 
We denote by $D^\pm_k: \Ci ( L^k \otimes A \otimes S^{\pm}) \rightarrow \Ci (
L^k \otimes A \otimes S^{\mp}) $ the restrictions of $D_k$ and observe that
$D^-_k$ is the formal adjoint of $D^+_k$. 
\begin{theo} \label{theo:semicl-dirac-oper}
  Let $(D_k)$ be a semi-classical Dirac operator. Then the operator  $\Delta_k =
  D_k^- D_k^+$ satisfies  
  \begin{enumerate}
    \item  for any $\La >0$, there exists $C>0$ such that $\spec ( k^{-1} \Delta_k) \cap
 [0,\La]$
 is contained in $\N + Ck^{-\frac{1}{2}} [-1,1]$. 
\item $\spec (k^{-1} \Delta_k) \cap [0,\frac{1}{2}] \subset \{0 \}$ and
  $\op{ker} \Delta_k $ has dimension $\op{RR} ( L^k
  \otimes A)$ when $k$ is sufficiently large.
\item for any $m\in \N$, when $k$ is sufficiently large, 
  $$\sharp \spec ( k^{-1} \Delta_k) \cap [m -\tfrac{1}{2},
  m+ \tfrac{1}{2}] = \op{RR} ( L^k \otimes A_m) $$
  where $A_m = \bigoplus_{(\ell,p)} A \otimes \op{Sym}^\ell ( T^{1,0}M) \otimes
  \wedge^{2p} (T^{1,0}M) $, the sum being over the $(\ell, p) \in \N^2$ such
  that $\ell + 2p =m$ and $p\leqslant n$.
  \end{enumerate}
\end{theo}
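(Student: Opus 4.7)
The plan is to apply the general counting machinery of Theorems \ref{theo:intro_spec} and \ref{theo:intro_weyl_global} to $\Delta_k = D_k^- D_k^+$ acting on $\Ci(M, L^k \otimes A \otimes S^+)$. The first step is to verify that $(\Delta_k)$ satisfies assumption (B'). Since $(D_k) \in \og_1(A \otimes S)$ is odd, its restrictions $(D_k^\pm)$ belong to $\og_1(A \otimes S^\pm, A \otimes S^\mp)$, so Lemma \ref{lem:adj_prod} gives $(\Delta_k) \in \og_2(A\otimes S^+)$ with $\si_2(\Delta_k)(y) = \si_1(D_k^-)(y) \circ \si_1(D_k^+)(y)$. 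The model symbol $\nabla^{0,1} + (\nabla^{0,1})^*$ is self-adjoint on $\Om^{0,\bullet}(T_yM) \otimes A_y$, and $(\nabla^{0,1})^2 = 0$ since $\om_y$ is of type $(1,1)$; squaring and restricting to even forms collapses to the flat Kodaira Laplacian $\nabla^{0,1}(\nabla^{0,1})^* + (\nabla^{0,1})^*\nabla^{0,1}$. A standard Fock-space computation using the canonical (anti)commutation relations between $\ac_i, \ac_i^\dagger$ and the exterior/interior products by $d\con{z}_i$ and $\con{u}_i$ yields the Weitzenb\"ock identity $\si_2(\Delta_k)(y) = \Delta_y^{\op{scal}} + V(y)$, where $V(y)$ acts as multiplication by $2p - \tfrac{n}{2}$ on the summand $A_y \otimes \wedge^{0,2p}(T_y^*M)$ of $A_y \otimes S_y^+$. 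The $-n/2$ shift cancels the zero-point energy of $\Delta_y^{\op{scal}}$.

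Since $j_B = j$, all $B_i(y) = 1$, so $\PP_y$ preserves each summand $\D(T_yM) \otimes A_y \otimes \wedge^{0,2p}(T_y^*M)$ and acts on $\con{z}^\al$ by multiplication by $|\al| + 2p$. Hence $\Si_y = \N$ for every $y$, $\Si = \N$, and the $m$-eigenspace of $\PP_y$ is
\[ F_y = \bigoplus_{\ell + 2p = m} \D_{\ell}(T_yM) \otimes A_y \otimes \wedge^{0,2p}(T_y^*M). \]
Under the canonical complex-linear identifications $\D_{\ell}(T_yM) \cong \op{Sym}^\ell(T_y^{1,0}M)$ and $\wedge^{0,2p}(T_y^*M) \cong \wedge^{2p}(T_y^{1,0}M)$ induced by $\om$, which are smooth in $y$ by Lemma \ref{lem:smooth}, this is precisely the fiber of $A_m$. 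Part (1) now follows from Theorem \ref{theo:intro_spec} with $\Si = \N$, and part (3) follows from Theorem \ref{theo:intro_weyl_global} applied to $[m-\tfrac12, m+\tfrac12]$.

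For part (2), I still need to upgrade the $0$-cluster from being contained in $[0, Ck^{-1/2}]$ to being exactly $\{0\}$. Here I invoke supersymmetry: the operator $D_k^+ D_k^-$ on $\Ci(M, L^k \otimes A \otimes S^-)$ satisfies (B') by the same symbol calculation, and its model $\PP_y$ restricted to $\D(T_yM) \otimes A_y \otimes \wedge^{0,2p+1}(T_y^*M)$ has eigenvalues $|\al| + 2p + 1 \geqslant 1$. Hence its $\Si$ is contained in $\N \setminus \{0\}$, so $[-\tfrac12, \tfrac12] \cap \Si = \emptyset$ and Theorem \ref{theo:intro_weyl_global} yields $\spec(k^{-1} D_k^+ D_k^-) \cap [-\tfrac12, \tfrac12] = \emptyset$ for $k$ large. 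Combined with the standard fact that $D_k^+$ implements a bijection between the $\la$-eigenspaces of $D_k^- D_k^+$ and $D_k^+ D_k^-$ for any $\la > 0$, this forces $\spec(k^{-1}\Delta_k) \cap (0,\tfrac12] = \emptyset$, so the whole first cluster consists of the kernel. The counting formula (3) with $m=0$ and $A_0 = A$ then gives $\dim \ker \Delta_k = \op{RR}(L^k \otimes A)$.

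The main obstacle will be the first step, namely pinning down the precise constant $V(y) = 2p - \tfrac n2$ in the Weitzenb\"ock formula. The algebra is elementary but the sign and normalization of the $-n/2$ shift have to match the convention of $\Delta_y^{\op{scal}} = \sum B_i(y)(\ac_i^\dagger \ac_i + \tfrac12)$ exactly; otherwise the gap in $(0, \tfrac12)$ collapses and both the degeneracy of the first cluster and the $A_0 = A$ identification in part (2) fail. A secondary delicate point is verifying that the fiberwise identification $F \cong A_m$ extends to a genuine isomorphism of smooth complex vector bundles, so that the Riemann-Roch numbers agree; this is ensured by Lemma \ref{lem:smooth} together with the naturality of the $\om$-induced identifications.
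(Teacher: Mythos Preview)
Your proof is correct and follows essentially the same route as the paper: verify assumption (B') by computing the Weitzenb\"ock identity $\si_2(D_k^2)(y)=\Delta_y^{\op{scal}}-\tfrac{n}{2}+\op{N}_y$ (the paper does this by a direct commutator computation rather than invoking $(\nabla^{0,1})^2=0$, but the content is identical), read off $\Si=\N$, apply the general gap and counting results, and for part (2) use the supersymmetric comparison with $D_k^+D_k^-$ on odd forms. Your explicit remark that $j_B=j$ forces $B_i\equiv 1$ is a point the paper leaves implicit in the setup $(M,\om,j)$; it is worth keeping.
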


\begin{proof} As in section \ref{sec:laplacian-linear}, let $(u_i)$ be an orthonormal basis 
  of $T_y^{1,0}M$ and $(z_i)$ be the associated linear complex coordinates. We
  have $\nabla_{\con u_i}^* = -\nabla_{u_i}$, $\ep (d
  \con{z}_i)^* = \iota ( \con{u}_i)$ so that
   $$\si_1(D_k)(y) = \sum \ep
  (d\con{z}_i)  \nabla_{\con{u_i}} - \iota( \con{u}_i) \otimes \nabla_{u_i}. $$ 
  A standard computation using that $\nabla_{u_i}$, $\nabla_{\con{u}_i}$
  commute with $\ep (d\con{z}_j)$, $\iota ( \con{u}_j)$ and  
  $ [\nabla_{u_i}, \nabla_{u_j} ] = [\nabla_{\con{u}_i}, \nabla_{\con{u}_j} ]
  =0$, $ [\nabla_{u_i} , \nabla_{\con{u}_j}] =
  \delta_{ij}$ leads to
$$  \si_2(D_k^2)(y) = \sum (-  \nabla_{u_i}\nabla_{\con{u}_i} +  \ep (
d\con{z}_i) \iota(
  \con{u}_i) ) = \Delta^{\op{scal} }_y - \tfrac{n}{2} + \op{N}_y$$
  where $\op{N}_y$ is the number operator of $S_y$, that is $N_y \al =
 ( \op{deg} \al ) \al $. Restricting to $S^+$, we deduce that $(\Delta_k)$
 satisfies assumption \eqref{eq:ass_op} with $V(y) = -\frac{n}{2} + \op{N}_y$.  
 So $\Si_y = \N$ and the first assertion of the theorem follows from the second
 part of Corollary \ref{cor:glob-spectr-estim}.

 In the same way, $(D_k^+ D_k^-)$ has the form \eqref{eq:ass_op} with $V(y) = -\frac{n}{2} +
 \op{N}_y$ as well, but the number operator  takes odd value on
 $S^-$. Thus  $$\spec (k^{-1} D_k^+ D_k^-) \subset [1 -Ck^{-\frac{1}{2}}, \infty[$$ for
 some positive $C$. Since for any $\la \neq 0$, $D_k^+$ is an
 isomorphism between $ \op{ker} ( D_k^-D_k^+ -\la )$ and $\op{ker} ( D_k^+ D_k
 ^- -\la)$, this proves that $ \spec (k^{-1} \Delta_k) \cap ]0, \frac{1}{2}] $
 is empty when $k$ is sufficiently large and the first part of the second assertion follows.  

 The second part of the second assertion and the third assertion follow from
 Corollary \ref{cor:glob-spectr-estim}. Indeed, for $V(y) = -\frac{n}{2} +
 \op{N}_y$ acting on $A_y \otimes S^+_y$,  the bundle $F$ with fiber
 $ F_y =  \ker (
 \PP_y - m ) $ is isomorphic with $A_m$ as a complex vector bundle.
\end{proof}

\section{Spectral estimates}  \label{sec:spectral-estimates}

Let $(\Delta_k : \Ci ( M , L^k \otimes A) \rightarrow \Ci ( M , L^k \otimes
A))$ be a differential operator family satisfying \eqref{eq:ass_op}. We assume
that the curvature $\frac{1}{i} \om $ is non-degenerate.
We assume as well that $\Delta_k$ is formally self-adjoint where the scalar
product of section of $\Ci ( M, L^k \otimes A)$ is defined from the measure $\mu =\om^n/n!$.

For any $y \in M$, by Darboux Lemma, there exists a coordinate system
$(U,x_i)$ of $M$
centered at $y$ such that $\om$ is constant in these coordinates, that is $ \om = \frac{1}{2} \sum
\om_{ij} dx_i \wedge dx_j$ with $\om_{ij} = \om ( \partial_{x_i}, \partial_{x_j})$ constant functions. We identify $U$
with a neighborhood of the origin of $T_y M$ through these coordinates. We
assume that this neighborhood is convex. 

Introduce a unitary section $F_y $ of $L \rightarrow U$ such that for any
$\xi \in U$, $F_y$ is flat on the segment $[0,\xi]$. Then
\begin{gather} \label{eq:nabla-f_y}
 \nabla F_y = \tfrac{1}{2i} \textstyle{\sum}_{i,j}  \om _{i,j} x_i dx_j \otimes
 F_y 
\end{gather}
Indeed $\nabla F_y =  \frac{1}{i} \al \otimes F_y$ with $\al$ satisfying $d \al = \om$ and $\int_{[0,\xi]} \al = 0 $ for any $\xi \in
U$. We easily see that these conditions determine a unique $\al$ and that
they are satisfied by $\al = \frac{1}{2} \sum \om_{ij} x_i dx_j$. 

So trivialising $L$ on $U$ by using this frame $F_y$, $L|_U \simeq U \times \C$
and $\nabla $ becomes the linear connection defined in \eqref{eq:nabla-linear}.
Moreover trivialising $L^k$ on $U$ with $F^k_y$,  the covariant derivative
$\nabla_{j,k}$ of $L^k$ with respect to $\partial_{x_j}$ is
\begin{gather} \label{eq:nabl-i-k-linear}
\nabla_{j,k} = \partial_{x_j} + \tfrac{ik}{2}  \textstyle{\sum}_i  \om _{i,j}
x_i 
\end{gather}
Now introduce the Laplacian $\Delta_{y,k}$ of $\Ci ( T_yM, A_y)$ associated to this
covariant derivative, the constant metric $g_y$ of $T_yM$ and the constant
potential $kV(y)$, that is 
\begin{gather}  \Delta_{y,k} = - \tfrac{1}{2} \sum g^{ij}_y \nabla_{i,k} \nabla_{j,k} +
kV(y). 
\end{gather}
For $k=1$, we recover the Laplacian $\Delta_y$ defined in \eqref{eq:lap_model}.

Introduce a trivialisation of the auxiliary vector bundle $A|_U = U \times
A_y$ so that $\Ci ( U, L^k \otimes A) \simeq \Ci (U, A_y)$.
Then  assumption \eqref{eq:ass_op} tells us  that 
\begin{gather} \label{eq:main_observation}
\Delta_k - \Delta_{y,k} = \sum_{i,j}  a_{ij} \nabla_{i,k} \nabla_{j,k} + \sum_i a_i
\nabla_{i,k} + k c + b 
\end{gather}
where  $a_{ij}= - \frac{1}{2}g^{ij} + \frac{1}{2}g^{ij}_y$ and $c = V - V(y)$
are both equal to zero at
the origin $y$. The identity \eqref{eq:main_observation} will be used later to
compare the spectrums of $\Delta_k$ and $\Delta_{y,k}$, cf. the proofs of
Proposition \ref{prop:peaked-sections} and Lemma \ref{lm:main-computation}.

Before that, let us compute the spectrum of $\Delta_{y,k}$.  The Laplacian $k^{-1} \Delta_{y,k}$ is unitarily
conjugated to $\Delta_y$. Indeed,
introduce the rescaling map
\begin{gather} \label{eq:rescaling_map}
S_k: \Ci ( T_y M, A_y) \rightarrow \Ci (  T_y M, A_y), \qquad S_k (f) ( x) =
k^{\frac{n}{2}} f ( k^{\frac{1}{2}} x).
\end{gather}
Then, from the formula  \eqref{eq:nabl-i-k-linear}, we easily check that 
\begin{gather} \label{eq:rescale}
k^{\frac{1}{2}} S_k
\nabla_i = \nabla_{i,k} S_k, \qquad k^{-1} \Delta_{y,k} S_k =  S_k \Delta_y .
\end{gather}
Consequently, the spectrum of $k^{-1} \Delta_{y,k}$ is $\Si_y$
for any $k$.

\subsection{Peaked sections} \label{sec:peaked-sections}

As above, we identify a neighborhood $U$ of $y$ with a
neighborhood of the origin in $T_yM$ through Darboux coordinates, we introduce
the frame $F_y$ of $L$ on $U$ with covariant derivative given by
\eqref{eq:nabla-f_y}, and we work with a trivialisation $A|_U \simeq U \times A_y$. 
Choose a function $\psi \in \Ci_0(U, \R)$ such that $\psi =1$ on a
neighborhood of $y$. Then to any polynomial $f \in \pol ( T_yM) \otimes A_y$, we associate
the smooth section $\Phi_k (f)$ of $L^k \otimes A$ defined on $U$ by 
\begin{gather} \label{eq:peaked_Section}
  \Phi_k (f) (\xi  ) =  k^{\frac{n}{2} } F^k _y (\xi) e^{ - \frac{k}{4}
  |\xi|_y^2 } f ( k^{\frac{1}{2}} \xi) \psi ( \xi) 
\end{gather}
and equal to $0$ on $M \setminus U$. 
\begin{prop} \label{prop:peaked-sections}
  We have 
  \begin{enumerate} 
\item 
  $ \| \Phi_k(f) \|^2 = \int_{T_yM} e^{-\frac{1}{2} |\xi|_y^2 } | f (\xi) |^2
  d\mu_y (\xi)   + \bigo ( e^{-C/k})   $ with $\mu_y = \om_y^n/n!$ the
  Liouville form of $T_yM$,
  \item 
 $ k^{-1} \Delta_k  \Phi_k(f)  = \Phi_k ( g) + \bigo (
 k^{-\frac{1}{2}})$ with $g  = \PPP_y (
  f  )$.
\end{enumerate}
\end{prop}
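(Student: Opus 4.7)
The plan for both parts is to work in a Darboux chart centered at $y$ and exploit the rescaling identity \eqref{eq:rescale}.

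For part (1), I would insert the definition of $\Phi_k(f)$ into $\|\Phi_k(f)\|^2$, use that $\mu$ coincides with the constant Liouville form $\mu_y$ in Darboux coordinates, and change variables $\eta = k^{1/2}\xi$ to obtain
\[
\|\Phi_k(f)\|^2 = \int_{k^{1/2}U} e^{-|\eta|_y^2/2}|f(\eta)|^2\,\psi(k^{-1/2}\eta)^2\,d\mu_y(\eta).
\]
Since $\psi \equiv 1$ near $y$, $\psi(k^{-1/2}\eta)$ differs from $1$ only where $|\eta| \geqslant c\, k^{1/2}$ for some $c>0$, so extending the integration to $T_yM$ and replacing $\psi^2$ by $1$ costs only an exponentially small error in $k$ from the Gaussian tail.

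For part (2), the starting point is the local expansion \eqref{eq:main_observation}: in the trivializations of $L|_U$ by $F_y$ and $A|_U$ by its frame, we have $\Delta_k = \Delta_{y,k} + R_k$ with $R_k = \sum a_{ij}\nabla_{i,k}\nabla_{j,k} + \sum a_i\nabla_{i,k} + kc + b$, where $a_{ij}(y)=c(y)=0$. I would handle the two pieces separately. For the model operator $\Delta_{y,k}$, observe that $\Phi_k(f) = \psi\cdot S_k(e^{-|\cdot|_y^2/4}f)$ in these trivializations; the identity $k^{-1}\Delta_{y,k}S_k = S_k\Delta_y$ of \eqref{eq:rescale}, combined with $\Delta_y(f\cdot e^{-|\xi|_y^2/4}) = (\PPP_y f)\cdot e^{-|\xi|_y^2/4}$ coming from \eqref{eq:def_PPP}, gives
\[
k^{-1}\Delta_{y,k}\bigl[S_k(e^{-|\cdot|_y^2/4}f)\bigr] = S_k\bigl[e^{-|\cdot|_y^2/4}\PPP_y f\bigr],
\]
which coincides with $\Phi_k(\PPP_y f)$ modulo the cutoff $\psi$. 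The commutator $[\Delta_{y,k},\psi]$ is supported on $\{d\psi\neq 0\}$, bounded away from $y$; concentration of the rescaled Gaussian then forces this contribution to be $\bigo(e^{-Ck})$.

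The remaining task is to show $\|k^{-1}R_k\Phi_k(f)\| = \bigo(k^{-1/2})$. Each covariant derivative $\nabla_{i,k}$ applied to $\Phi_k(f)$ produces, via \eqref{eq:rescale}, a factor $k^{1/2}$ times $S_k$ of a polynomial-times-Gaussian, while each Taylor-vanishing coefficient $a_{ij}(\xi)$ or $c(\xi)$, being $\bigo(|\xi|)$, contributes an extra factor $|\xi| = k^{-1/2}|k^{1/2}\xi|$ that is absorbed into a Schwartz factor of $\eta = k^{1/2}\xi$. Counting powers of $k$: $k^{-1}\cdot a_{ij}\nabla_{i,k}\nabla_{j,k}$ has weight $k^{-1}\cdot k^{-1/2}\cdot k = k^{-1/2}$, the terms $k^{-1}a_i\nabla_{i,k}$ and $k^{-1}\cdot kc$ both have weight $k^{-1/2}$, and $k^{-1}b$ has weight $k^{-1}$. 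The \emph{main obstacle} is making this bookkeeping uniform: I would verify that $k^{-1}R_k\Phi_k(f)(\xi)$ equals $k^{n/2}e^{-k|\xi|_y^2/4}$ times a function which, after rescaling to $\eta$, is Schwartz in $\eta$ with uniform magnitude $\bigo(k^{-1/2})$, and then invoke a standard Schwartz-type $L^2$ estimate to conclude.
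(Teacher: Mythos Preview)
Your proposal is correct and follows essentially the same approach as the paper: both parts rest on writing $\Phi_k(f)=\psi\,S_k(e^{-|\cdot|_y^2/4}f)$ in Darboux coordinates, using the rescaling identity \eqref{eq:rescale}, and controlling the error terms by the Gaussian-weighted integral estimate (your change of variables $\eta=k^{1/2}\xi$ is exactly the paper's $I_k(\tilde\psi)$ device). The only organizational difference is that the paper commutes $\psi$ through $\Delta_k$ at the outset, writing $\Delta_k\psi=\psi(\Delta_{y,k}+a_{ij}\nabla_{i,k}\nabla_{j,k}+\tilde b_i\nabla_{i,k}+kc+\tilde c)$ with modified zeroth- and first-order coefficients, which absorbs all cutoff-derivative terms at once; in your version you would also need to note that when $R_k$ hits $\Phi_k(f)$ the derivatives landing on $\psi$ are again supported away from $y$ and hence exponentially small, but this is immediate by the same Gaussian-tail argument you already use for $[\Delta_{y,k},\psi]$.
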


The peaked sections of \cite{oimpart1}  are defined without using the Darboux
coordinates, and for this reason the $\bigo ( e^{-k/C})$ in the norm estimate
is replaced by a $\bigo ( k^{-\frac{1}{2}})$. Actually, the Darboux
coordinates are not essential in this subsection, they only simplify slightly
some estimates, whereas in Sections \ref{sec:local-appr-resolv}, \ref{sec:globalisation} it will be necessary to use them. 

\begin{proof}  $\Phi_k (f)$ being supported in $U$, we can view it as  a
  function of $T_yM$, so  $$\Phi_k (f) =\psi  S_k (sf)  $$ where $s( \xi ) = e^{-\frac{1}{4}
    |\xi|_y^2}$ as in Section \ref{sec:laplacian-linear} and $S_k$ is the rescaling map \eqref{eq:rescaling_map}. Since we work with Darboux coordinate, the volume
  form $\mu$ of $M$ coincide on $U$ with $\mu_y$. So
  $$\| \Phi_k (f) \| ^2 = \int_{T_yM} |S_k ( sf)|^2 \psi^2 \, d
  \mu_y.$$  
We
  will need several times to estimate an integral having the form 
$$ I_k (\wt{\psi} ) = \int_{T_yM} |S_k ( sf)|^2 \wt{\psi} \, d \mu_y = k^{n}
\int_{T_yM} e^{-\frac{k}{2} |\xi|_y^2 } |f( k^{\frac{1}{2} } \xi) |^2 \, \widetilde{\psi} (
\xi )  \,d \mu_y (\xi)$$
with $\wt{\psi} \in \Ci (  T_yM)$ satisfying  $\wt{\psi} ( \xi ) = \bigo (
|\xi|^m)$ on $T_yM$ for $m \geqslant 0$. We claim that    $I_k ( \wt{\psi} ) = \bigo
( k^{-\frac{m}{2}})$ and in the case where $\wt{\psi} =0$ on a neighborhood of
the origin,   $I_k ( \wt{\psi}) = \bigo ( e^{-k/C})$
for some $C>0$.

The first claim follows from the change of variable
$\sqrt k \xi = \xi'$.  For the second one, we use that $e^{- \frac{k}{2}
  |\xi|_y^2} \wt {\psi} (\xi) = \bigo ( e^{- k/C} |\xi|^m e^{-\frac{k}{4} |\xi|_y^2})$
and do the same change of variable. 

The first assertion of the proposition is an immediate consequence of the
second claim with $\wt { \psi} = 1 - \psi^2$. For the second assertion, we
start from \eqref{eq:main_observation} and using that $[\nabla_{i,k}, \psi ] =
\partial_{x_i} \psi$ repetitively, we obtain that 
\begin{gather} \label{eq:delta_k-psi-=} 
\Delta_k  \psi = \psi \bigl ( \Delta_{y,k} + a_{ij} \nabla_i^k \nabla_j^k +
\tilde{b}_i \nabla_i  + k c + \tilde{c} )
\end{gather}
where $a_{ij}$, $c$ are the same functions as in \eqref{eq:main_observation},
$\tilde{b}_i$ and $\tilde{c}$ do not depend on $k$.  

Now, by \eqref{eq:delta_k-psi-=}, $\Delta_k ( \psi S_k ( sf))$ is a sum of
five terms, the first one being 
$$ \psi \Delta_{y,k} S_k ( fs) = k \psi S_k (
\Delta_y (sf)) = k \Phi_k (g), \quad \text{ with } sg =
\Delta_y( sf) $$
by \eqref{eq:rescale}.  
We will prove that the four other terms are in $\bigo ( k^{\frac{1}{2}})$,
which will conclude the proof.

Each time, we will apply the preliminary integral estimate with the
convenient function $\wt{\psi}$. First $|\psi \tilde{c}|$ being bounded,  $\psi  \tilde{c}\, S_k (sf)  = \bigo (1)$.
Second, $c$ vanishing at the origin, $|\psi(\xi
) c (\xi) |^2 = \bigo ( |\xi|^2)$ so that $\psi c\, S_k ( sf) = \bigo (
k^{-\frac{1}{2}})$. Third, by \eqref{eq:rescale},
$$\nabla_{i,k} S_k ( sf ) =
k^{\frac{1}{2}} S_k ( \nabla_i (sf))= k^{\frac{1}{2}} S_k ( sf_i)$$ with a new
polynomial $f_i$, and since  $\psi \tilde b_i$ is bounded, it comes that 
$$\psi  \tilde b_i\,
\nabla_{i,k} S_k (sf) = \bigo ( k^{\frac{1}{2}}).$$  Similarly, 
$ \nabla_{i,k}
\nabla_{j,k} S_k (sf) = k S_k ( s f_{ij})$
with new polynomials $f_{ij}$, and
$a_{ij}$ vanishing at the origin, we obtain $$\psi a_{ij} \nabla_{i,k}
\nabla_{j,k} S_k (sf) = \bigo ( k^{\frac{1}{2}}) $$
as was to be proved. 
\end{proof}

\begin{theo} \label{theo:spectr2}
  Let $(\Delta_k)$ be a family of formally self-adjoint differential operators
  of the form 
  \eqref{eq:ass_op}. 
  Then, if $\la \in \Si_y$,  there exists $C(y,\la)$ such that 
$$\op{dist} ( \la ,  \spec ( k^{-1} \Delta_k )
  ) \leqslant C(y,
\la) k^{-\frac{1}{2}}, \qquad  \forall k. $$ 
Furthermore, for any $\bs >0$, $C(y,\la)$
stays bounded when $(y, \la)$ runs over $M \times (-\infty,\bs ]$. 
\end{theo}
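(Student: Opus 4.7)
The plan is to construct quasimodes of $k^{-1} \Delta_k$ with approximate eigenvalue $\la$, using the peaked sections of Proposition \ref{prop:peaked-sections}. Fix $\la \in \Si_y$. By Section \ref{sec:restr-delt-anti}, the eigenvalues of $\PPP_y$ coincide with those of its restriction $\PP_y$ to $\D(T_yM) \otimes A_y$, so I can choose a nonzero anti-holomorphic polynomial $f \in \D(T_yM) \otimes A_y$ with $\PPP_y f = \la f$, normalized so that
\[ \int_{T_yM} e^{-\frac{1}{2} |\xi|_y^2} |f(\xi)|^2 \, d\mu_y(\xi) = 1. \]

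Next, I form the peaked section $\Phi_k(f)$ via \eqref{eq:peaked_Section}. Part (1) of Proposition \ref{prop:peaked-sections} gives $\|\Phi_k(f)\|^2 = 1 + \bigo(e^{-C/k}) \geq \tfrac{1}{2}$ for $k$ sufficiently large, while part (2) gives
\[ k^{-1} \Delta_k \Phi_k(f) - \la \Phi_k(f) = \Phi_k(\PPP_y f - \la f) + \bigo(k^{-\frac{1}{2}}) = \bigo(k^{-\frac{1}{2}}). \]
Since $\Delta_k$ is formally self-adjoint and elliptic on a compact manifold, it is essentially self-adjoint, and the standard quasimode argument (spectral theorem for self-adjoint operators) yields
\[ \op{dist}(\la, \spec(k^{-1} \Delta_k)) \leq \frac{\|(k^{-1} \Delta_k - \la) \Phi_k(f)\|}{\|\Phi_k(f)\|} \leq C(y,\la) \, k^{-\frac{1}{2}}, \]
proving the first assertion.

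For the uniformity statement, fix $\bs > 0$ and restrict to $\la \leq \bs$. By Lemma \ref{lem:smooth}(1), the eigenvector $f$ may be chosen in the finite-dimensional fiber $\D_{\leqslant p}(T_yM) \otimes A_y$ of a smooth vector bundle over $M$, where $p = p(\bs)$ is independent of both $y$ and $\la$. Cover $M$ by finitely many Darboux charts of the type used in Proposition \ref{prop:peaked-sections}, each equipped with a fixed cutoff $\psi$. On each such chart the coefficients $a_{ij}$, $c$, $\tilde b_i$, $\tilde c$ controlling the $\bigo(k^{-\frac{1}{2}})$ remainder in Proposition \ref{prop:peaked-sections}(2) are smooth in $y$, and the polynomial coefficients of $f$ relative to a fixed basis of $\D_{\leqslant p}(TM) \otimes A$ are continuous once $f$ is normalized. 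Since all finite-dimensional norms on a fixed-rank bundle over a compact base are uniformly equivalent, the constant $C(y,\la)$ stays bounded on $M \times (-\infty,\bs]$.

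The main technical point is the uniformity of the constants in Proposition \ref{prop:peaked-sections}(2) with respect to both the base point $y$ and the polynomial data $f$; the key reduction is Lemma \ref{lem:smooth}(1), which confines eigenvectors of bounded eigenvalue to polynomials of a fixed maximal degree and thereby turns the problem into a matter of continuity on a finite-rank bundle plus compactness of $M$.
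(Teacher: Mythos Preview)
Your proof is correct and essentially identical to the paper's: quasimodes from peaked sections via Proposition \ref{prop:peaked-sections}, then Lemma \ref{lem:smooth}(1) plus compactness of the unit-sphere bundle of $\D_{\leqslant p}(TM)\otimes A$ over $M$ for the uniformity in $(y,\la)$. One nitpick: the normalized eigenvector $f$ need not depend \emph{continuously} on $(y,\la)$ (eigenvalues may collide), but this is harmless since the remainder estimate depends on $f$ only through its membership in that compact set, which is what your final sentence actually uses.
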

This proves the first assertion of Theorem \ref{theo:intro_spec}. 

\begin{proof}
  By Section \ref{sec:laplacian-linear}, any eigenvalue $\la$ of $\PPP_y$ has an
  eigenfunction $f \in \pol
  (T_yM) \otimes A_y$. Normalising conveniently $f$, we get by Proposition \ref{prop:peaked-sections},
  $$ \| \Phi_k (f ) \| = 1 + \bigo ( e^{-k/C}), \qquad  k^{-1}
  \Delta_{k} \Phi_k (f) = \la \Phi_k(f) + \bigo
  (k^{-\frac{1}{2}}) $$ 
  which proves that $\op{dist} ( \la ,  \spec (k^{-1} \Delta_k )
  ) = \bigo (k^{-\frac{1}{2}})$. To get a uniform $\bigo$ when $\la \leqslant \bs$,
remember that by the first assertion of Lemma \ref{lem:smooth}, 
we can choose $f \in \D_{\leqslant p} (T_yM) \otimes A_y$ where $p$ is sufficiently large
and independent of $y \in M$. Furthermore, for any $p \in \N$, the $\bigo$'s in Proposition
\ref{prop:peaked-sections} are uniform with respect to $f$ describing the
compact set $ \{ f \in \D_{\leqslant p } ( TM) \otimes A, \| f \| = 1 \}$.   Here we
can use any metric of $\D_{\leqslant p } ( TM)$, the natural one in our situation being $\| f \|^2 =
\int_{T_yM} e^{-\frac{1}{2} |\xi|_y^2 } | f (\xi) |^2
  d\mu_y (\xi)  $ for $f \in \D (T_yM)$.
\end{proof}

\subsection{A local approximated resolvent} \label{sec:local-appr-resolv}

Recall that $k^{-1} \Delta_{y,k} = S_k \Delta_y S_k^*$ so that  $k^{-1} \Delta_{y,k}$   has
the same spectrum $\Si_y$ as $\Delta_y$. For any $\la \in \C
\setminus \Si_y$, we denote by $$R_{y,k}(\la) := (\la - k^{-1}
\Delta_{y,k})^{-1} : L^2 (T_yM) \otimes A_y \rightarrow L^2 (T_yM) \otimes A_y$$ the resolvent.  
We will need the following basic elliptic estimates. 
\begin{prop}
  For any $\la \in \C \setminus \Si_y$, the resolvent $R_{y,k}(\la)$ sends $\Ci_0$ to
  $\Ci$ and satisfies
  \begin{xalignat}{2} \label{eq:estimation_model}
    \| k^{-\frac{1}{2}} \nabla_{i,k}  R_{y,k} (\la) \|
\leqslant C_\bs d^{-1} , \quad  \| k^{-1} \nabla_{i,k} \nabla_{j,k}  R_{y,k}
(\la)  \| \leqslant C_\bs d^{-1} 
\end{xalignat}
if $|\la|\leqslant \bs $ with $d = \op{dist} (\la , \Si_y)$ and the constant $C_\bs$  independent of $k$.
\end{prop}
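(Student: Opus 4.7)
The plan is to reduce to $k=1$ via the unitary rescaling $S_k$, and then prove the analogous estimates for $(\la - \Delta_y)^{-1}$ by integration by parts. From \eqref{eq:rescale} and the unitarity of $S_k$, one has
\begin{equation*}
k^{-\frac{1}{2}}\nabla_{i,k} R_{y,k}(\la) = S_k\, \nabla_i (\la - \Delta_y)^{-1}\, S_k^{-1},
\end{equation*}
and an analogous formula for the second-order operator. So the claim reduces to proving $\|\nabla_i (\la - \Delta_y)^{-1}\| \leqslant C_\bs d^{-1}$ and $\|\nabla_i \nabla_j (\la - \Delta_y)^{-1}\| \leqslant C_\bs d^{-1}$. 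Throughout I write $u = (\la - \Delta_y)^{-1} f$ for $f \in \Ci_0(T_yM)\otimes A_y$, which is smooth and lies in $\dom \otimes A_y$ by ellipticity; the general case follows by density. I fix a $g_y$-orthonormal basis $(e_i)$ of $T_yM$, so that $\dscal_y = -\tfrac{1}{2}\sum_i \nabla_i^2$ and $\nabla_i^* = -\nabla_i$, and recall that $[\nabla_i, \nabla_j] = \tfrac{1}{i}\om(e_i, e_j)$ is a constant scalar.

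For the first-order bound, pairing $(\la - \Delta_y)u = f$ with $u$ and using $\Delta_y = \tfrac{1}{2}\nabla^*\nabla + V(y)$ gives
\begin{equation*}
\la\|u\|^2 - \tfrac{1}{2}\|\nabla u\|^2 - \langle V(y)u, u\rangle = \langle f, u\rangle.
\end{equation*}
Taking real parts yields $\|\nabla u\|^2 \leqslant 2(|\la| + \|V(y)\|)\|u\|^2 + 2\|f\|\|u\|$. Combined with the spectral bound $\|u\| \leqslant d^{-1}\|f\|$ and the fact that $d$ stays bounded on $\{|\la| \leqslant \bs\}$ (since the smallest element of $\Si_y$ is controlled by $\sum B_i(y)/2$ and $\|V(y)\|$), both terms are absorbed into $C_\bs d^{-2}\|f\|^2$, giving $\|\nabla u\| \leqslant C_\bs d^{-1}\|f\|$.

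For the second-order bound, I use the identity, obtained by iterated integration by parts while tracking the constant commutators,
\begin{equation*}
\sum_{i,j}\|\nabla_i \nabla_j u\|^2 = \bigl\|\textstyle\sum_i \nabla_i^2 u\bigr\|^2 + R(u), \qquad |R(u)| \leqslant C\|\nabla u\|^2.
\end{equation*}
Since $\sum_i \nabla_i^2 u = -2(\Delta_y u - V(y)u) = -2(\la u - f - V(y)u)$, the first term on the right is bounded by $C_\bs d^{-2}\|f\|^2$ via $\|u\| \leqslant d^{-1}\|f\|$ and the boundedness of $d$, while $R(u)$ is controlled by the first-order estimate already proved. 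This yields the claimed bound $\|\nabla_i \nabla_j u\| \leqslant C_\bs d^{-1}\|f\|$ for each pair $(i,j)$.

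The main obstacle is the second-order identity: it requires carefully permuting the $\nabla_i$ using the canonical commutation relations, and one must verify that the remainder stays at the level of $\|\nabla u\|^2$. The fact that $[\nabla_i, \nabla_j]$ is a \emph{constant} scalar in Darboux coordinates is essential here; otherwise one would generate new principal-order terms. The endomorphism-valued potential $V(y)$ poses no difficulty since it is a fixed Hermitian operator on $A_y$, independent of $\xi \in T_yM$, and therefore commutes with every $\nabla_i$.
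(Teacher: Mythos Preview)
Your proof is correct and follows essentially the same route as the paper's: reduction to $k=1$ via the rescaling $S_k$, then the first-order bound from the quadratic form identity $\langle \Delta_y u, u\rangle = \tfrac{1}{2}\|\nabla u\|^2 + \langle V(y)u,u\rangle$, and the second-order bound from the commutator computation $\|\nabla_i\nabla_j u\|^2 = \langle \nabla_j^2 u,\nabla_i^2 u\rangle + \tfrac{2}{i}\om_{ij}\langle \nabla_j u,\nabla_i u\rangle$ summed over $i,j$. The only cosmetic difference is that the paper writes the second-order identity for each fixed pair $(i,j)$ and then invokes the summed estimate $\sum_{i,j}\langle \nabla_i^2 u,\nabla_j^2 u\rangle = 4\|\Delta_y u - V(y)u\|^2$, whereas you sum first and package the commutator terms as a remainder $R(u)$; the content is the same.
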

Here and in the sequel, the norm $\| \cdot \|$ is the operator norm associated
to the $L^2$-norm. 
\begin{proof} The first assertion follows from elliptic regularity: for any
  distribution $\psi$ of $T_yM$, if $( \la - k^{-1}\Delta_{y,k}) \psi$
  is smooth then $\psi$ is smooth.

  Since $R_{y,k} ( \la) = S_k R_{y,1} ( \la) S_k^*$ and $k^{-\frac{1}{2}}
  \nabla_{i,k} = S_k \nabla_i S_k^*$, it suffices to prove the inequalities
  \eqref{eq:estimation_model} for $k=1$. We can assume that
the frame  $(\partial/\partial x_i)$ is $g$-orthonormal at $y$, so $g^{ij}_y =
  \delta_{ij}$, so $\Delta_y = - \frac{1}{2} \sum_i \nabla_i^2 + V(y) $. Since
  $\langle \Delta_y u, u \rangle = \frac{1}{2} \sum \| \nabla_i u \|^2 +
  \langle V(y) u, u \rangle $, we have by Cauchy-Schwarz inequality
\begin{gather} \label{eq:ineq1}
 \| \nabla_i u \|^2 \leqslant C \| u \| ( \| \Delta_y u \| + \| u \|)
\end{gather}
Since $[\nabla_i, \nabla_j] = \frac{1}{i} \om_{i,j}$, we have
\begin{xalignat}{2} \notag
& \| \nabla_i \nabla_j u \|^2 = \langle \nabla_j \nabla_i^2 \nabla_j u, u
\rangle = \langle \nabla_i^2 \nabla_j^2 u,u  \rangle +  \tfrac{2}{i} \om_{ji}
\langle \nabla_i \nabla_j  u, u \rangle  \\
&   \label{eq:local-appr-resolv-1} = \langle \nabla_j^2 u, \nabla_i^2 u  \rangle +  \tfrac{2}{i} \om_{ij}
\langle  \nabla_j  u, \nabla_i u \rangle
\end{xalignat}
Moreover,
\begin{gather} \label{eq:tfrac14-sum_i-j}
\tfrac{1}{4} \sum_{i,j} \langle \nabla_i^2 u , \nabla_j^2 u \rangle =
\| \Delta_y u - V(y) u \|^2 \leqslant C ( \| \Delta_y u \| +  \| u \| )^2 .
\end{gather}
Estimating the first term of \eqref{eq:local-appr-resolv-1} with
\eqref{eq:tfrac14-sum_i-j} and the second one with
\eqref{eq:ineq1}, it comes that
\begin{gather} \label{eq:ineq2}
\| \nabla_i \nabla_j u \|^2 \leqslant C ( \| \Delta_y u \| + \| u \| )^2 
\end{gather}

To conclude the proof, we use that the norm of $R_y( \la) = ( \la -
\Delta_y)^{-1}$ is $d ^{-1}$ and $\Delta_y R_y ( \la) = \la R_y ( \la) -
\op{id} $ so when $|\la | \leqslant \bs$, 
$$\| \Delta_y R_{y} ( \la) \|
\leqslant \bs d^{-1} + 1 \leqslant C_\bs d^{-1}$$ because $d$ stays bounded
when $\la$ is. Hence it follows from \eqref{eq:ineq1} and \eqref{eq:ineq2} that
$$ \| \nabla_i R_y ( \la) v \| \leqslant C_\bs d^{-1}\| v\| , \qquad \| \nabla_j \nabla_i R_y (
\la) v\| \leqslant C_\bs d^{-1} \|v \|  $$
which corresponds to \eqref{eq:estimation_model} for $k=1$. 
\end{proof}

Recall that we identified a neighborhood  of $y \in M$ with a neighborhood $U$
of
the origin of $T_yM$ through Darboux coordinates. Introduce a smooth function $\chi : T_yM  \rightarrow [0,1]$ such that $\chi ( \xi)
= 1$ when $|\xi| \leqslant 1$ and $\chi ( \xi) =0$ when $|\xi| \geqslant 2$. Define $\chi_r
(\xi) := \chi (\xi /r)$. In the sequel we assume that $r$ is sufficiently small so
that $\chi_r$ is supported in $U$. Then for any differential operator $P$
acting on $\Ci ( U )$,
$\chi_r P$ and $P \chi_r$ are differential operators with  coefficients supported
in $U$, so we can view them as operators acting on $\Ci ( T_yM)$. 

In the following Lemma, we prove that the resolvent $R_{y,k} (\la)$ of $k^{-1}
\Delta_{y,k}$ is a local right-inverse of $ ( \la - k^{-1} \Delta_k)$ up to some error.

\begin{lemme} \label{lm:main-computation}
  For any $\la \in \C \setminus \Si_y$ such that $|\la |
  \leqslant \bs $, we have with $d = d( \la ,
  \Si_y)$  
\begin{gather} \|  ( \la - k^{-1} \Delta_k)  \chi_r R_{y,k} ( \la) - \chi_r \|
  \leqslant C_\bs F(r,
 k^{-1}, d)
\end{gather}
 where $F(r,\hbar, d) =  ( r + \hbar^{1/2} + \hbar r^{-2} + \hbar^{1/2} r^{-1}
 )d^{-1} $.
\end{lemme}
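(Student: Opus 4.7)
The starting observation is to rewrite the operator as
\begin{gather*}
(\la - k^{-1}\Delta_k)\chi_r R_{y,k}(\la) - \chi_r
 = \bigl\{(\la - k^{-1}\Delta_k)\chi_r - \chi_r(\la - k^{-1}\Delta_{y,k})\bigr\} R_{y,k}(\la) \\
 = k^{-1}\bigl(\chi_r \Delta_{y,k} - \Delta_k \chi_r\bigr) R_{y,k}(\la)
 = k^{-1}\bigl(\chi_r(\Delta_{y,k}-\Delta_k) - [\Delta_k,\chi_r]\bigr) R_{y,k}(\la).
\end{gather*}
So the task is to bound two contributions: a \emph{local approximation error} $k^{-1}\chi_r(\Delta_{y,k}-\Delta_k) R_{y,k}(\la)$ coming from the fact that $\Delta_k$ and $\Delta_{y,k}$ differ, and a \emph{commutator error} $-k^{-1}[\Delta_k,\chi_r]R_{y,k}(\la)$ coming from the cutoff.

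For the local approximation error, I would plug in the formula \eqref{eq:main_observation},
\[ \Delta_k - \Delta_{y,k} = \sum_{i,j} a_{ij}\nabla_{i,k}\nabla_{j,k} + \sum_i a_i \nabla_{i,k} + kc + b, \]
and treat each term separately using the resolvent bounds \eqref{eq:estimation_model}. The key point is that $a_{ij}$ and $c$ vanish at the origin $y$, so on $\op{supp}\chi_r \subset B(0,2r)$ one has $|\chi_r a_{ij}|, |\chi_r c| = \bigo(r)$. Hence:
\begin{itemize}[noitemsep,topsep=0pt]
\item $k^{-1}\chi_r a_{ij}\nabla_{i,k}\nabla_{j,k}R_{y,k}(\la) = \bigo(r \cdot d^{-1})$ using the second inequality of \eqref{eq:estimation_model};
\item $k^{-1}\chi_r a_i \nabla_{i,k}R_{y,k}(\la) = \bigo(k^{-1/2} d^{-1})$ using the first inequality of \eqref{eq:estimation_model};
\item $\chi_r c\, R_{y,k}(\la) = \bigo(r d^{-1})$, and $k^{-1}\chi_r b\, R_{y,k}(\la) = \bigo(k^{-1}d^{-1})$, the latter being absorbed into $k^{-1/2}d^{-1}$.
\end{itemize}

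For the commutator, since $[\Delta_k,\chi_r] = -\tfrac{1}{2}\sum g^{ij}[\nabla_{i,k}\nabla_{j,k},\chi_r] + \sum a_i[\nabla_{i,k},\chi_r]$ (the zeroth order terms $kV$ and $b$ commute with $\chi_r$), and since $[\nabla_{i,k},\chi_r] = \partial_{x_i}\chi_r$, the commutator is a sum of first order terms with coefficients of size $\bigo(r^{-1})$ and zeroth order terms of size $\bigo(r^{-2})$, all supported in $\{r\leqslant|\xi|\leqslant 2r\}$. Applying again \eqref{eq:estimation_model} gives the contributions $\bigo(k^{-1/2}r^{-1}d^{-1})$ and $\bigo(k^{-1}r^{-2}d^{-1})$ respectively. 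Summing all five contributions yields exactly $C_\bs F(r,k^{-1},d)$.

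The main obstacle is really bookkeeping: one must be careful that after commuting $\nabla_{i,k}$ past $\partial_{x_j}\chi_r$ (to bring $[\Delta_k,\chi_r]$ into a form whose norm can be estimated via \eqref{eq:estimation_model}), the generated lower order terms are still controlled, and that the constants appearing depend only on $\bs$ and on the fixed data $(g,V,a_i,b)$, not on $\la$, $k$ or $r$. This uniformity is guaranteed because $|\la|\leqslant \bs$ and because the coefficients $a_{ij}, c$ and their first derivatives are smooth on the fixed neighborhood $U$.
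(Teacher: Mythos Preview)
Your proof is correct and follows essentially the same approach as the paper: the same decomposition into the commutator term $-k^{-1}[\Delta_k,\chi_r]R_{y,k}(\la)$ and the local approximation term $k^{-1}\chi_r(\Delta_{y,k}-\Delta_k)R_{y,k}(\la)$, the same use of \eqref{eq:main_observation} together with the vanishing of $a_{ij}$ and $c$ at the origin, and the same application of the resolvent bounds \eqref{eq:estimation_model} to obtain each of the four contributions to $F(r,k^{-1},d)$.
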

\begin{proof} 
We compute
\begin{xalignat}{2} \notag &  (\lambda - k^{-1} \Delta_k ) \chi_r R _{y,k}( \la) -
  \chi_r \\ \notag
  = &  -  k^{-1} [ \Delta_k , \chi_r ] R_{y,k}
  ( \lambda) + \chi_r ( \lambda - k^{-1} \Delta_k ) R_{y,k}(\lambda) - \chi_r \\
  \label{eq:deux_termes}
= &   -  k^{-1}  [ \Delta_k , \chi_r ] R_{y,k}
( \lambda) + \chi_r k^{-1} ( \Delta_{y,k}  -  \Delta_{k} ) R_{y,k} ( \lambda)
\end{xalignat}
To estimate the first term, we start from assumption \eqref{eq:ass_op}, which gives us 
\begin{xalignat*}{2}
  [\Delta_k, \chi_r ] & = - \tfrac{1}{2}g^{ij} [\nabla_{i,k} \nabla_{j,k} , \chi_r ] + a_j [
\nabla_{j,k}, \chi_r ] \\ & =  -  \tfrac{1}{2} g^{ij} \bigl( (\partial_j \partial_i \chi_r) + (\partial_i
\chi_r)\nabla_{j,k} + ( \partial_j \chi_r) \nabla_{i,k} \bigr) + a_j (\partial_j
\chi_r)  .
\end{xalignat*}
Applying the estimates \eqref{eq:estimation_model}, we deduce that
\begin{xalignat*}{2}  \| k^{-1}  [  \Delta_k , \chi_r ] R_{y,k}
( \lambda) \|  & \leqslant C ( k^{-1} r^{-2} d^{-1} + k^{-\frac{1}{2}} r^{-1} d^{-1} +
k^{-1} r^{-1} d^{-1}) \\ & \leqslant  C  ( k^{-1} r^{-2} +k^{- \frac{1}{2}} r^{-1}
) d^{-1}
\end{xalignat*}
To estimate the second term of \eqref{eq:deux_termes}, we use the expression  \eqref{eq:main_observation} and the fact that the $a_{ij}$'s
and $c$ vanish at
the origin so that $|\chi_r a_{ij} | \leqslant Cr$ and $| \chi_r c| \leqslant Cr$.
By \eqref{eq:estimation_model} it follows that 
\begin{xalignat*}{2}
\| \chi_r  k^{-1}  ( \Delta_k - \Delta_{y,k} )R_{y,k} ( \lambda)\| & \leqslant C ( r 
+ k ^{-\frac{1}{2}}  + k^{-1} ) d^{-1} \leqslant C ( r 
+ k ^{-\frac{1}{2}}  ) d^{-1}
\end{xalignat*} 
which concludes the proof. 
\end{proof}

\subsection{Globalisation} \label{sec:globalisation}

The local approximation of the resolvent at $y$ in the previous section was based on a
choice of Darboux coordinates. To globalise this, we will first choose such
coordinate charts depending smoothly on $y$. 
All the constructions to come depend on an auxiliary Riemannian metric. 
For any $y \in M$ and $r>0$ let $B_y (r)$ be the open ball $\{ \xi \in T_yM,
\; \| \xi \| < r\}$.

\begin{lemme} \label{lem:Darboux}
  There exist $r_0 >0$ and a smooth family of embeddings $(\Psi_y:
  B_y (r_0) \rightarrow M, \; y \in M)$ such that for any $y
  \in M$, $\Psi_y (0) = y$, $T_0 \Psi_y =
  \op{id}_{T_yM}$ and $\Psi_y^* \om$ is constant on $B_y (r_0)$.  
\end{lemme}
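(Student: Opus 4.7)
The plan is to apply Moser's trick in a parametric form, taking the exponential map as our initial smooth family of local charts and then correcting it by a family of diffeomorphisms of $T_yM$ that fix the origin to first order and that turn $\exp_y^*\om$ into its constant value at the origin. Choose any auxiliary Riemannian metric on $M$ (the given metric $g$ works) and let $\exp_y : T_yM \to M$ be the associated exponential map. By compactness of $M$ there exists $r_1 > 0$ such that, for every $y \in M$, $\exp_y$ is an embedding of $B_y(r_1)$ into $M$; this family depends smoothly on $y$, and $\exp_y(0) = y$, $T_0 \exp_y = \op{id}_{T_yM}$.

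Put $\om_y^{(1)} := \exp_y^* \om$ on $B_y(r_1)$ and let $\om_y^{(0)}$ be the constant $2$-form on $T_yM$ equal to $\om_y^{(1)}$ at $0$ (that is, to $\om$ at $y$ under the identification $T_0 T_yM \simeq T_yM$). Both forms are closed, non-degenerate at $0$, and coincide at $0$. The classical homotopy formula
\begin{equation*}
\al_y (\xi)(v) := \int_0^1 s\, (\om_y^{(1)} - \om_y^{(0)})_{s\xi}(\xi, v) \, ds
\end{equation*}
produces a smooth $1$-form $\al_y$ on $B_y(r_1)$, depending smoothly on $y$, with $d \al_y = \om_y^{(1)} - \om_y^{(0)}$. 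Since $\om_y^{(1)} - \om_y^{(0)}$ vanishes at $0$, one checks from the formula that $\al_y$ has vanishing $1$-jet at the origin.

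Consider the interpolation $\om_{y,t} := (1-t) \om_y^{(0)} + t\, \om_y^{(1)}$. It is closed, agrees with $\om_y^{(0)}$ at $0$ for every $t$, so by compactness of $M \times [0,1]$ it stays non-degenerate on a ball $B_y(r_0)$ with $r_0 > 0$ independent of $(y,t)$. Define the time-dependent vector field $X_{y,t}$ on $B_y(r_0)$ by $\iota_{X_{y,t}} \om_{y,t} = -\al_y$. Since $\al_y$ has vanishing $1$-jet at $0$, so does $X_{y,t}$; in particular $X_{y,t}(0) = 0$ and $T_0 X_{y,t} = 0$. Shrinking $r_0$ once more uniformly in $y$, the flow $\ph_{y,t}$ of $X_{y,t}$ is then defined for all $t \in [0,1]$ on $B_y(r_0)$, fixes the origin, and satisfies $T_0 \ph_{y,t} = \op{id}_{T_yM}$ because the linearisation at the fixed point $0$ of $X_{y,t}$ is zero.

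The standard Moser computation gives
\begin{equation*}
\frac{d}{dt} \bigl( \ph_{y,t}^* \om_{y,t} \bigr) = \ph_{y,t}^* \bigl( d \iota_{X_{y,t}} \om_{y,t} + \tfrac{d}{dt} \om_{y,t} \bigr) = \ph_{y,t}^* \bigl( -d\al_y + (\om_y^{(1)} - \om_y^{(0)}) \bigr) = 0,
\end{equation*}
so $\ph_{y,1}^* \om_y^{(1)} = \om_y^{(0)}$. Setting $\Psi_y := \exp_y \circ \ph_{y,1}$ on $B_y(r_0)$ yields a smooth family of embeddings with $\Psi_y(0) = y$, $T_0 \Psi_y = \op{id}$, and $\Psi_y^* \om = \om_y^{(0)}$ constant. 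The only step requiring care is the uniformity in $y$ of the injectivity radius of $\exp_y$, of the non-degeneracy of $\om_{y,t}$, and of the time of existence of the flow of $X_{y,t}$; all three follow from the compactness of $M$ and the smooth dependence of the data on $y$.
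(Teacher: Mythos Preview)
Your proof is correct. It is precisely the classical Moser argument with parameters, which the paper itself acknowledges can be adapted here (citing \cite[Section~3.2]{MS}). The paper, however, chooses a different route based on \cite{BLM}: rather than interpolating between $\exp_y^*\om$ and its constant value and flowing along the time-dependent Moser field, one takes the radial primitive $\al$ of $\om$ itself, defines the Liouville-type field $X$ by $\iota_X\om = 2\al$, notes that $X$ agrees with the Euler field $E$ to first order at the origin, and then linearises $X$ by the flow of the rescaled difference $Z_t(x)=t^{-2}(X-E)(tx)$. The relation $\varphi_1^*X=E$ forces $\mathcal{L}_E\varphi_1^*\om = 2\varphi_1^*\om$, so $\varphi_1^*\om$ is homogeneous of degree $2$ and, being a $2$-form, constant. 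Both approaches yield smooth dependence on $y$ by compactness; the BLM argument avoids the interpolation and produces a single autonomous construction, while your Moser proof is the textbook method and makes the key fact $T_0\ph_{y,1}=\op{id}$ (from the vanishing $1$-jet of $\al_y$) completely transparent.
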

The family $(\Psi_y , \; y \in M)$ is smooth in the sense that the map $ \Psi
(\xi) = \psi_y ( \xi)$, $ \xi \in B_y(r_0)$, from the open set $\bigcup_{y \in M}  B_y (r_0)$
of $TM$ to $M$, is smooth.

\begin{lemme} \label{lem:covering}
  There exists $N \in \N$, $r_1 >0$ and for any $0<r<r_1$ a finite
  subset $I(r)$ of $M$ such that the open sets $\Psi_y ( B_y( r)), \; y \in
  I(r)$ form a covering of $M$ with multiplicity bounded by $N$.
\end{lemme}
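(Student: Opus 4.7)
The idea is a standard Vitali-type packing argument, combined with the fact that the Darboux charts $\Psi_y$ are uniformly comparable to Riemannian balls because $T_0\Psi_y = \op{id}_{T_yM}$ and the family is smooth over the compact base $M$. Fix the auxiliary Riemannian metric on $M$ used in the previous lemma, and denote by $B^M(x,\rho)$ the corresponding Riemannian ball and by $d_M$ the distance. Since $(\Psi_y)_{y\in M}$ is a smooth family of embeddings with $\Psi_y(0)=y$ and $T_0\Psi_y=\op{id}$, a compactness argument gives constants $0 < c_1 \leqslant c_2$ and $r_1 > 0$ such that
\begin{gather*}
B^M(y, c_1 r) \subset \Psi_y(B_y(r)) \subset B^M(y, c_2 r), \qquad \forall y \in M, \; \forall r \in (0, r_1).
\end{gather*}

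Set $\ep = c_1/2$. For each $r \in (0, r_1)$, pick a maximal subset $I(r) \subset M$ such that distinct points of $I(r)$ are at $d_M$-distance at least $\ep r$. By compactness of $M$ (and the fact that pairwise distances are bounded below), $I(r)$ is finite. By maximality, for any $x \in M$ there exists $y \in I(r)$ with $d_M(x, y) < \ep r \leqslant c_1 r$, so $x \in B^M(y, c_1 r) \subset \Psi_y(B_y(r))$; hence the family $(\Psi_y(B_y(r)))_{y \in I(r)}$ covers $M$.

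It remains to bound the multiplicity uniformly in $r$. Fix $x \in M$ and let $J(x, r) = \{ y \in I(r) ; \; x \in \Psi_y(B_y(r)) \}$. For every $y \in J(x,r)$ we have $y \in B^M(x, c_2 r)$, so the open balls $B^M(y, \ep r / 2)$ with $y \in J(x, r)$ are pairwise disjoint and all contained in $B^M(x, c_2 r + \ep r / 2)$. Now $M$ being compact, standard Riemannian volume comparison gives uniform constants $\al, \be > 0$ such that $\op{vol} B^M(z, \rho) \geqslant \al \rho^{2n}$ and $\op{vol} B^M(z, \rho) \leqslant \be \rho^{2n}$ for all $z \in M$ and all $\rho$ smaller than some positive constant. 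Applying these for $r$ sufficiently small bounds $\sharp J(x, r)$ by a constant $N$ independent of $x$ and $r$, which is the required uniform multiplicity bound. Shrinking $r_1$ if necessary to absorb the volume-comparison smallness constraint concludes the proof.

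The only subtlety is ensuring that $N$ does not drift with $r$: this is exactly why we express the packing constraint as the disjointness of balls of radius proportional to $r$ inside a ball of radius proportional to $r$, so that the ratio $(c_2 + \ep/2)^{2n}/(\ep/2)^{2n}$ (up to the comparison constants $\be/\al$) controls the multiplicity for all small $r$ at once.
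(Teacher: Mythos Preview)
Your proof is correct and follows essentially the same approach as the paper: both establish the two-sided comparison $B^M(y,c_1 r)\subset \Psi_y(B_y(r))\subset B^M(y,c_2 r)$, choose a maximal $\ep r$-separated net (equivalently, a maximal packing by balls of radius $\ep r/2$), and bound the multiplicity by a volume-comparison packing estimate that is scale-invariant. The only cosmetic difference is that the paper parameterises by the Riemannian scale $\ep$ and covers with $\Psi_y(B_y(\ep C))$, whereas you parameterise directly by $r$; the arguments are otherwise identical.
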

The multiplicity of a covering $\bigcup_{i \in I} U_i \supset M$ is the
maximal number of $U_i$ with non-empty intersection. The proofs of Lemmas \ref{lem:Darboux}
and \ref{lem:covering} are standard and postponed to Section \ref{sec:misceleanous-proofs}.

Recall that $\Si = \bigcup \Si_y$. So for any $\la \in \C \setminus \Si$, the
resolvents $R_{y,k} ( \la) : \Ci_0 ( T_yM, A_y) \rightarrow \Ci  ( T_yM, A_y)$ are
well-defined. As previously, introduce  a section $F_y$ of $L \rightarrow
\Psi_y ( B_y (r))$ satisfying \eqref{eq:nabla-f_y} and a trivialisation of $A$
on $\Psi_y ( B_y(r))$, from which we identify $\Ci ( \Psi_y ( B_y (r)),
L^k \otimes A )\simeq \Ci ( B_y(r), A_y)$. Let
$$ \wt{R}_{y,k} ( \la) : \Ci_0  (
\Psi_y ( B_y (r)), L^k \otimes A ) \rightarrow   \Ci  (
\Psi_y ( B_y (r)),L^k \otimes A).$$
be the map corresponding to $R_{y,k} (\la)$ under these identifications. 

For $r$ sufficiently small, define the function $\chi_{y,r}$ supported in
$\Psi_y ( B_y (r_0))$ and such that $\chi_{y,r} ( \Psi_y(\xi)) =  \chi ( \xi /r)$.
Introduce a partition of unity $(\psi_{r,y}, \; y \in I(r))$ subordinated to
the cover $(\Psi_y ( B_y(r)), \; y \in I(r))$. Then define the operator $R_k^r
(\la) $ acting on $\Ci ( M , L^k \otimes A )$ by
\begin{gather} \label{eq:def_resolvente_approchee}
R^r_k (\la) := \sum_{y \in I(r) } \chi_{y, r } \wt{R} _{y,k} ( \la) \psi_{r,y} .
\end{gather}

\begin{theo} \label{theo:resolv}
 Let $(\Delta_k)$ be a family of formally self-adjoint differential operators
  of the form 
  \eqref{eq:ass_op}.  Then for any $|\la | \leqslant \bs$,   
\begin{gather} \label{eq:global_estimate}
\| ( \la - k^{-1} \Delta_k) R^r_{k} ( \la) - 1 \| \leqslant C_\bs F( r, k^{-1},
d)  
\end{gather}
with $d = \op{dist}(\lambda, \Si )$ and $F$ the same function as in Lemma \ref{lm:main-computation}. 
\end{theo}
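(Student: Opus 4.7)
The plan is to splice the local estimates of Lemma \ref{lm:main-computation} together via the partition of unity $(\psi_{r,y})_{y \in I(r)}$, exploiting the bounded multiplicity of the covering. For each $y \in I(r)$, set
\[ T_y := (\la - k^{-1} \Delta_k)\, \chi_{y,r}\, \wt{R}_{y,k}(\la) - \chi_{y,r}, \]
read in the Darboux chart $\Psi_y : B_y(r_0) \to M$ of Lemma \ref{lem:Darboux} with the unitary frame $F_y^k$ satisfying \eqref{eq:nabla-f_y}. In this local picture $\Psi_y^*\om$ is constant, $\nabla_{j,k}$ has the form \eqref{eq:nabl-i-k-linear}, and $\Delta_k$ has the shape used to prove Lemma \ref{lm:main-computation}, so that lemma yields $\|T_y\| \leqslant C_\bs F(r, k^{-1}, d)$. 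Because the frame $F_y$, the embedding $\Psi_y$, the metric $g$ and the potential $V$ all depend smoothly on $y$ in $B_y(r_0)$, and $M$ is compact, the constant $C_\bs$ can be taken independent of $y$.

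Arranging the partition of unity so that $\op{supp} \psi_{r,y} \subset \Psi_y(B_y(r))$, where $\chi_{y,r} \equiv 1$, we have $\chi_{y,r}\psi_{r,y} = \psi_{r,y}$, and the identity $\sum_y \psi_{r,y} = 1$ gives the telescoping
\[ (\la - k^{-1} \Delta_k)\, R^r_k(\la) - 1 = \sum_{y \in I(r)} T_y\, \psi_{r,y}. \]
The main obstacle is that $|I(r)|$ grows like $r^{-2n}$ as $r \to 0$, so a term-by-term triangle inequality is hopeless. The point is to use bounded overlap on both sides: the inputs $\psi_{r,y} f$ live in supports of controlled overlap, while each output $T_y\psi_{r,y} f$ is supported in $\op{supp}\chi_{y,r} \subset \Psi_y(B_y(2r))$, since that is where both $\chi_{y,r}$ and $[\Delta_k, \chi_{y,r}]$ are supported.

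Shrinking $r_1$ if necessary (which is a routine volume-packing refinement of Lemma \ref{lem:covering}), the cover by the $2r$-balls still has multiplicity bounded by some $N'$, uniformly in $r < r_1$. Hence pointwise at most $N'$ of the sections $T_y \psi_{r,y} f$ are non-zero at any $x \in M$, and Cauchy--Schwarz gives
\[ \Bigl| \textstyle{\sum}_y T_y \psi_{r,y} f (x) \Bigr|^2 \leqslant N'\, \textstyle{\sum}_y \bigl| T_y \psi_{r,y} f (x) \bigr|^2. \]
Integrating, combining the uniform local bound $\|T_y\| \leqslant C_\bs F$ with $\sum_y \psi_{r,y}^2 \leqslant \sum_y \psi_{r,y} = 1$, one obtains
\[ \Bigl\| \textstyle{\sum}_y T_y \psi_{r,y} f \Bigr\|^2 \leqslant N'\, (C_\bs F)^2 \textstyle{\sum}_y \|\psi_{r,y} f\|^2 \leqslant N'\, (C_\bs F)^2 \|f\|^2, \]
which is \eqref{eq:global_estimate} with constant $\sqrt{N'}\, C_\bs$. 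The only delicate input is the uniformity in $y$ of the local estimate, which reduces to compactness and the smoothness of the family $(\Psi_y)$; everything else is an overlap bookkeeping argument.
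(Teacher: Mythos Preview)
Your proof is correct and follows essentially the same strategy as the paper: decompose the error as $\sum_{y\in I(r)} T_y\,\psi_{r,y}$, invoke Lemma~\ref{lm:main-computation} uniformly in $y$ by compactness, and then sum via the bounded multiplicity of the covering. The only cosmetic differences are that the paper formulates the overlap step as two abstract lemmas (a Schur-type bound $\|\sum v_i\|^2\leqslant N\sum\|v_i\|^2$ for sections with supports in a covering of multiplicity $N$, and $\sum\|u\|_{U_i}^2\leqslant N\|u\|^2$), whereas you use the pointwise Cauchy--Schwarz together with the slightly slicker observation $\sum_y\psi_{r,y}^2\leqslant\sum_y\psi_{r,y}=1$, which spares one factor of $N$ in the final constant.
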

\begin{proof} Let $(U_i)$ be a covering of $M$ with multiplicity $N = \sup _x
  | \{ i / x \in U_i \}|$. Then
  \begin{enumerate}
    \item if $v_i$ is a family of sections such that
  $\op{supp} v_i \subset U_i$ for any $i$,  then $\| \sum v_i \| ^2 \leqslant N \sum \|
  v_i \|^2$
\item For any section $u$, $\sum \| u \|_{U_i}^2 \leqslant N \| u \|^2$. 
\end{enumerate}
To prove the first claim, $\| \sum v_i \| ^2 = \sum_{i,j} M_{ij} \langle v_i
, v_j \rangle \leqslant \sum M_{ij} \| v_i \| \| v_j\|$ where $M_{i,j} = 1$
when $U_i \cap U_j \neq \emptyset$ and $0$ otherwise. By Schur test applied to
the matrix $M$, $\langle M a , a \rangle \leqslant N \| a\|^2$ and the
result follows. 
To prove the second claim, set $m(x) = \sum 1_{U_i} ( x)$ which is bounded by
$N$ by assumption. Then $\sum \| u \|_{U_i}^2 = \int_M |u(x)|^2 m(x) d\mu (x)
\leqslant N \| u \|^2$. 

We now apply this to the covering $\Psi_y( B_{y} ( r))$, $y \in I(r)$. By
Lemma \ref{lm:main-computation}, for any $u \in \Ci ( M , L^k)$, we have  $\| S^r_{y,k}
\psi_{y,r} u \| \leqslant CF \| \psi_{y,r} u \|$
where 
$$S_{y,k}^r = 
(\la - k^{-1} \Delta_k) \chi_{y,r} \wt{R}_{y,k}  ( \la) - \chi_{y,r}$$
$F =F( r, k^{-1},
d) $ and the constant $C$ can be chosen independently of $y$ because everything depends continuously on $y$ and $M$ is compact. 
Since $R^r_k ( \lambda) - 1=  \textstyle{\sum}_{y \in I(r)} S_{y,k}
\psi_{y,r}$, we have 
\begin{gather*} 
\| R_k^r ( \la)  u -u  \| ^2 \leqslant N
  \sum_{y \in I(r)} \|
S_{y,k}^r \psi_{y,r} u \|^2 \leqslant N(C F)^2 \sum_{y \in I(r)} \| \psi_{y,r}  u \|^2 \\  \leqslant N(C F)^2
\sum_{y \in I(r)} \|  u \|_{\Psi_y(B_y(r))}^2 \leqslant (NCF)^2  \| u \| ^2 .
\end{gather*}
which proves \eqref{eq:global_estimate}.
\end{proof}

Recall basic facts pertaining to the spectral theory of $\Delta_k$, cf. as
instance \cite[Section 8.3]{Sh}. As an
elliptic formally self-adjoint differential operator of order 2 on a compact manifold, $\Delta_k$  is a
self-adjoint unbounded operator with domain the Sobolev space $H^2 (M, L^k \otimes A)$. Its spectrum $\op{sp} ( \Delta_k)$ is a discrete subset of $\R$
bounded from below and consists only of eigenvalues with finite multiplicities.
\begin{cor} \label{cor:spectrum}
For any $\bs >0$, there exists $C>0$ such that for any $k$ we have
\begin{gather} \label{eq:conclusion}
\op{sp} ( k^{-1} \Delta_k ) \cap (-\infty, \bs ] \subset \Si + C k^{-\frac{1}{4}} [-1,
1].
\end{gather}
So any $\la \in \C$, satisfying $|\la| \leqslant \Lambda$
  and $d(\la, \Si ) \geqslant C k^{-\frac{1}{4}} $, does not belong to $ \op{sp} (
  k^{-1} \Delta_k)$. Moreover, for any such $\lambda$
  \begin{gather} \label{eq:resol_est} \| R_k^{r_k}  ( \la) - ( \la - k^{-1} \Delta_k)^{-1}  \| \leqslant C
d(\la, \Si)^{-2}   k^{-\frac{1}{4}} 
\end{gather}
with $r_k = k^{-\frac{1}{4}}$.
\end{cor}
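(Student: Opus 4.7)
The plan is to apply Theorem \ref{theo:resolv} with an optimized choice of $r$ depending on $k$. With $\hbar = k^{-1}$, the four contributions to $F(r,\hbar,d) = (r + \hbar^{1/2} + \hbar r^{-2} + \hbar^{1/2} r^{-1}) d^{-1}$ become $r$, $k^{-1/2}$, $k^{-1}r^{-2}$, $k^{-1/2}r^{-1}$. Setting $r_k = k^{-1/4}$ balances these: we get respectively $k^{-1/4}$, $k^{-1/2}$, $k^{-1/2}$, $k^{-1/4}$, so the dominant order is $k^{-1/4}$ and Theorem \ref{theo:resolv} yields
\[ \|(\lambda - k^{-1}\Delta_k)R_k^{r_k}(\lambda) - I\| \leqslant C_\Lambda k^{-1/4} d^{-1}, \qquad d = \op{dist}(\lambda,\Si). \]

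Next, take the constant $C$ in the corollary to be $C = 2C_\Lambda$ (up to adjustments). If $|\lambda| \leqslant \Lambda$ and $d \geqslant C k^{-1/4}$, then the operator $S_k(\lambda) := (\lambda - k^{-1}\Delta_k) R_k^{r_k}(\lambda) - I$ has norm at most $1/2$, so $I + S_k(\lambda)$ is invertible with $\|(I+S_k(\lambda))^{-1}\| \leqslant 2$ by Neumann series. Consequently $(\lambda - k^{-1}\Delta_k)$ admits a bounded right inverse $R_k^{r_k}(\lambda)(I+S_k(\lambda))^{-1}$, and in particular is surjective. For non-real $\lambda$ this alone implies $\lambda \in \rho(k^{-1}\Delta_k)$ since $\Delta_k$ is self-adjoint. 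For real $\lambda$, self-adjointness promotes surjectivity to bijectivity: if $u \in \op{ker}(\lambda - k^{-1}\Delta_k)$, then for any $w$ in the domain, $\langle u,(\lambda - k^{-1}\Delta_k)w\rangle = \langle(\lambda - k^{-1}\Delta_k)u, w\rangle = 0$; since the image is all of $L^2$, we get $u = 0$. This establishes \eqref{eq:conclusion}.

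For the resolvent estimate \eqref{eq:resol_est}, I would write the identity
\[ R_k^{r_k}(\lambda) - (\lambda - k^{-1}\Delta_k)^{-1} = -R_k^{r_k}(\lambda)(I+S_k(\lambda))^{-1} S_k(\lambda), \]
obtained from $(\lambda - k^{-1}\Delta_k)^{-1} = R_k^{r_k}(\lambda)(I+S_k(\lambda))^{-1}$. Combining $\|S_k(\lambda)\| \leqslant C k^{-1/4} d^{-1}$ and $\|(I+S_k(\lambda))^{-1}\| \leqslant 2$, the remaining task is to bound $\|R_k^{r_k}(\lambda)\|$ by $C d^{-1}$. I would do this by redoing the finite-multiplicity argument from the proof of Theorem \ref{theo:resolv}: each local resolvent satisfies $\|\wt R_{y,k}(\lambda)\| \leqslant d(\lambda,\Si_y)^{-1} \leqslant d^{-1}$, the sums $\sum_y |\psi_{r_k,y}|^2$ and $\sum_y |\chi_{y,r_k}|^2$ are pointwise bounded thanks to the uniform bound $N$ on the multiplicity of the covering, and a Schur-test style estimate then yields $\|R_k^{r_k}(\lambda)u\|^2 \leqslant Cd^{-2}\|u\|^2$. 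The desired bound $Cd^{-2}k^{-1/4}$ follows.

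The argument is essentially a routine postprocessing of Theorem \ref{theo:resolv}; the only delicate point is to check that $\|R_k^{r_k}(\lambda)\|$ depends on $d$ only as $d^{-1}$ and not worse, so that the final bound comes out as $d^{-2}k^{-1/4}$ rather than a less useful rate. The self-adjointness step for real $\lambda$ is standard but worth stating explicitly, since the resolvent approximation $R_k^{r_k}$ is not itself the resolvent of a self-adjoint operator.
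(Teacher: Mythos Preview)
Your proof is correct and follows the same strategy as the paper: set $r_k=k^{-1/4}$ to balance the terms of $F$, apply Theorem~\ref{theo:resolv}, invert $I+S_k(\lambda)$ by Neumann series, bound $\|R_k^{r_k}(\lambda)\|\leqslant Cd^{-1}$ via the covering multiplicity (this is exactly the paper's \eqref{eq:-rr_k-}), and conclude by self-adjointness.

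One detail to make explicit: when you say ``$(\lambda-k^{-1}\Delta_k)$ admits a bounded right inverse \ldots\ and in particular is surjective'', you need the right inverse to land in the domain $H^2(M,L^k\otimes A)$ of the self-adjoint realisation, not merely in $L^2$. This holds because $R_k^{r_k}(\lambda)$, being a finite sum of cut-off model resolvents, maps $L^2$ to $H^2$ by elliptic regularity; but it should be said. The paper takes a slightly more elaborate route here, producing a parametrix to show the right inverse is continuous $L^2\to H^2$ and then invoking that $\lambda-k^{-1}\Delta_k$ is Fredholm of index zero, whereas you use the more direct kernel--range orthogonality; both are fine. (Minor: the sign in your displayed identity for $R_k^{r_k}(\lambda)-(\lambda-k^{-1}\Delta_k)^{-1}$ should be $+$, not $-$, which of course does not affect the norm bound.)
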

\eqref{eq:conclusion} shows the second assertion of Theorem
\ref{theo:intro_spec} with a $k^{-\frac{1}{4}} $ instead of
$k^{-\frac{1}{2}}$. The improvement with a $k^{-\frac{1}{2}}$ will be proved
in Corollary \ref{cor:glob-spectr-estim}.

\begin{proof} First, since $\| \wt{R} _{y,k} ( \la) \| \leqslant d ( \la, \Si_y)^{-1}
  \leqslant d^{-1}$ with $d = d ( \la , \Si)$, we deduce from the first part of the proof of Theorem \ref{theo:resolv} that
  \begin{gather} \label{eq:-rr_k-}
    \| R^r_k ( \la) \| \leqslant C d^{-1} 
  \end{gather}
  where $C$ does not depend on $r$, $\la$ and $k$. From now on assume that $r
  = k^{-\frac{1}{4} }$.  So $F(r,k^{-1}, d) \leqslant C'
  k^{-\frac{1}{4} } d^{-1}$.
  By Theorem \ref{theo:resolv}, as soon as $ C_\bs C' k^{-\frac{1}{4}} d^{-1}
  \leqslant 1/2$, $( \la - k^{-1} \Delta_k) R^r_{k} ( \la)$ is invertible, so 
$\widetilde{R}_k := R^r_k ( \la) (( \la - k^{-1} \Delta_k) R^r_{k} ( \la)
)^{-1} $ is a bounded operator of $L^2$ satisfying
\begin{gather} \label{eq:inverse_a_droite}
  (\la - k^{-1} \Delta_k ) \widetilde{R}_k = \op{id}
\end{gather}
and by \eqref{eq:-rr_k-},
$$ \| \widetilde{R}_k - R^r_k ( \la) \| \leqslant 2 \| R^r_k ( \la) \| \; \| (
\la - k^{-1} \Delta_k) R^r_{k} ( \la) -1 \| \leqslant C''d^{-2} k^{-\frac{1}{4}} .$$
We claim
that $\widetilde{R}_k$ is actually continuous $L^2 \rightarrow
H^2$. Indeed, by classical result on elliptic operators \cite[Theorem 5.1]{Sh}, there exists a
pseudodifferential operator $P_k$ of order $-2$ which is a parametrix of $\la - k ^{-1}
\Delta_k$, that is $ P_k(\la - k^{-1} \Delta_k) = \op{id} + S_k $ where $S_k$ is a
smoothing operator. Then
multiplying by $\widetilde{R}_k$, we obtain $P_k = \widetilde{R}_k + S_k \widetilde{R}_k$, so
$\widetilde{R}_k = P_k - S_k \widetilde{R}_k$. Now, $P_k$ being of order
$-2$ and $S_k$ being smoothing, they are both continuous $L^2 \rightarrow H^2$,
so the same holds for $\widetilde{R}_k$. 

To finish the proof, we assume that $\la$ is real. Then $k^{-1}  \Delta_k
-\la$ is a Fredholm operator from $H^2
$ to $ L^2$ with index 0, because it is formally self-adjoint, cf.
\cite[Theorem 8.1]{Sh}. By \eqref{eq:inverse_a_droite}, $\la - k^{-1} \Delta_k$ sends $H^2$
onto $L^2$, so its kernel is trivial, so $\la$ is not an eigenvalue.  
\end{proof}

\section{The operator class $\lag (A)$} \label{sec:operator-class-lag}

\subsection{Symbol spaces} \label{sec:symbol-spaces}

Let $\EE$ be a $n$-dimensional Hermitian space. As we did in Section 
 \ref{sec:laplacian-linear} for $\EE = T_yM$, consider the spaces $ \pol ( \EE)$, $\D (\EE)$
consisting respectively of polynomial maps and antiholomorphic polynomial
maps from $\EE$ to $\C$. We will introduce two subalgebras $\symb (\EE)$ and
$\wt {\symb} (\EE)$ of $\op{End} ( \D (\EE))$ and $\op{End} ( \pol ( \EE))$
respectively. These algebras will be used later to define the symbols of the
operators in the class $\mathcal{L}$.

First we equip $\pol (
\EE)$ with the scalar product 
\begin{gather} \label{eq:scal_product_bargm}
 \langle f, g \rangle = (2\pi)^{-n}\int_{\EE} e^{-|z|^2} f (z)\, \con{g (z)} \; d \mu_{\EE}
(z) , 
\end{gather}
where $\mu_{\EE}$ is the measure $\prod dz_i d \con{z}_i$ if $(z_i)$
are linear complex coordinates associated to an orthonormal basis of $\EE$.
The Gaussian weight $e^{-|z|^2}$ appeared already in Section
\ref{sec:laplacian-linear} through the pointwise norm of the frame $s = \exp
(-\frac{1}{2} |z|^2)$.

Choose linear complex coordinates $(z_i)$ as above. Then the family $|\al \rangle := (\al !)^{- \frac{1}{2}} \con{z}^\al$, $ \al \in \N^n$ is an orthonormal
basis of $ \D ( \EE)$. For any $\al, \be \in \N^n$, introduce the endomorphism $ \Uu_{\al \be} := | \al \rangle
\langle \be |$ of $\D ( \EE)$.  Here we use the physicist notation, so $\Uu_{\al \be} ( \con{z}^\ga ) = 0 $ when $\ga \neq \be$
and $ \Uu_{\al\be} \bigl( |\be \rangle  \bigr) = |\al
\rangle $.

Consider the creation and annihilation operators $\ac_i$, $\ac_i^{\dagger}$
defined in \eqref{eq:a_i_def} as endomorphisms of $\pol (\EE)$. Note that
with the scalar product \eqref{eq:scal_product_bargm}, $\ac_i^{\dagger}$ is
the formal adjoint of $\ac_i$. 
Introduce the endomorphism $\Vvv_{\al \be}$ of $\pol (\EE)$
$$ \Vvv_{\al\be} := ( \al! \be ! ) ^{-\frac{1}{2}} (\ac^\dagger)^{\al} \Vvv_{00} \ac^\be$$
where $\ac^\be = \ac_1^{\be(1)} \ldots \ac_n^{\be (n)}$, $(\ac^\dagger)^{\al} =
(\ac_1^\dagger)^{\al (1)} \ldots (\ac_n ^\dagger)^{\al (n)}$ and $\Vvv_{00}$ is
the orthogonal projector onto the subspace $\mathfrak{L}_0$ of $\pol (\EE)$
consisting of holomorphic
polynomials. 

Observe that the restriction of $\Vvv_{\al \be}$ to $\D (\EE)$ is $\Uu_{\al
  \be}$. Furthermore, in the decomposition into orthogonal subspaces  $\pol ( \EE) = \bigoplus_{\al}
\mathfrak{L}_{\al} $ considered in \eqref{eq:dec_spectral}, $\Vvv_{\al\be}$ is
zero on $\mathfrak{L}_{\ga}$ with $\ga \neq \be$ and restricts to an
isomorphism from $ \mathfrak{L}_\be$ to $\mathfrak{L}_{\al}$. Also
$\Vvv_{\al \al}$ is the orthogonal projector onto $\mathfrak{L}_{\al}$. 

The algebras $\symb (\EE)$ and $\wt{\symb } (\EE)$ are defined as the
subalgebras of $ \op{End} ( \D(\EE))$ and $\op{End} (\pol ( \EE))$ with basis
the families $(\Uu_{\al,\be}, \al , \be \in \N^n)$ and $(\Vvv_{\al \be}, \;
\al, \be \in \N^n)$ respectively. As the notations suggest, these algebras do
not depend on the coordinate choice. This follows  from the following Schwartz
kernel description. 

Let $\op{Op} : \pol (\EE) \rightarrow
\op{End} ( \pol (\EE))$  be the linear map defined by
\begin{gather} \label{eq:def_op}
\op{Op} (q) ( f) (u)   = (2\pi)^{-n} \int_{\EE} e^{u\cdot \con v - |v|^2} q (u-v ) f(v) \;
d\mu_\EE (v)  
\end{gather}
where $u \cdot \con v$ is the scalar product of $u$ and $v$. By \cite[Lemma
4.3]{oimpart1},  $\Vvv_{\al,\be}= \op{Op} (p_{\al,\be})$ 
where $p_{\al\be}$ is the polynomial
\begin{gather} \label{eq:p_al-be_basis}
p_{\al,\be} := ( \al ! \be !)^{-\frac{1}{2}} (\con{z}-  \partial_z )^{\al}
(-z)^\beta, \qquad \al, \be \in \N^n
\end{gather} 
Since these polynomials form a basis of $\pol (\EE )$, $\op{Op}$ is an isomorphism form $\pol ( \EE)$ to $\wt{\symb} (
\EE )$. Furthermore, the map sending $q \in \pol (\EE)$ to $\op{Op} (q)|_{\D ( \EE)} $
is an isomorphism from $\pol ( \EE) $ to $\symb (\EE)$.

In the sequel we will tensor the space $\pol ( \EE)$ with an auxiliary vector space $\mathbb{A}$ and extend
the map $\op{Op}$ from $\pol ( \EE) \otimes \op{End} \mathbb{A}$ to $\wt{\symb} (
\EE ) \otimes \op{End} \mathbb A$.

\subsection{Eigenprojectors of Landau Hamiltonian}

Choose now $\EE = T_yM$ and  recall that for a convenient choice of complex
coordinate $(z_i)$,  the associated Landau
Hamiltonian $\PPP_y$ is given by
\begin{gather} \label{eq:laplacien_creat_annihil}
\PPP_y =e^{\frac{1}{4}|\xi|_y^2} \Delta_y  e^{-\frac{1}{4}|\xi|_y^2} = \sum B_i
(y) (\ac_i^\dagger
\ac_i + \tfrac{1}{2} )  + V(y)
\end{gather}
acting on $\pol ( T_yM) \otimes A_y$. 
Its spectrum $\Si_y$ and its eigenspaces were described in Section
\ref{sec:a_y-valued-laplacian} in terms of the $\mathfrak{L}_{\al}$ and an
eigenbasis $(\zeta_\ell)$ of $V(y)$, $V(y) \zeta_\ell = V_{\ell} (y) \zeta_{\ell}$. Consequently if $I$ is any bounded subset of $\R$, the spectral projector of
$\PPP_y$ for the eigenvalues in $I$ is $\op{Op} (
\si^I(y))$ where
\begin{gather*}
\si^I(y) = \sum_{(\al, \ell) \in \mathcal{I}_y} p_{\al \al} \otimes |\zeta_{\ell}
\rangle \langle \zeta_{\ell} |,
\end{gather*}
and $\mathcal{I}_y = \bigl\{ (\al, \ell) \in
\N^{n}\times\{ 1, \ldots, r \}/ \textstyle{\sum}_i B_i
(y) ( \al (i) + \tfrac{1}{2} ) + V_{\ell} (y) \in I \bigr\}$.

The map $ y \mapsto \si^I(y)$ is a section
of the infinite rank vector bundle $\pol (TM)$, not smooth in general, not even
continuous. In the sequel we will assume that
\begin{gather} \label{eq:ass_I} \tag{C}
  \text{$I$ is a compact interval with endpoints not belonging to $\Si$} 
\end{gather}
Let $\pol_{\leqslant p } (\EE)$ be the subspace of
$\pol (\EE)$  of polynomials with degrees in $z$ and in
$\con{z}$ smaller than $p$. Let $\pol_{\leqslant p} (TM) $ be the vector bundle
over $M$ with fiber at $y$ equal to  $\pol_{\leqslant p } (T_yM )$.

\begin{lemme} \label{lem:smooth_proj}
  If $I$ satisfies \eqref{eq:ass_I} and $p$ is sufficiently large, then  $y \mapsto \si^I (y)$ is a
  smooth section of $\pol_{\leqslant p } ( TM) \otimes \op{End} A$. 
\end{lemme}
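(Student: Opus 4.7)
My plan is to combine the degree bound of Lemma \ref{lem:smooth}(1), the smoothness of the spectral projector of $\PP_y$ from Lemma \ref{lem:smooth}(2), and a smoothly varying inversion of the symbol map $\op{Op}$.

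Applying Lemma \ref{lem:smooth}(1) with any $\bs > \sup I$ produces an integer $p$, independent of $y$, such that every $(\al, \ell) \in \mathcal{I}_y$ has $|\al| \leqslant p$. Since $p_{\al\al}$ has total degree at most $2|\al|$, this already forces $\si^I(y) \in \pol_{\leqslant 2p}(T_yM) \otimes \op{End}(A_y)$ for all $y$. By the description of the spectrum of $\PP_y$ in Section \ref{sec:restr-delt-anti}, $\op{Op}(\si^I(y))|_{\D(T_yM) \otimes A_y} = 1_I(\PP_y)$, and this operator preserves the finite-dimensional subspace $\D_{\leqslant p}(T_yM) \otimes A_y$. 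Lemma \ref{lem:smooth}(2) applied to $I$ then gives that $F_y = \op{Im}\, 1_I(\PP_y)$ is a smooth subbundle of $\D_{\leqslant p}(TM) \otimes A$, so the orthogonal projector $1_I(\PP_y)$ is smooth as a section of $\op{End}(\D_{\leqslant p}(TM) \otimes A)$.

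To conclude, I would consider the bundle morphism
$$\Phi : \pol_{\leqslant 2p}(TM) \otimes \op{End}(A) \longrightarrow \op{Hom}\bigl(\D_{\leqslant p}(TM) \otimes A, \, \D_{\leqslant 2p}(TM) \otimes A\bigr), \quad \Phi_y(q) = \op{Op}_y(q)|_{\D_{\leqslant p}(T_yM) \otimes A_y}.$$
Smoothness of $\Phi$ follows because the defining integral \eqref{eq:def_op} of $\op{Op}_y$ only involves the Hermitian structure of $T_yM$, which varies smoothly in $y$; and since the restriction $q \mapsto \op{Op}(q)|_\D$ is known to be an isomorphism $\pol(\EE) \to \symb(\EE)$, every $\Phi_y$ is injective. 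Thus $\Phi$ is a smooth morphism of finite-rank vector bundles of constant rank, its image is a smooth subbundle, and $\Phi$ restricts to a smooth bundle isomorphism onto that image. The identity $\Phi_y(\si^I(y)) = 1_I(\PP_y)$ together with the smoothness from the previous paragraph then yields smoothness of $\si^I$. I expect the only subtle point to be the formulation of $\Phi$: one must choose the codomain large enough that $\op{Op}(q)|_\D$ really lands in it when $q \in \pol_{\leqslant 2p}$, and then invoke the standard constant-rank argument that turns a smooth fiberwise-injective morphism into a smooth isomorphism onto its image.
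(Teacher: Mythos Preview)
Your overall strategy is exactly the paper's: pull back the smoothness of the spectral projector $1_I(\PP_y)$ (Lemma~\ref{lem:smooth}) through the symbol map $\op{Op}$. The gap is in your map $\Phi$.

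First, note the paper's convention: $\pol_{\leqslant p}(\EE)$ consists of polynomials whose degree in $z$ and degree in $\con z$ are \emph{each} at most $p$, not total degree $\leqslant p$. Since $p_{\al\al}$ has bidegree $(|\al|,|\al|)$, one already has $\si^I(y)\in\pol_{\leqslant p}(T_yM)\otimes\op{End}A_y$; the index $2p$ is unnecessary.

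More seriously, your injectivity claim for $\Phi$ fails. You restrict $\op{Op}(q)$ to $\D_{\leqslant p}$, but allow $q$ to range over $\pol_{\leqslant 2p}$. For any $\be$ with $p<|\be|\leqslant 2p$ the element $p_{0\be}$ lies in $\pol_{\leqslant 2p}$ (under either degree convention), yet $\op{Op}(p_{0\be})|_{\D_{\leqslant p}}=|0\rangle\langle\be|\,|_{\D_{\leqslant p}}=0$. So $\Phi_y$ has nontrivial kernel and the constant-rank argument cannot be run. The fact that $q\mapsto\op{Op}(q)|_{\D}$ is injective on all of $\D$ does not survive restriction to the truncation $\D_{\leqslant p}$ when the domain is too large.

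The clean fix, which is what the paper does, is to match the truncations: take
\[
\op{Op}_p:\pol_{\leqslant p}(\EE)\longrightarrow\op{End}\bigl(\D_{\leqslant p}(\EE)\bigr),\qquad q\mapsto \op{Op}(q)|_{\D_{\leqslant p}(\EE)}.
\]
This is an honest isomorphism, since $(p_{\al\be})_{|\al|,|\be|\leqslant p}$ is a basis of $\pol_{\leqslant p}(\EE)$ and $(\Uu_{\al\be})_{|\al|,|\be|\leqslant p}$ is a basis of $\op{End}\D_{\leqslant p}(\EE)$. It depends smoothly on $y$, giving a bundle isomorphism $\pol_{\leqslant p}(TM)\otimes\op{End}A\simeq\op{End}(\D_{\leqslant p}(TM)\otimes A)$, and $\si^I(y)$ is the preimage of the smooth section $1_I(\PP_y)$. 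No constant-rank argument is needed.
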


\begin{proof}
Recall from Section \ref{sec:restr-delt-anti} that $\PP_y$ is the restriction of  $\PPP_y$ to $\D
(T_yM)$.    By Lemma  \ref{lem:smooth}, the spaces 
\begin{gather} \label{eq:def_F}
  F_y:= \op{Im} 1_I (\PP_y)  = \op{Span}
  ( \con{z}^\al \otimes \zeta_{\ell} , (\al, \ell) 
  \in \mathcal{I}_y)
\end{gather}
are the fibers of a subbundle of $  \D_{\leqslant p } (TM) \otimes A$ if $p$ is
sufficiently large. So the projector onto $F_y$ depends smoothly on $y$, in
other words,  the map $y \rightarrow  \op{Op} ( \si^I(y)) |_{\D (T_yM) \otimes
  A_y}$ is a smooth section of $ \op{End} ( \D _{\leqslant p } ( TM) \otimes A )$.

Now  we have an isomorphism 
\begin{gather*}
\pol_{\leqslant p }  ( \EE) \xrightarrow{\op{Op}_p}
  \op{End} ( \D _{\leqslant p } ( \EE) ), \qquad q \mapsto \text{the
    restriction of $\op{Op}(q)$ to $ \D _{\leqslant p } ( \EE)$}.
\end{gather*}
Indeed,  on one hand $(p_{\al \be}$, $|\al|, | \be| \leqslant p)$ is a basis of
  $\pol_{\leqslant p }  ( \C^n)$ and on the other hand $(\Uu_{\al \be}$, $|\al|, |\be| \leqslant p)$ is a basis of $\op{End}
  \mathcal{D}_{\leqslant p}(\C^n)$.
This gives a vector bundle isomorphism $\pol_{\leqslant p }  ( TM) \otimes
\op{End} A  \simeq \op{End} ( \D _{\leqslant p } ( TM) \otimes A )$, and concludes the proof. 
\end{proof}

Let  $\symb (TM) $ be the infinite rank vector bundle over $M$ with fibers
$\symb (T_yM)$ defined as in Section \ref{sec:symbol-spaces}. A
 section $U$ of $\symb (TM) \otimes \op{End} A$ is {\em smooth} if it has the form
\begin{gather}  \label{eq:symb_Smooth_op}
  U (y) = \op{Op} ( q (y))|_{\D (T_yM) \otimes A_y} 
\end{gather}
where $y \rightarrow q
 (y)$ is a smooth section of $\pol_{\leqslant p } ( TM) \otimes \op{End} A$ for some $p$. 
By Lemma \ref{lem:smooth_proj}, for any interval $I$ satisfying \eqref{eq:ass_I}, we have a
symbol $\pi^I \in \Ci ( M,  \symb (TM) \otimes \op{End} A)$ defined at $y$ by 
\begin{gather} \label{eq:def_pi_I} \pi^I( y ) = 1_I
(\PP_y) = \op{Op} (
\si^I(y))|_{\D (T_yM) \otimes A_y}
\end{gather}
which is the projector of $\D (T_y M) \otimes A_y$ onto the
subspace $F_y$ defined in Lemma \ref{lem:smooth}.

\subsection{Operators} \label{sec:operators}

The operator
class $\lag (A)$ was introduced in \cite{oimpart1}. It depends on $(M, \om , j)$, the prequantum
bundle $L$, that is $L$ with its metric and connection, and the auxiliary
Hermitian bundle $A$.

 $\lag (A) $ consists of families of operators $(P_k : \Ci (M, L^k \otimes A )
\rightarrow \Ci ( M , L^k \otimes A), \; k \in \N)$ having smooth Schwartz
kernels satisfying the following conditions. First, $P_k(x,y)$ is in $\bigo (
k^{-\infty})$ outside the diagonal. More precisely, for any compact subset $K$ of
$M^2 \setminus \op{diag} M$ and for any $N$, there exists $C>0$ such that
$$ |P_k (x,y) | \leqslant C k^{-N} , \qquad \forall k \in \N, \; \forall (x,y) \in K .$$
Second, for any open set $U$ of $M$ identified through a diffeomorphism with a convex open set of
$\R^{2n}$ and any unitary trivialisation $A|_U \simeq U \times \C^r$,  we have
on $U^2$ for any positive integers $N$, $k$
\begin{xalignat}{2} \label{eq:expansion}
\begin{split} P_k (x+ \xi ,x) = & \Bigl( \frac{k}{2\pi} \Bigr)^n F^{k} (x+ \xi , x)
e^{-\frac{k}{4} |\xi|_x^2} \sum_{\ell = 0 }^{ N} k^{-\ell} a_{\ell} (x,
k^{\frac{1}{2}} \xi ) \\ & + r_{N,k} (x+ \xi, x)  
\end{split}
\end{xalignat}
where the section $F: U^2 \rightarrow L \boxtimes \con{L} $ is defined as in
Section \ref{sec:schw-kern-spectr}, the coefficients $a_{\ell}(x,\xi) \in \C^r \otimes \con{\C}^r$ depend smoothly on $x$ and
polynomialy on $\xi$, with degree bounded independently of $x$, and the
remainder $r_{N,k}$ is in $\bigo (  k^{n-\frac{N+1}{2}})$ uniformly on
any compact subset of $U^2$.

The subspace $\lag ^+(A) $ of $\lag(A) $ consists of the operator families
$(P_k)$ where the coefficients $a_{\ell}$ in the local expansions
\eqref{eq:expansion} satisfy
$a_\ell (x, - \xi) = (-1)^\ell a_\ell (x,\xi)$.
The symbol map is the application $\si_0 : \lag \rightarrow
\Ci (M, \symb (TM) \otimes \op{End} A)$ given locally by
\begin{gather} \label{eq:sigma_0}
\si_0 (P) (x) = \op{Op} ( a_0 ( x, \cdot) )|_{\D (T_xM)}  \in \symb ( T_xM )
\otimes \op{End} A_x
\end{gather}
where we view $a_0 (x, \xi)$ in $ \C^r \otimes \con{\C^r} \simeq \op{End} \C^r
\simeq \op{End} A_x$. 

Recall that for any compact interval $I$ of $\R$, we  denote by $\Pi^I_k$ the corresponding
spectral projector of $k^{-1} \Delta_k$. The central result of this paper is
the following theorem.

\begin{theo} \label{theo:main_result_first}  Let $(\Pi^I_k)$ be the spectral
  projector of a formally self-adjoint operator family $(\Delta_k)$ of the form
 \eqref{eq:ass_op} with $I$ satisfying \eqref{eq:ass_I}.
 Then
  $(\Pi^I_k)$ belongs to $\mathcal{L}^+ (A)$ and has symbol $\pi^I$. 
\end{theo}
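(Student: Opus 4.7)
I follow the three-step strategy indicated in the outline of Section \ref{sec:schw-kern-spectr}. First, using Lemma \ref{lem:smooth_proj} and the construction of $\lag(A)$ from \cite{oimpart1} (via which any smooth section of $\symb(TM) \otimes \op{End} A$ is the symbol of some $(Q_k) \in \lag(A)$), pick a self-adjoint $Q_k \in \lag^+(A)$ with $\si_0(Q) = \pi^I$; the parity condition defining $\lag^+$ is compatible since the polynomials $p_{\alpha\alpha}$ in \eqref{eq:p_al-be_basis} are even in $\xi$, and symmetrising $Q_k \mapsto \tfrac{1}{2}(Q_k + Q_k^*)$ preserves both the class and the symbol.

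Second, I compare $Q_k$ with $\Pi^I_k$ by a Cauchy integral. Pick a loop $\ga \subset \C \setminus \Si$ encircling $I$ and bounded away from $\Si$; by Corollary \ref{cor:spectrum}, for $k$ large $\ga$ lies in the resolvent set of $k^{-1}\Delta_k$ and
\begin{equation*}
\Pi^I_k = \frac{1}{2\pi i} \oint_\ga ( \la - k^{-1} \Delta_k)^{-1}\, d\la.
\end{equation*}
Combining \eqref{eq:resol_est} with the fact that the fiberwise Cauchy integral $\tfrac{1}{2\pi i} \oint_\ga R_{y,k}(\la)\, d\la$ equals the spectral projector of $k^{-1}\Delta_{y,k}$ onto its $I$-eigenspaces, which after conjugation by the rescaling $S_k$ produces exactly the Schwartz kernel ansatz \eqref{eq:expansion} with leading coefficient $\si^I(y)$, one shows $\| Q_k - \Pi^I_k \| = \bigo(k^{-1/4})$ for every such $Q_k$.

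Third, I upgrade to $\bigo(k^{-\infty})$. The goal is to construct inductively self-adjoint $Q_k^{(N)} \in \lag^+(A)$ with symbol $\pi^I$ such that $(Q_k^{(N)})^2 - Q_k^{(N)}$ and $[\Delta_k, Q_k^{(N)}]$ are $\bigo(k^{-N})$ in $\lag(A)$. At each order the correction $T_k \in \lag(A)$ must solve at the symbol level a commutator equation $[\PP_y, \tau(y)] = E(y)$ for a given error symbol $E$; this is uniquely solvable on the off-diagonal blocks of the decomposition $\pi^I(y) \oplus (1 - \pi^I(y))$ of $\PP_y$, which is well defined because $\partial I \cap \Si = \emptyset$, while the diagonal freedom is fixed by requiring self-adjointness and the projection identity. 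Taking an asymptotic limit in $\lag^+(A)$ yields $Q_k^{(\infty)}$ which is a self-adjoint projector commuting with $\Delta_k$ modulo $\bigo(k^{-\infty})$. Finally, using the functional calculus on $\lag(A)$ developed in Section \ref{sec:glob-spectr-estim} together with a smooth cutoff $g$ equal to $1$ on $I$ and supported in a neighborhood of $I$ disjoint from $\Si \setminus I$, so that $g(k^{-1}\Delta_k) = \Pi^I_k$ for $k$ large by Corollary \ref{cor:spectrum}, one concludes $\|Q_k^{(\infty)} - \Pi^I_k\| = \bigo(k^{-\infty})$. Since both operators are self-adjoint projectors, this norm estimate forces $Q_k^{(\infty)} = \Pi^I_k$ modulo $\bigo(k^{-\infty})$ in $\lag(A)$, whence $\Pi^I_k \in \lag^+(A)$ with symbol $\pi^I$.

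The main obstacle is the inductive construction in the third step: verifying that the correction mechanism preserves simultaneously self-adjointness, the projection identity, and the $\lag^+$ parity condition at every order, and that the symbolic commutator equation $[\PP_y, \cdot] = \cdot$ in $\symb(T_yM) \otimes \op{End} A_y$ admits a smoothly varying inverse off the diagonal blocks of $\pi^I$. The symbol product formula for $\lag(A)$ from \cite{oimpart1} and the functional calculus results of Section \ref{sec:glob-spectr-estim} are the essential tools.
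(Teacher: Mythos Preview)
Your three-step outline matches the paper's strategy (Lemma \ref{lem:first-approximation}, Lemma \ref{lem:formal-projector}, and Section \ref{sec:end-proof-theorem}), and your description of the first two steps is essentially correct. The gap is in your third step.

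Your final move is circular. You invoke the functional calculus of Section \ref{sec:glob-spectr-estim} to conclude that $g(k^{-1}\Delta_k) \in \lag(A)$, but Corollary \ref{cor:local-spectr-estim} rests on Theorem \ref{theo:main_result_first} and Corollary \ref{cor:dsed}: it applies Theorem \ref{theo:local-spectr-estim} with $\Pi_k = 1_{]-\infty,\bs]}(k^{-1}\Delta_k)$ and $P_k = k^{-1}\Delta_k\Pi_k$, both of which are only known to lie in $\lag^+(A)$ \emph{because} of the theorem you are trying to prove. So you cannot use that machinery here.

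The paper closes the argument differently and more directly (Lemma \ref{lem:normeL2}). With $P_k = Q_k^{(\infty)}$ your almost-projector, write the block decomposition of $P_k$ relative to $\Hilb_+ = \op{Ran}\Pi^I_k$ and $\Hilb_- = \Hilb_+^\perp$. Because $[\Delta_k,P_k] = \bigo(k^{-\infty})$ and there is a uniform spectral gap between $I$ and $\spec(k^{-1}\Delta_k)\setminus I$ (Corollary \ref{cor:spectrum}), the off-diagonal blocks $P_{+-},P_{-+}$ are $\bigo(k^{-\infty})$. Then $P^2 = P + \bigo(k^{-\infty})$ forces $P_{++}^2 = P_{++} + \bigo(k^{-\infty})$ and $P_{--}^2 = P_{--} + \bigo(k^{-\infty})$; combined with the first-step estimate $P_k = \Pi^I_k + \bigo(k^{-1/4})$, this gives $P_{++} = \op{id}_{\Hilb_+} + \bigo(k^{-\infty})$ and $P_{--} = \bigo(k^{-\infty})$. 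No functional calculus is needed.

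A second point you do not address: the operator-norm estimate $\|P_k - \Pi^I_k\| = \bigo(k^{-\infty})$ is not by itself enough to place $\Pi^I_k$ in $\lag^+(A)$, since membership in $\lag(A)$ is a statement about Schwartz kernels. The paper upgrades to a pointwise $\bigo(k^{-\infty})$ on Schwartz kernels by first showing $\Delta_k^\ell(P_k-\Pi^I_k)\Delta_k^m = \bigo(k^{-\infty})$ for all $\ell,m$ (Lemma \ref{lem:norme_L2_plus}), and then using a semiclassical elliptic parametrix for $\Delta_k$ to convert this into $\biginf(k^{-\infty})$ control of the kernel.
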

The proof is given in Section \ref{sec:proof-theor-refth}. We will actually prove a stronger result where we describe the Schwartz kernel
derivatives as well. 

\subsection{The class  $\mathcal{L}^{\infty} (A)$} \label{sec:class-mathcallinfty} 
We need first a few definitions.
Consider a real number $N$. We say that a sequence $(f_k)$ of $\Ci ( U)$ with $U$ an open set of
$M$ is in $\biginf (k^{-N})$ if for any $m \in \N$, for any vector fields $X_1,
\ldots , X_m$ of $U$, for any compact subset $K$ of $U$, there exists $C>0$
such that
$$ |X_1 \ldots X_m f_k (x) | \leqslant C k^{-N + m } , \qquad \forall x \in K,
\; k \in \N.$$
Let $s =(s_k \in \Ci ( M , L^k \otimes A), \; k
\in \N)$. We say that $s \in \biginf ( k^{-N})$ if for
any unitary frames $u$ and $(v_j)_{j=1}^{r}$ of $L$ and $A$ defined over the
same open set $U$ of $M$, the local representative sequences $(f_{k,j})$ such
that $s_k = \sum f_{j,k} u^k \otimes v_j$, are in $\biginf (k^{-N})$.
We say that $s$ belongs to $\biginf (k^{\infty})$ (resp. $\biginf (k^{-\infty})$)
if $s \in \biginf (k^{-N})$ for some $N$ (resp. for any $N$). So
$$ \biginf ( k^{-\infty}) \subset \biginf (k^{-N}) \subset \biginf ( k^{-N'})
\subset \biginf ( k^{\infty}) , \qquad \text{ if } N \geqslant N'$$
Replacing $M$, $L$ and $A$ by $M^2$, $L\boxtimes \con{L}$ and $A \boxtimes
\con{A}$, we can apply these definitions to Schwartz kernels of operator
families $(P_k : \Ci (M, L^k \otimes A )
\rightarrow \Ci ( M , L^k \otimes A), \; k \in \N)$.

By definition, $\lag ^{\infty}(A)$ and $\lag_{\infty}^\infty(A) $ are the subspaces
of $\lag(A)$ consisting of operator families with a Schwartz kernel in $\biginf (
k^{\infty})$ and $\biginf ( k^{-\infty})$ respectively. By \cite[Proposition 6.3]{oimpart1}, the difference between
$\lag^{\infty}(A)$ and $\lag (A)$ is rather small because for any $P \in \lag(A)$, there exists $P'
\in \lag^{\infty}(A)$ such that the Schwartz kernels of $P-P'$ is in $\bigo (
k^{-\infty})$, that is  $P_k(x,x') =   P'_k
(x,x') + \bigo (k^{-N})$ for any $N$, with a $\bigo$ uniform on $M^2$.
Furthermore $P'$ is unique modulo $\lag^{\infty}_{\infty} (A)$. 

By  \cite[Proposition 6.3]{oimpart1}, for any $(P_k) \in
\lag^{\infty}(A)$ the asymptotic expansion \eqref{eq:expansion} holds with a
remainder $r_{N,k} $ in $\biginf (k^{n - \frac{N+1}{2}})$. 

\begin{theo} \label{theo:main_result_second}
Under the same assumptions as in Theorem \ref{theo:main_result_first}, 
  $(\Pi^I_k)$ belongs to $\mathcal{L}^{\infty}(A)$. 
\end{theo}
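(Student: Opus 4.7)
Given Theorem~\ref{theo:main_result_first}, we already have $\Pi^I_k \in \lag(A)$. By the result recalled from~\cite{oimpart1} in Section~\ref{sec:class-mathcallinfty}, there exists $Q \in \lag^\infty(A)$ with $\Pi^I_k(x,y) - Q_k(x,y) = \bigo(k^{-\infty})$ uniformly on $M^2$. Setting $R_k := \Pi^I_k - Q_k$, it suffices to show $R_k \in \biginf(k^{-\infty})$, for then $\Pi^I_k = Q_k + R_k$ has a Schwartz kernel in $\biginf(k^\infty)$ and inherits from $Q$ the local asymptotic expansion~\eqref{eq:expansion} required for membership in $\lag^\infty(A)$.

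To promote the uniform pointwise bound $R_k = \bigo(k^{-\infty})$ to a $\biginf$ bound, I would first establish polynomial-in-$k$ bounds on all derivatives of $R_k(x,y)$. Since $\Pi^I_k$ is a spectral projector onto eigenspaces with eigenvalues in $kI$, for all $a, b \in \N$ one has
\begin{gather*}
\|\Delta_k^a \Pi^I_k \Delta_k^b\|_{L^2 \to L^2} \leqslant (k\max|I|)^{a+b}.
\end{gather*}
Combined with $\op{rank}(\Pi^I_k) = \bigo(k^n)$ (from Demailly's Weyl law~\eqref{eq:weyl_law_Demailly}), this yields $\|\Delta_k^a \Pi^I_k \Delta_k^b\|_{\mathrm{HS}} = \bigo(k^{a+b+n/2})$, which equals the $L^2(M^2)$ norm of the Schwartz kernel $\Delta_{k,x}^a (\Delta_k^t)^b_y \Pi^I_k(x,y)$. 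Analogous polynomial $L^2$ bounds on $\Delta_{k,x}^a (\Delta_k^t)^b_y Q_k$ follow directly from $Q \in \lag^\infty$ by pointwise estimation of its kernel derivatives, so the same holds for $R_k$. Semiclassical elliptic regularity for $\Delta_k$---globalised in the spirit of Section~\ref{sec:globalisation} from the pointwise estimates~\eqref{eq:estimation_model}---then converts iterated-$\Delta_k$ $L^2$-control into Sobolev estimates $\|R_k\|_{H^s(M^2)} = \bigo(k^{N(s)})$ with $N(s)$ polynomial in $s$; Sobolev embedding on the $4n$-dimensional manifold $M^2$ yields polynomial-in-$k$ $L^\infty$ bounds on all derivatives of $R_k$.

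Finally I would combine the uniform estimate $\|R_k\|_{L^\infty(K)} = \bigo(k^{-N})$ (valid for every $N$ on each compact $K \subset M^2$) with the polynomial control on higher derivatives via Gagliardo--Nirenberg interpolation
\begin{gather*}
\|D^m R_k\|_{L^\infty(K)} \leqslant C \, \|R_k\|_{L^\infty(K)}^{1 - m/(m+m')}\, \|D^{m+m'} R_k\|_{L^\infty(K)}^{m/(m+m')};
\end{gather*}
taking $m'$ sufficiently large relative to $m$ absorbs the polynomial factor and yields $\|D^m R_k\|_{L^\infty(K)} = \bigo(k^{-M+m})$ for every $M$, i.e.\ $R_k \in \biginf(k^{-\infty})$. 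The main technical obstacle is the semiclassical elliptic regularity step: $\Delta_k$ is uniformly elliptic only after the rescaling $x \mapsto k^{-1/2} x$, so the $L^2$-Sobolev bounds must be tracked with explicit $k$-weights, using the globalised analogue of the pointwise resolvent estimates of Sections~\ref{sec:local-appr-resolv}--\ref{sec:globalisation}.
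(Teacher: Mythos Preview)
Your outline is sound and would give the result, but it takes a genuinely different route from the paper's argument.

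The paper does not start from an arbitrary $\lag^\infty(A)$-approximant $Q$ of $\Pi^I_k$. Instead it constructs in Lemma~\ref{lem:formal-projector} the \emph{specific} formal projector $(P_k)\in\lag^\infty(A)\cap\lag^+(A)$ satisfying $[\Delta_k,P_k]\in\lag^\infty_\infty(A)$ and $P_k^2-P_k\in\lag^\infty_\infty(A)$. These two algebraic properties are the crucial input: they first force $P_k=\Pi^I_k+\bigo(k^{-\infty})$ in operator norm (Lemma~\ref{lem:normeL2}), and then the near-commutation lets one upgrade this to $\Delta_k^\ell(P_k-\Pi^I_k)\Delta_k^m=\bigo(k^{-\infty})$ for \emph{all} $\ell,m$ (Lemma~\ref{lem:norme_L2_plus}). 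A semiclassical parametrix of $\Delta_k$ (Lemma following~\ref{lem:norme_L2_plus}) then converts this directly into the kernel bound $P_k-\Pi^I_k\in\biginf(k^{-\infty})$, with no interpolation step. Thus in the paper both Theorems~\ref{theo:main_result_first} and~\ref{theo:main_result_second} fall out simultaneously from the single estimate $P_k-\Pi^I_k\in\biginf(k^{-\infty})$.

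Your approach trades the construction of the formal projector for an interpolation argument: with a generic $Q$ you only have polynomial control on $\Delta_k^a R_k\Delta_k^b$, and you recover smallness via Gagliardo--Nirenberg against the $\bigo(k^{-\infty})$ pointwise bound. This is legitimate, and more elementary once Theorem~\ref{theo:main_result_first} is taken for granted. Two remarks. First, the elliptic-regularity step you flag as the obstacle is exactly what the paper handles with the semiclassical parametrix $\Lambda_k^\ell\varphi=Q_k\psi_1(1+(k^{-2}\Delta_k)^\ell)+S_k$; working in semiclassical Sobolev norms $H^s_k$ one gets $\|\Psi\|_{H^s_k}=\bigo(1)$ for each eigenfunction, hence $|D^j\Pi^I_k(x,y)|\leqslant Ck^{j+C_0}$ with $C_0$ independent of $j$. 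Second, because this growth is exactly linear in $j$, the kernel of $\Pi^I_k$ already lies in $\biginf(k^{\infty})$ by definition, so your interpolation step is in fact not needed; it serves only as a safety margin in case one loses track of the precise $k$-weights.
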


The proof will be given in Section \ref{sec:proof-theor-refth}. 
To end this section, let us state the following corollary of Theorems
\ref{theo:main_result_first}, \ref{theo:main_result_second} and Lemma
\ref{lem:comp_Delta}.
\begin{cor} \label{cor:dsed}
Under the same assumptions as in Theorem \ref{theo:main_result_first}, 
  $(k^{-1} \Delta_k \Pi^I_k)$ belongs to $\lag^+ (A) \cap  \lag^{\infty}(A)$
  and has symbol $\si_0 ( k^{-1} \Delta_k \Pi_k) = \PP \circ \pi^I$. 
\end{cor}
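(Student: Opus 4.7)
The proof is a direct synthesis: apply Theorems \ref{theo:main_result_first} and \ref{theo:main_result_second} to the projector, then invoke Lemma \ref{lem:comp_Delta} to push the statement through composition with $k^{-1} \Delta_k$. I anticipate the corollary is essentially a one-line deduction from these three ingredients, but let me spell out what each piece contributes.

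First, Theorem \ref{theo:main_result_first} gives $\Pi^I_k \in \lag^+(A)$ with symbol $\si_0(\Pi^I) = \pi^I$, and Theorem \ref{theo:main_result_second} gives $\Pi^I_k \in \lag^{\infty}(A)$. The main work is therefore encapsulated in Lemma \ref{lem:comp_Delta}, which I expect to state that pre-composition with $k^{-1}\Delta_k$ preserves each of the classes $\lag(A)$, $\lag^+(A)$, $\lag^{\infty}(A)$ and acts on symbols by left multiplication by the section $\PP$ (that is, $\si_0(k^{-1}\Delta_k P)(y) = \PP_y \circ \si_0(P)(y)$). Applying this lemma with $P = \Pi^I$ gives the desired class membership and $\si_0(k^{-1}\Delta_k \Pi^I_k) = \PP \circ \pi^I$.

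The substantive content lies inside the composition lemma, whose key computation I would organize as follows. In a Darboux chart at $y$, using the trivializations of Section \ref{sec:spectral-estimates}, the operator $\Delta_k$ decomposes as $\Delta_{y,k}$ plus lower-order perturbations whose coefficients $a_{ij}$, $c$ vanish at $y$, cf. \eqref{eq:main_observation}. Applied to the model Schwartz kernel $(k/2\pi)^n F^k(x+\xi,x) e^{-\frac{k}{4}|\xi|_x^2} a_\ell(x, k^{1/2}\xi)$ that appears in \eqref{eq:expansion}, the leading term $\Delta_{y,k}$ produces (after the rescaling $S_k$) precisely $k \cdot \PPP_y$ acting on the fiber coefficient, because of the conjugation identities \eqref{eq:rescale} and \eqref{eq:laplacien_creat_annihil}. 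The remaining terms $a_{ij} \nabla_{i,k} \nabla_{j,k}$, $a_i \nabla_{i,k}$, $kc$, $b$ generate factors of $k^{1/2}$ (each $\nabla_{i,k}$ contributes one such factor on rescaled expansions) that are absorbed in the next order of the expansion, since $a_{ij}(y) = c(y) = 0$. Dividing by $k$ then produces the claimed symbol $\PPP_y \circ \pi^I(y)$; and since $\pi^I(y)$ takes values in $\D(T_yM)\otimes A_y$, where $\PPP_y$ restricts to $\PP_y$ (Section \ref{sec:restr-delt-anti}), this equals $\PP_y \circ \pi^I(y)$.

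The $\lag^+$ parity condition is preserved because $\Delta_k$ is second order: each of the two rescaled derivatives $k^{-1/2}\nabla_{i,k}$ acting on the polynomial coefficient in the rescaled variable changes parity by one, so together they restore evenness; the zeroth-order piece $kV$ contributes an even term after division by $k$; the subprincipal first-order and zeroth-order pieces $a_i \nabla_{i,k}$, $b$ contribute terms of strictly lower order in $k$, which fit into higher-$\ell$ slots of the expansion and satisfy the matching parity there. The $\lag^{\infty}(A)$ membership is automatic from Theorem \ref{theo:main_result_second} because $\Delta_k$ is a differential operator with $k$-independent coefficients, so it preserves the $\biginf(k^{\infty})$ conditions on Schwartz kernels after multiplication by $k^{-1}$. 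The main potential obstacle would have been controlling the unbounded character of $\Delta_k$, but this is precisely what Lemma \ref{lem:comp_Delta} is designed to handle: the $k$-growth produced by differentiating the Gaussian factor is exactly compensated by the $k^{-1}$ prefactor.
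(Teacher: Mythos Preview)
Your proposal is correct and follows exactly the approach indicated in the paper: the corollary is stated there as an immediate consequence of Theorems \ref{theo:main_result_first}, \ref{theo:main_result_second}, and Lemma \ref{lem:comp_Delta}, with no further argument given. Your additional paragraphs explaining the mechanism behind Lemma \ref{lem:comp_Delta} are more detail than the paper supplies here (and your parity argument differs slightly in presentation from the paper's proof of that lemma, which uses an alternative kernel expansion rather than counting derivative parities directly), but the deduction of the corollary itself is identical.
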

So the first part of Theorem  \ref{theo:intro_proj} follows from Theorem
\ref{theo:main_result_first} and Corollary \ref{cor:dsed}.

\section{Proof of Theorems \ref{theo:main_result_first} and
  \ref{theo:main_result_second}} \label{sec:proof-theor-refth}

The first step, Lemma
  \ref{lem:first-approximation},  is to show that any
operator in $\lag (A)$ with symbol
$\pi^I$ is an approximation of $\Pi^I_k$ up to a $\bigo ( k^{-\frac{1}{4}})$.
  This will follow from the resolvent estimate given in \ref{cor:spectrum} and
  the Cauchy-Riesz formula. The second step, Lemma \ref{lem:formal-projector},  is the construction of a formal projector $(P_k)
  \in \lag^{+}(A)$ with symbol $\pi^I$ which almost commutes with $\Delta_k$. The third
  step, Section  \ref{sec:end-proof-theorem}, is to show that this formal
  projector $(P_k)$ is equal to $\Pi^I_k$ up to a $\bigo (k^{-\infty})$ and
  even up to a $\biginf (k^{-\infty})$ when $(P_k) \in \lag^{\infty}(A)$. 

\subsection{A first approximation} \label{sec:first-approximation}

\begin{lemme} \label{lem:first-approximation}
Under the same assumptions as in Theorem \ref{theo:main_result_first},  $\Pi_k^I = P_k + \bigo (
k^{-\frac{1}{4}})$ for any $(P_k)$ in $\lag (A)$ with symbol $\pi^I$.
\end{lemme}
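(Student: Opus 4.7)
The strategy is to construct, via the Cauchy--Riesz formula, an operator $Q_k$ in $\lag(A)$ with symbol $\pi^I$ that is close to $\Pi_k^I$, and then invoke the general fact recalled in Section \ref{sec:schw-kern-spectr} that two elements of $\lag(A)$ sharing the same symbol differ by $\bigo(k^{-\frac{1}{2}})$ in operator norm.

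First, since $I$ is compact with endpoints outside $\Si$ and $\Si$ is locally finite, one can choose a bounded rectangular loop $\gamma \subset \C \setminus \Si$ encircling once each point of $I \cap \Si$ and no other point of $\Si$, with $d(\gamma, \Si) \geqslant c_0 > 0$ and $\gamma \subset \{|\la| \leqslant \bs\}$ for some fixed $\bs$. By Corollary \ref{cor:spectrum}, for $k$ large enough $\gamma$ lies in the resolvent set of $k^{-1} \Delta_k$, and the Cauchy--Riesz formula gives
\begin{gather*}
  \Pi^I_k = \frac{1}{2\pi i}\oint_\gamma ( \la - k^{-1} \Delta_k)^{-1} \, d\la.
\end{gather*}
Applying the uniform resolvent approximation \eqref{eq:resol_est} with $r_k = k^{-\frac{1}{4}}$ and using that $d(\la,\Si)$ and the length of $\gamma$ are bounded yields
\begin{gather*}
  \Pi^I_k = Q_k + \bigo(k^{-\frac{1}{4}}), \qquad Q_k := \frac{1}{2\pi i}\oint_\gamma R^{r_k}_k(\la)\, d\la.
\end{gather*}

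By the definition \eqref{eq:def_resolvente_approchee} of $R^r_k(\la)$ and Cauchy's formula applied fiberwise,
\begin{gather*}
  Q_k = \sum_{y \in I(r_k)} \chi_{y, r_k} \, \wt\Pi^I_{y,k} \, \psi_{r_k, y},
\end{gather*}
where $\wt \Pi^I_{y,k}$ is the local representative of the spectral projector of $k^{-1}\Delta_{y,k}$ associated to eigenvalues in $I$. Since $k^{-1}\Delta_{y,k} = S_k \Delta_y S_k^*$, this equals $S_k \, 1_I(\Delta_y)\, S_k^*$, which by Lemma \ref{lem:spectre} has finite rank and, via the Bargmann-type $\op{Op}$-representation of Section \ref{sec:symbol-spaces}, has Schwartz kernel of the form $(k/2\pi)^n F_y^k(x+\xi,x) e^{-\frac{k}{4}|\xi|^2_y}$ times a polynomial in $k^{\frac{1}{2}}\xi$ whose degree is bounded uniformly in $y$ (using Lemma \ref{lem:smooth}) and whose leading coefficient encodes $\si^I(y)$. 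Summing over the partition of unity and Taylor-expanding the geometric data at each base point $y$ (the cut-offs $\chi_{y,r_k}$ and $\psi_{r_k,y}$ being supported on balls of radius $r_k = k^{-\frac{1}{4}}$, they only affect subleading terms), one recovers the form \eqref{eq:expansion} for the Schwartz kernel of $Q_k$, with leading amplitude $a_0(y,\cdot) = \si^I(y)$. Hence $Q_k \in \lag(A)$ with symbol $\pi^I$.

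For any $P_k \in \lag(A)$ with symbol $\pi^I$, the difference $Q_k - P_k$ lies in $\lag(A)$ and has vanishing symbol, so $\|Q_k - P_k\| = \bigo(k^{-\frac{1}{2}})$; combining with the first paragraph proves the lemma. The main obstacle is the last step of the second paragraph: verifying that the partition-of-unity sum, whose balls shrink like $k^{-\frac{1}{4}}$, really reassembles into an element of $\lag(A)$ with the expected symbol. One must control the discrepancy between the true geometric data at $x+\xi$ and the frozen data at $y$, which is $\bigo(r_k) = \bigo(k^{-\frac{1}{4}})$ on the support of the cut-offs, and check that this only produces $\bigo(k^{-\frac{1}{2}})$ corrections in the amplitudes of the local expansion.
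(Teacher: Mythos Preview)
Your overall strategy---contour integral plus resolvent approximation to produce an operator $Q_k$ close to $\Pi^I_k$, then compare $Q_k$ with any $P_k\in\lag(A)$ having symbol $\pi^I$---matches the paper's. The genuine gap is exactly the one you flag yourself: the claim that $Q_k\in\lag(A)$. This is not merely a technical nuisance; as stated it is essentially false. Membership in $\lag(A)$ requires an expansion \eqref{eq:expansion} with coefficients $a_\ell(x,\cdot)$ that are smooth in $x$ and \emph{independent of $k$}. Your $Q_k$ is built from a partition of unity $(\psi_{r_k,y})_{y\in I(r_k)}$ with $r_k=k^{-1/4}$: the number of cells, the index set $I(r_k)$, and the cut-offs themselves all change with $k$, so there is no $k$-independent amplitude to extract. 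The freezing error $|\sigma^I(x,\cdot)-\sigma^I(y,\cdot)|=\bigo(r_k)=\bigo(k^{-1/4})$ is of order $k^{-1/4}$, not $k^{-1/2}$, so it does not fit into the $\lag(A)$ expansion as a subleading coefficient either.

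The paper circumvents this by never asserting $Q_k\in\lag(A)$. Instead it fixes an arbitrary $(P_k)\in\lag(A)$ with symbol $\pi^I$ from the start and compares \emph{Schwartz kernels} directly: writing out explicitly the kernel of each local projector $\wt P^I_{y,k}$ (via rescaling and the $\op{Op}$ formula), it shows the pointwise estimate
\[
P_k(x',x)=\wt P^I_{y,k}(x',x)+(m_k(x',x)+1)\,\bigo(k^{-1/4})
\]
on each ball $\Psi_y(B_y(2r_k))$, where $m_k(x',x)=k^n e^{-kc\,d(x',x)^2}$; summing over the covering (bounded multiplicity) and applying the Schur test converts this into the operator-norm bound $\|P_k-Q_k\|=\bigo(k^{-1/4})$. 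Thus the comparison with $P_k$ is done at the kernel level, not via the symbol calculus of $\lag(A)$. If you rewrite your argument in that spirit---drop the claim $Q_k\in\lag(A)$ and instead prove a pointwise kernel comparison backed by Schur---you recover the paper's proof.
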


\begin{proof} {\em Step 1}.
  The proof starts from the resolvent approximation given in Corollary \ref{cor:spectrum}.
Choose a loop $\ga$ of $\C \setminus \Si$ which encircles
$I$. When $k$ is sufficiently large, by Corollary \ref{cor:spectrum}, $\ga$
does not meet the spectrum of $k^{-1} \Delta_k$. So by Riesz projection
formula and \eqref{eq:resol_est},
\begin{gather} 
\Pi_k^I   = \frac{1}{2i\pi} \int_{\ga} ( \la - k^{-1} \Delta_k )^{-1} d\lambda =
\frac{1}{2i\pi} \int_{\ga}
R^{r_k}_k ( \lambda) \, d \lambda + \bigo ( k^{-\frac{1}{4}})
\end{gather}
with $r_k = k^{-\frac{1}{4}}$. Since $R^r_k (\la) := \sum_{y \in I(r)} \chi_{y, r } \wt{R} _{y,k} ( \la)
\psi_{r,y}$, it comes that 
\begin{gather} \label{eq:step_1_bilan} 
\Pi_k^I =  \sum_{y \in I(r_k)} \chi_{y, r_k} \wt{P}^I_{y,k}  \psi_{r_k,y} + \bigo ( k^{-1/4}) 
\end{gather}
where for any $y$
$$\widetilde{P}^I_{y,k} =  \frac{1}{2i\pi} \int_{\ga} \wt{R}_{y,k} ( \la)  d
\lambda.$$
Recall that $\wt{R}_{y,k} (\la) $ is the restriction of the resolvent $(\la -k^{-1}
\Delta_{y,k})^{-1}$ to $\Ci_0 ( B_y(r) , \C^r)$ identified with
$\Ci_0 ( \Psi_y( B_y(r)), L^k \otimes A)$. So by Riesz projection formula
again, $\wt{P}^I_{y,k}$ is the restriction of the spectral projection
$$P^I_{y,k} =  \frac{1}{2i\pi} \int_{\ga}
( \la - k^{-1} \Delta_{y,k})^{-1} \, d \lambda .$$

\noindent {\em Step 2}. Let $d : M^2 \rightarrow
\R_{\geqslant 0}$ be a distance locally equivalent to the Euclidean distance
in each chart and set $m_k(x',x) := k^{n} \exp (- k c d(x',x)^2 )  $ with $c>0$.
Then by Schur test, any operator family $(Q_k:\Ci ( M, L^k\otimes A) \rightarrow
\Ci(M,L^k \otimes A)$, $k \in \N)$ having a continuous Schwartz kernel satisfying $|Q_k(x',x)| =
\bigo ( m_k(x',x))$ uniformly with respect to $x,x'$ and $k$, has a bounded operator norm, cf. \cite[proof of Lemma
5.1]{oimpart1} for more details.
Given this and \eqref{eq:step_1_bilan}, it suffices now to prove that
\begin{gather} \label{eq:but2} P_k (x',x ) = \sum_{y \in I(r)} \chi_{y, r_k} (x')
  \wt{P}^I_{y,k}(x',x)  \psi_{r_k,y} (x) + ( m_k ( x',x)+ 1) \bigo ( k^{-\frac{1}{4}}).
\end{gather}
In the sequel, we will allow the constant $c$ entering in the definition of
$m_k$ to decrease from one line to another. With this convention, for any
$p>0$, we can
replace any $\bigo ( d ^p(x',x) m_k(x',x))$ by a $\bigo ( k^{\frac{p}{2}} m_k(x',x))$.

\noindent {\em Step 3.} 
 \eqref{eq:but2} follows from
\begin{gather} \label{eq:loc_est} P_k (x',x) = \wt{P}^I_{y, k}(x',x) +  ( m_k ( x',x)+ 1) \bigo (
k^{-\frac{1}{4}}) 
\end{gather}
for all $ (x',x) \in \Psi_y
(B_{y} (2r)) \times \Psi_y( B_y (2r))$ with a $\bigo$ uniform with
 respect to all the variables, $y$ included. Indeed,  since $\op{Supp}  \psi_{r,y} \subset \Psi_y(B_{y}(r)) \subset \{\chi_{y,r} =1 \}$,
we have
$$ \chi _{y,r} (x') \psi_{r,y}(x) = \psi_{r,y} ( x) + \bigo ( d(x',x) r^{-1}), \qquad
\forall x,x' \in\Psi_y( B_{y}(2r)).$$
Recall that by \cite[Lemma 5.1]{oimpart1}, $P_k (x',x)= \bigo ( m_k(x',x) ) + \bigo (
k^{-N})$ for any $N$. Applying this to $N =1/4$ and using that  $m_k d = \bigo (
k^{-\frac{1}{2}} m_k)$ as explained above, we obtain
$$  \chi _{y,r} (x') P_k (x',x) \psi_{r,y}(x) = P_k (x',x) \psi_{r,y} ( x) + \bigo
(k^{-\frac{1}{2}} m_k (x',x) r^{-1})+ \bigo ( k^{-\frac{1}{4}}) .$$
Assume now that  \eqref{eq:loc_est} holds. Multiplying  \eqref{eq:loc_est} by
$\chi _{y,r} (x') \psi_{r,y}(x)$ and using the last equality, we obtain
$$  P_k (x',x) \psi_{r,y} ( x) = \chi _{y,r} (x') \wt{P}_{y,k}^I (x',x) \psi_{r,y}(x)  + \bigo
(k^{-\frac{1}{2}} m_k (x',x) r^{-1}) + \bigo ( k^{-\frac{1}{4}}) $$
which holds for all $x',x \in M$. Recall that the covering $\bigcup
\Psi_y(B_{y} (r))$, $y \in I(r)$ has a multiplicity bounded independently on $r$. So we can sum
these estimates without multiplying the remainder by the number of summands and we obtain
$$  P_k (x',x)  = \sum_{y \in I(r)} \chi _{y,r} (x') \wt{P}^I_{y,k} (x',x) \psi_{r,y}(x)  + \bigo
(k^{-\frac{1}{2}} m_k (x',x) r^{-1})   + \bigo ( k^{-\frac{1}{4}}).$$
This proves \eqref{eq:but2} because $r_k = k^{-\frac{1}{4}}$.

\noindent {\em Step 4.} We give a formula for the Schwartz kernel of the
spectral projector $P^I_{y,k}$. First, by the
rescaling \eqref{eq:rescaling_map}, \eqref{eq:rescale}, we have
\begin{gather} \label{eq:rescaling_proj}
P^I_{y,k}
( \xi,  \eta ) = k^n P^I_{y} (k^\frac{1}{2} \xi, k^\frac{1}{2}
\eta)
\end{gather}
with $P^I_y := P^I_{y,1}$. Second, the Schwartz kernel of $P^I_{y}$ is given by 
\begin{gather} \label{eq:proj_spec_Landau}
P^I _{y} ( \eta + \xi , \eta  ) = (2 \pi)^{-n} e^{ \frac{i}{2}
    \om_y ( \eta, \xi ) - \frac{1}{4} |\xi|_y^2 } \pi^I (y, \xi) .
\end{gather}
Indeed, by \eqref{eq:laplacien_creat_annihil}, $P^I_{y} = e^{-\frac{1}{4} |\xi|_y}\op{Op} (
  \si^I(y))  e^{\frac{1}{4} |\xi|_y}$ and it follows from \eqref{eq:def_op}
  that
\begin{xalignat}{2} \notag
  P^I_{y}  ( \xi, \eta) & =  (2 \pi)^{-n}  e^{ - \frac{1}{2} |u|^2 + u \cdot \con v -
    \frac{1}{2} |v|^2} \si^I (y, u - v)  \\  \label{eq:proj_spec_coor_comp} & = (2 \pi)^{-n} e^{ \frac{1}{2} ( u \cdot
    \con v - \con u \cdot v ) -\frac{1}{2} |u - v|^2 } \si^I (y, u - v) 
\end{xalignat}
with $(u_i)$, $(v_i)$ the complex coordinates of $\xi$ and $\eta$ defined as in
section \ref{sec:laplacian-linear}, in particular $|\xi|_y^2 =
\frac{1}{2}|u|^2$ and $|\eta|_y^2 = \frac{1}{2} |v|^2$. Since $\om_y =  i
\sum_i du_i \wedge d \con{u}_i$, \eqref{eq:proj_spec_Landau} follows from \eqref{eq:proj_spec_coor_comp}.  
Inserting \eqref{eq:proj_spec_Landau} into \eqref{eq:rescaling_proj}, it comes
that
\begin{gather} \label{eq:pi_y_k_la_formule}
P^I_{y,k}( \eta + \xi , \eta)  = \Bigl( \frac{k}{2\pi} \Bigr)^n F_y^k
(\eta + \xi , \eta) e^{ - \frac{k}{4} | \xi|_y^2} \si^I(y, k^{\frac{1}{2}} \xi )
\end{gather}
with $F_y ( \eta + \xi, \eta) = e^{  \frac{i}{2}
    \om_y ( \eta, \xi )}$. $F_y$ has the same characterization
  as the section $F$ entering in the expansion \eqref{eq:expansion}, that is  $F_y ( \eta , \eta)
  =1$ and $\R \ni t \rightarrow F_y ( \eta + t \xi , \eta )$ is flat for any
  $\xi$, $\eta$. 

\noindent {\em Step 5.} The Schwartz kernel of $P_{k}$ has the local
expansion \eqref{eq:expansion}. By \cite[Lemma 5.1]{oimpart1}, the remainder $r_{N,k}$ is in $\bigo (
k^{-\frac{N }{2}} m_k) + \bigo ( k^{-N'} ) $ for any $N'$. So in particular, 
\begin{gather}  \label{eq:projkernel}
P_k ( x+ \xi , x) = \Bigl( \frac{k}{2\pi} \Bigr)^n F^{k} (x+ \xi , x)
e^{-\frac{k}{4} |\xi|_x^2} \sigma^I (x, k^{\frac{1}{2}} \xi ) + ( m_k +1) \bigo (
k^{-\frac{1}{4}}) . 
\end{gather}

\noindent {\em Step 6.}
We now prove \eqref{eq:loc_est} by comparing \eqref{eq:pi_y_k_la_formule} and
\eqref{eq:projkernel}. So let $x, x' \in \Psi_y(B_y(2r) )$ and $\xi = x'-x$.
We will use several times that
$$ d(x,y) \leqslant C r, \qquad C^{-1} d  \leqslant  |
 \xi | \leqslant C d \quad  \text{ where } d:= d (x',x).$$ 
Let $\Phi_y : \Psi_y(B_y(r_0) ) \rightarrow T_yM$ be the inverse of
$\Psi_y$.
We have to compare $P_k ( x+ \xi, x)$ with
$\wt{P}^I_{y,k} ( x+ \xi, x) = P_{y,k}^I ( \eta + \tilde \xi, \eta)$, where
$$ \eta = \Phi_y ( x), \qquad  \eta +
\tilde \xi = \Phi_y (x + \xi)  $$
We claim that 
\begin{gather} \label{eq:xi_tilde_xi} 
\tilde \xi = \xi + \bigo ( rd + d^2).
\end{gather}
To see this, write $\tilde{\xi} = \Phi_y (x+ \xi) - \Phi_y ( x)=  L_y (x,\xi) \xi$ where
$L_y (x,0) = T_x \Phi_y$. Since $L_y(y,0) = \op{id}_{T_yM}$, we have
$$ L_y(x, \xi ) = L_y (x,0) + \bigo ( |\xi|) = \op{id}_{T_y M} + \bigo (d(x,y)+
|\xi|) .$$
So $\tilde{ \xi} = \xi + \bigo (|\xi|(d(x,y) +
|\xi|) )  = \xi + \bigo ( d(r + d))$.

Consider now a smooth function $(x, \xi) \rightarrow q (x, \xi)$ which is
polynomial homogeneous in $\xi$ with degree $\ell$. Then
$$ q(x, \xi ) = q(y, \xi ) + \bigo ( d(x,y) \; | \xi |^\ell) =  q ( y, \xi ) + \bigo
( r d^\ell)$$
and by \eqref{eq:xi_tilde_xi}, $q ( y, \xi) = q ( y, \tilde{\xi}) + \bigo (
d^\ell ( r + d))$. So
\begin{gather} \label{eq:qx-kfrac12xi-} 
q(x,  k^{\frac{1}{2}}\xi ) = q ( y,  k^{\frac{1}{2}} \tilde \xi ) + \bigo ( ( k^{\frac{1}{2}} d )^\ell
(r+ d)).
\end{gather}
Consequently
\begin{gather} \label{eq:sii--x}
\si^I ( x,  k^{\frac{1}{2}} \xi ) = \si^I ( y , k^{\frac{1}{2}}
  \tilde{\xi} ) + \bigo ( r+ d) \textstyle{\sum}   (k^{\frac{1}{2}} d) ^\ell 
\end{gather}
the sum on the right being over $\ell$ and finite.    

By \cite[Section 2.6]{oim}, the section $E (x + \xi, x) := F(x+ \xi,x) e^{- \frac{1}{4}
  |\xi|_x^2}$ depends on the coordinate choice up to a section vanishing to
third order along the diagonal. So
$$ E(x+ \xi , x) = F_y (  \eta + \tilde{\xi} , \eta ) e^{-
  \frac{1}{4}|\tilde{\xi}|_x} e^{\bigo ( d^3)} = E_y ( \eta + \tilde{\xi} ,
\eta) e^{ \bigo ( d^3 + d^2 r)}$$  
with $E_y ( \eta + \tilde{\xi} ,
\eta) := F_y ( \eta + \tilde \xi , \eta) e^{- \frac{1}{4} | \tilde \xi | _ y}$
because $ |\tilde \xi |_y^2 = | \xi|_x^2 + \bigo ( d^2 ( r+ d))$ by
\eqref{eq:qx-kfrac12xi-}. So using that $|e^z - 1 | \leqslant |z| e^{
  |\op{Re} z|}$ and that $k^n E^k ( x+ \xi , x) = \bigo (m_k)$, it comes that 
\begin{xalignat}{2} \label{eq:beginsplit--kn}
\begin{split} 
  & k^n ( E^k ( x+ \xi, x) - E_y^k( \eta + \tilde{\xi}, \eta)) = \bigo ( d^2
(d+r) m_k ) e^{ k C d^2 (d+r)} 
\\ & =  \bigo ( d^2
(d+r) m_k ) e^{ k C d^2 (d+r)}= \bigo ( k^{-\frac{5}{4}} m_k )  e^{ k C d^2
  (d+r)}  = \bigo ( k^{-\frac{5}{4}} m_k )
\end{split}
\end{xalignat}
where we have used that $d$ and $r$ are both in  $\bigo ( k^{-\frac{1}{4}})$,
and always the same convention that the constant $c$ in $m_k$ can change from
one line to another so that  $ d^p m_k = \bigo ( k^{-\frac{p}{2}} m_k)$.  Using again that $k^n E^k ( x+ \xi , x) = \bigo
(m_k)$, it follows from \eqref{eq:sii--x}, 
\begin{xalignat*}{2}  k^n E^k ( x + \xi , x) \si^I(x,  k^{\frac{1}{2}} \xi) & = k^n E^k ( x + \xi ,
x) \si^I(y,  k^{\frac{1}{2}} \tilde \xi) + \bigo ( k^{-\frac{1}{4}} m_k) \\ &
= k^n E_y^k ( \eta + \tilde \xi ,
\eta) \si^I(y,  k^{\frac{1}{2}} \tilde \xi) + \bigo ( k^{-\frac{1}{4}} m_k)  
\end{xalignat*}
by \eqref{eq:beginsplit--kn}, which ends the proof of  \eqref{eq:loc_est}
\end{proof}

\subsection{A formal projector}  \label{sec:formal-projector}

This section is devoted to the proof of the following Lemma.
\begin{lemme} \label{lem:formal-projector}
Under the same assumptions as in Theorem \ref{theo:main_result_first},  there
exists $(P_k) \in \lag^{\infty}(A) \cap \lag^{+}(A)$ unique modulo
$\lag^{\infty}_{\infty}(A) $ such that $\si_0( P_k) = \pi^I$, $P_k = P_k ^*$ for any $k$,  $P_k \equiv
  P_k^2$ modulo $\lag^{\infty}_{\infty} (A)$ and $[\Delta_k , P_k ]\equiv 0$ modulo $\lag
  ^{\infty}_{\infty}(A)$. 
\end{lemme}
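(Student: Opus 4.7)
The plan is a standard iterative construction of a formal projector in the filtered algebra $\lag^\infty(A)/\lag^\infty_\infty(A)$. First I would produce an initial self-adjoint $P^{(0)}\in\lag^\infty(A)\cap\lag^+(A)$ with $\si_0(P^{(0)})=\pi^I$: the symbol map $\lag^\infty(A)\to\Ci(M,\symb(TM)\otimes\op{End}A)$ is surjective, $\pi^I$ is self-adjoint, and the parity constraint defining $\lag^+(A)$ is compatible with $\pi^I$ since the defining polynomial $\si^I(y,\cdot)$ in \eqref{eq:def_pi_I} is even in $\xi$. Averaging any lift by its adjoint and by $\xi\mapsto -\xi$ yields the desired $P^{(0)}$.

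Next I would construct, by induction on $N\geqslant 0$, self-adjoint corrections $Q^{(N)}\in\lag^\infty(A)\cap\lag^+(A)$ of symbol order $k^{-(N+1)/2}$ so that $P^{(N+1)}:=P^{(N)}+Q^{(N)}$ annihilates the next obstructions to $P^2\equiv P$ and $[\Delta_k,P]\equiv 0$ modulo $\lag^\infty_\infty(A)$. Writing $q=\si_{N+1}(Q^{(N)})$ and letting $r_{N+1}$ and $c_{N+1}$ be the symbols of the current defects $R^{(N)}:=P^{(N)}-(P^{(N)})^2$ and $C^{(N)}:=[\Delta_k,P^{(N)}]$ at order $N+1$, the equations to solve read
\begin{gather*}
  \pi^I q+q\pi^I-q=r_{N+1},\qquad [\PP,q]=-c_{N+1}.
\end{gather*}
Relative to the block decomposition induced by $\pi^I\oplus(1-\pi^I)$, the first map acts as $+\op{id}$ on the $\pi^I(\cdot)\pi^I$ block, as $-\op{id}$ on the $(1-\pi^I)(\cdot)(1-\pi^I)$ block, and vanishes on the off-diagonal, whereas $[\PP,-]$ vanishes on the diagonal blocks (since $\pi^I=1_I(\PP)$ commutes with $\PP$) and is invertible on the off-diagonal blocks thanks to the spectral gap of $\PP$ at the endpoints of $I$ provided by assumption \eqref{eq:ass_I}. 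A short algebraic check, using the identities $[R^{(N)},P^{(N)}]\equiv 0$ and $[\Delta_k,R^{(N)}]\equiv 0$ available by induction, shows that $r_{N+1}$ is block-diagonal and $c_{N+1}$ off-diagonal, with the required self-adjointness/anti-self-adjointness properties, so the system admits a unique self-adjoint solution $q$, realised by an operator we may symmetrize in $\xi$ to keep it in $\lag^+(A)$.

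Once the $Q^{(N)}$ are constructed, I would asymptotically sum them inside $\lag^\infty(A)$, using the Borel summation available in this class by \cite[Proposition 6.3]{oimpart1}, to obtain the desired $P_k\in\lag^\infty(A)\cap\lag^+(A)$; a final symmetrization $P_k\mapsto\tfrac12(P_k+P_k^*)$ produces a genuinely self-adjoint operator with $\si_0(P_k)=\pi^I$ satisfying $P_k\equiv P_k^2$ and $[\Delta_k,P_k]\equiv 0$ modulo $\lag^\infty_\infty(A)$. For uniqueness, if $P,P'$ both satisfy these conclusions, the difference $Q=P-P'$ has $\si_0(Q)=0$ and satisfies $Q\equiv PQ+QP$ and $[\Delta_k,Q]\equiv 0$ modulo $\lag^\infty_\infty(A)$. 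At each order $m\geqslant 1$ the symbol $\si_m(Q)$ must then obey $\si_m(Q)=\pi^I\si_m(Q)+\si_m(Q)\pi^I$ (so its diagonal blocks vanish) and $[\PP,\si_m(Q)]=0$ (so its off-diagonal blocks vanish by the spectral gap argument), yielding $\si_m(Q)=0$ for every $m$, hence $Q\in\lag^\infty_\infty(A)$.

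The main obstacle I expect is the simultaneous solvability of the two equations at each inductive step: one must carefully track the block/parity/self-adjointness structure of the obstructions $r_{N+1}$ and $c_{N+1}$ throughout the induction to ensure that they lie in the appropriate ranges of the two linear maps above. The essential analytic input is assumption \eqref{eq:ass_I}, which supplies the invertibility of $[\PP,-]$ on the off-diagonal part of the symbol algebra and so permits the commutator equation to be solved at every order.
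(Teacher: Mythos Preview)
Your overall strategy---an order-by-order correction inside the filtered algebra, solving at each step a pair of linear equations for the symbol of the correction---is exactly what the paper does. The structure of the two linear maps (projector-commutator $L_1(q)=\pi^I q+q\pi^I-q$ determining the diagonal blocks, and $L_2(q)=[\PP,q]$ determining the off-diagonal ones via the spectral gap from \eqref{eq:ass_I}) is also identified correctly, as is the uniqueness argument.

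There is, however, a genuine gap at the inductive step. Your claim that $c_{N+1}$ is off-diagonal is \emph{false} in general, and the justification ``$[\Delta_k,R^{(N)}]\equiv 0$ by induction'' is not an identity that holds: nothing in the inductive hypothesis forces $k^{-1}[\Delta_k,R^{(N)}]$ to lie in $\lag^\infty_{N+2}(A)$. What is true, by Leibniz, is the exact operator identity
\[
k^{-1}[\Delta_k,\,P-(P)^2]\;=\;C-CP-PC,\qquad C:=k^{-1}[\Delta_k,P],
\]
and taking $\si_{N+1}$ of both sides gives the compatibility relation $[\PP,r_{N+1}]=c_{N+1}-\pi^I c_{N+1}-c_{N+1}\pi^I$. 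Writing $r_{N+1}=\begin{pmatrix}a&0\\0&d\end{pmatrix}$ (this part of your argument, via $[R,P]=0$, is correct), the relation forces the \emph{diagonal} blocks of $c_{N+1}$ to be $[\PP_{\mathrm{in}},a]$ and $-[\PP_{\mathrm{out}},d]$, generally nonzero. The point is that these are precisely the values of $[\PP_{\mathrm{in}},\alpha]$ and $[\PP_{\mathrm{out}},\delta]$ once the first equation has fixed $\alpha=a$, $\delta=-d$; so the two equations are \emph{compatible}, not because $c_{N+1}$ is off-diagonal, but because its diagonal part is determined by $r_{N+1}$ in exactly the right way. The paper formalises this as the exactness of a short sequence (Lemma~\ref{lem:exact_Seq}) and checks the single compatibility condition $L_2(f_1)=L_1(f_2)$ by the operator identity above; you should replace your ``$c_{N+1}$ off-diagonal'' claim by this compatibility check. (Two minor points: your commutator $C^{(N)}=[\Delta_k,P^{(N)}]$ should be normalised by $k^{-1}$ to live in the right filtration degree; and ``symmetrize in $\xi$'' is not the right mechanism for staying in $\lag^+(A)$---rather, $\lag^+(A)$ is a subalgebra preserved by composition with $k^{-1}\Delta_k$ (Lemma~\ref{lem:comp_Delta}), so the obstructions $r_{N+1},c_{N+1}$ automatically have the correct parity and admit a lift in $\lag^+(A)$.)
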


To show this, we will construct $(P_k)$  by successive approximations.
Introduce the filtration $\lag^{\infty}_p(A) := \lag^{\infty} (A)\cap \biginf (k^{-\frac{p}{2}})$, $p
\in \N$. For any $p \in \N$, we have a symbol map
$$ \si_p: \lag_p^{\infty}(A) \rightarrow
\Ci (M, \symb (TM) \otimes \op{End} A)$$
such that $\si_p ( P ) = \si_0 (k^{\frac{p}{2}}P)$ where $\si_0$ was defined in
\eqref{eq:sigma_0}. By \cite[Proposition 2.1 and Theorem 2.2]{oimpart1}, $\si_p$ is onto, $\op{ker} \si_p =
\lag_{p+1}^{\infty} (A)$ and for any sequence $( Q_p)$ of $\lag^{\infty}(A)$ such that $Q_p \in
\lag^{\infty}_p(A)$ for any $p$, there exists $Q \in \lag^{\infty}(A)$ such
that $Q = Q_0 + \ldots + Q_p$ modulo $\lag_{p+1}^{\infty}(A)$ for any $p$.  
Moreover,
\begin{enumerate}
\item if $Q$ and $Q'$
belong to $\lag^{\infty}_p(A)$ and $\lag^{\infty}_{p'}(A)$ respectively, then their
product  belongs to $\lag^{\infty}_{p+p'} (A)$. Furthermore, at any $x
\in M$, $\si_{p+p'}(QQ')(x)$ is the product of $\si_p(Q)(x) $ and
$\si_{p'}(Q')(x)$.
\item if $Q$ belongs to $\lag^{\infty}_p(A)$, then its adjoint $Q^*$ belongs
  to $\lag^{\infty}_p(A)$ with symbol $\si_p ( Q^*) (x) = \si_p (Q)(x)^*$. 
\end{enumerate}
By \cite[Theorem 2.5]{oimpart1},  $\lag^{+} (A)$ is a subalgebra of $\lag (A)$. 

\begin{lemme} \label{lem:comp_Delta}
For any $Q $ in $\lag^{\infty}_p(A)$, $(k^{-1} \Delta_k Q_k)$ and
  $(k^{-1} Q_k \Delta_k)$ belong both to $\lag^{\infty}_p(A)$ and their symbols at $x$ are
  $ \PP_x \circ \si_p(Q)(x)$ and $ \si_p(Q)(x) \circ \PP_x$. If
  $Q \in \lag^{+} (A)$ then the same holds for $(k^{-1} \Delta_k Q_k)$ and $(
  k^{-1} Q_k \Delta_k)$. 
\end{lemme}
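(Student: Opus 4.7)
Plan: The argument is local and rests on the model-operator rescaling \eqref{eq:rescale}. Fix $x_0 \in M$ and choose Darboux coordinates on a convex neighborhood $U$ of $x_0$ (identified with a neighborhood of $0 \in T_{x_0}M$), the frame $F_{x_0}$ of $L$ satisfying \eqref{eq:nabla-f_y}, and a unitary trivialisation of $A$. Then \eqref{eq:main_observation} gives
\begin{gather*}
k^{-1}\Delta_k = k^{-1}\Delta_{x_0,k} + \sum a_{ij} \bigl( k^{-\tfrac{1}{2}} \nabla_{i,k} \bigr) \bigl( k^{-\tfrac{1}{2}} \nabla_{j,k} \bigr) + k^{-\tfrac{1}{2}} \sum a_i \bigl(k^{-\tfrac{1}{2}} \nabla_{i,k}\bigr) + c + k^{-1}b,
\end{gather*}
with $a_{ij}(x_0) = c(x_0) = 0$. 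I would then prove the lemma in four steps.

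\emph{Step 1 (model piece).} Combining \eqref{eq:rescale} with the definition $\PPP_{x_0} = e^{|\xi|_{x_0}^2/4}\Delta_{x_0}e^{-|\xi|_{x_0}^2/4}$, the operator $k^{-1}\Delta_{x_0,k}$ acts on a kernel of the template form
\begin{gather*}
\Bigl(\tfrac{k}{2\pi}\Bigr)^n F_{x_0}^k(x_0+\xi,x_0)\,e^{-k|\xi|_{x_0}^2/4}\,p(x_0, k^{1/2}\xi),\quad p(x_0,\cdot)\in\pol(T_{x_0}M)\otimes\op{End}(A_{x_0}),
\end{gather*}
by replacing $p(x_0,\cdot)$ with $\PPP_{x_0}\,p(x_0,\cdot)$. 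This is an exact identity obtained by conjugating $k^{-1}\Delta_{x_0,k}$ by the rescaling $S_k$ and the Gaussian weight.

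\emph{Step 2 (lower-order corrections).} The remainder $k^{-1}\Delta_k - k^{-1}\Delta_{x_0,k}$ has the same shape as in the proof of Proposition \ref{prop:peaked-sections}: each coefficient $a_{ij}$, $c$ vanishes at $x_0$, so its Taylor expansion at $x_0$ produces powers of $(x-x_0)$ which, after $\xi\mapsto k^{-1/2}\xi$, yield polynomial factors in $\xi$ weighted by successive powers of $k^{-1/2}$; the rescaled derivatives $k^{-1/2}\nabla_{i,k}$ act on the Gaussian-times-polynomial template by producing polynomials of one higher degree, without losing any power of $k$. Hence $k^{-1}\Delta_k$ applied to the template produces an asymptotic sum of kernels of the same template form, organized by descending powers of $k^{-1/2}$, with leading coefficient $\PPP_{x_0}\,p(x_0,\cdot)$.

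\emph{Step 3 (assembling and symbol).} Write the Schwartz kernel of $Q \in \lag^{\infty}_p(A)$ using its expansion \eqref{eq:expansion}; the leading coefficient is $a_p(x, \cdot)$ with $\si_p(Q)(x) = \op{Op}(a_p(x,\cdot))|_{\D(T_xM)\otimes A_x}$. Applying Steps 1--2 term-by-term to this expansion yields, for the Schwartz kernel of $k^{-1}\Delta_k Q_k$ near $(x_0, x_0)$, an expansion of the same type, with leading coefficient $\PPP_{x_0}\,a_p(x_0,\cdot)$. Outside the diagonal, $\Delta_k$ is a second-order differential operator and therefore preserves the $\bigo(k^{-\infty})$ and $\biginf(k^{-\infty})$ bounds of $Q_k$; smoothness of all coefficients and frames in $x_0$ gives the required $\biginf$ control on $x$-derivatives of the coefficients. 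This shows $k^{-1}\Delta_k Q_k \in \lag^{\infty}_p(A)$, and since the restriction of $\PPP_{x_0}$ to antiholomorphic polynomials is $\PP_{x_0}$ (Section \ref{sec:restr-delt-anti}), the symbol identity $\si_p(k^{-1}\Delta_k Q_k)(x_0) = \PP_{x_0}\circ\si_p(Q)(x_0)$ follows.

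\emph{Step 4 (right multiplication and $\lag^+$).} For $k^{-1}Q_k\Delta_k$, use self-adjointness $\Delta_k = \Delta_k^*$ to write $k^{-1}Q_k\Delta_k = (k^{-1}\Delta_k Q_k^*)^*$; since the adjoint operation preserves $\lag^{\infty}_p(A)$ and conjugate-transposes $\si_p$, and since $\PP_{x_0}$ is self-adjoint in the Bargmann inner product, the symbol becomes $\si_p(Q)(x_0)\circ\PP_{x_0}$. For preservation of $\lag^+(A)$: $\PPP_{x_0}$ is a polynomial in the number operators $\ac_i^\dagger\ac_i$ (plus $V(x_0)$) and hence preserves the $\Z/2$-parity $p(-\xi)=(-1)^{\deg p}p(\xi)$; each correction term in Step 2 carries a matched pair of a Taylor factor (polynomial in $\xi$) and the corresponding power of $k^{-1/2}$, and one checks that the total parity matches the required $a_\ell(x,-\xi) = (-1)^\ell a_\ell(x,\xi)$ at each order.

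The main obstacle is the bookkeeping in Step 2, namely verifying that the Taylor-expansion-plus-rescaling procedure yields coefficients that fit exactly into the asymptotic expansion \eqref{eq:expansion} of $\lag^{\infty}$, and that the parity condition defining $\lag^+$ is preserved at every order. This is essentially routine once one adopts the template form of kernels in $\lag^\infty$ and tracks the powers of $k^{-1/2}$ coming from each vanishing factor and each rescaled derivative.
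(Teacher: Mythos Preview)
Your approach is correct in outline but differs from the paper's. The paper delegates membership in $\lag^{\infty}_p(A)$ to \cite[Proposition 6.3, Assertions 3c and 3d]{oimpart1} and then determines the symbol via peaked sections: since by \cite[Proposition 2.4]{oimpart1} the symbol of any operator in $\lag(A)$ is characterized by its action on the $\Phi_k(f)$, Proposition~\ref{prop:peaked-sections} immediately gives $\si_p(k^{-1}\Delta_k Q)(x) = \PP_x\circ\si_p(Q)(x)$. For the $\lag^+$ statement the paper uses the alternative expansion $P_k(x,y) = (k/2\pi)^n E^k(x,y)\sum k^{-\ell/2}b_{\ell}(x,y)$ from \cite{oimpart1} (evenness meaning $b_{\ell} = 0$ for odd $\ell$) and simply observes that applying the expression \eqref{eq:ass_op} termwise produces only integer powers of $k$, since each $\nabla_{i,k}$ acting on $E^k$ brings down a factor of $k$ times a smooth coefficient of $(x,y)$. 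This sidesteps your parity bookkeeping entirely.

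Your direct kernel computation buys independence from \cite{oimpart1} at the cost of the bookkeeping you flag. One point deserves more care than ``routine'': the Darboux chart and the frame $F_{x_0}$ are adapted to the single point $x_0$, so your template in Step~1 matches the expansion \eqref{eq:expansion} only at the base point $x = x_0$. Membership in $\lag^{\infty}_p(A)$ requires the expansion to hold uniformly for all $x$ in a fixed chart, with the frame $F(x+\xi,x)$ depending on the variable $x$; turning your pointwise-in-$x_0$ computation into that form needs a coordinate-invariance argument for $\lag$ that you do not supply. Your adjoint trick for right multiplication is clean, but note that if you also want to use it for the $\lag^+$ claim on $k^{-1}Q_k\Delta_k$ you need $\lag^+(A)$ closed under adjoints, which is true but not among the ingredients you list; otherwise the parity argument of Step~2 has to be redone with $\Delta_k$ acting in the second variable.
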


\begin{proof} By \cite[Proposition 6.3, Assertion 3c and 3d]{oimpart1},  $(k^{-1} \Delta_k Q_k)$ and
  $(k^{-1} Q_k \Delta_k)$ belong both to $\lag^{\infty}_p(A)$. To compute the
  symbol, we can use the peaked sections of Section \ref{sec:peaked-sections}. Indeed, if $\Phi_k(f)$ is defined by
  \eqref{eq:peaked_Section} with $ f \in \D(T_xM ) \otimes A_x$ and $(P_k) \in
  \lag_0 (A)$ then by \cite[Proposition 2.4]{oimpart1}, $P_k
  \Phi_k (f) = \Phi_k (g) + \bigo ( k^{-\frac{1}{2}})$ with $g = \si_0 (P_k)
  (x) f$. So the symbol of any operator of $\lag_p (A)$ is characterized by
  its action on the peaked sections. Proposition \ref{prop:peaked-sections}
  tells us how $k^{-1} \Delta_k$ acts on the peaked section and the first part
  of the result
  follows. To show that the composition with $k^{-1} \Delta_k$ preserves the
  subspace $\mathcal{L}^+ (A)$ of even operators, one uses instead of the asymptotic expansion
  \eqref{eq:expansion} the alternative expansion
  $$ P_k (x,y) = \Bigl( \frac{k}{2\pi} \Bigr)^n E^k(x,y) \sum
  k^{-\frac{\ell}{2}} b_{\ell}(x,y) + \bigo ( k^{-\infty}) ,$$
  cf. \cite[Equation (45) and Proposition 5.6]{oimpart1}.  The fact that
  $(P_k)$ is even means that $b_\ell =0 $ when $\ell$ is odd. When $(P_k) \in
  \lag^{\infty}(A)$, this expansion holds for the $\Ci$ topology, so we can
  compute the Schwartz kernel of $k^{-1} \Delta_k P_k$ by letting $k^{-1}
  \Delta_k$ act on each term of the
  expansion. Doing this with the expression \eqref{eq:ass_op}, no half power
  of $k$ appears so $k^{-1} \Delta_k P_k$ is even. The same argument works for
  $k^{-1} P_k \Delta_k$.  
\end{proof}

In the sequel, to lighten the notations, we write $\pi$ instead of $\pi^I$. 
Let $L_1$ and $L_2$ be the endomorphisms of $\Ci ( M , \symb
(TM) \otimes \op{End} A)$ defined by 
\begin{xalignat*}{2}
  L_1 (f)(x) & = \pi(x) \circ f(x) + f(x) \circ \pi(x) - f(x)\\
  L_2(f) (x) & = [ \PP_x, f (x) ] 
\end{xalignat*}
Assuming that $I$ satisfies \eqref{eq:ass_I}, $\pi \in
\Ci ( M ,\op{End} ( \D_{\leqslant p_0 } (TM) \otimes \op{End} A) $ for some $p_0$,
so that $L_1$ is well-defined meaning that $L_1( f)$ is a smooth section of
$\symb (TM) \otimes \op{End} A$ when $f$ is. 

\begin{lemme} \label{lem:exact_Seq}
  The
following  sequence is exact
\begin{gather} \label{eq:ex_seq}
0 \rightarrow \op{Symb} \xrightarrow{L} \op{Symb}
  \oplus \op{Symb}  \xrightarrow{L'} \op{Symb}
  \rightarrow 0 
\end{gather}
where  $\op{Symb} = \Ci ( M , \symb (TM) \otimes \op{End} A)$, $L(f) =  (L_1(f), L_2(f))$ and $L'(f_1, f_2) = L_2 (f_1) -
  L_1 (f_2)$. 
\end{lemme}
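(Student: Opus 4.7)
The plan is to establish exactness pointwise at each $x \in M$ by working in the orthogonal decomposition $\D(T_xM) \otimes A_x = F_x \oplus F_x^{\perp}$, where $F_x = \op{Im}\pi(x)$, and then verify that every construction depends smoothly on $x$. In this decomposition $\pi(x)$ is the block matrix $\op{diag}(\op{id}_{F_x}, 0)$, and since $[\PP_x, \pi(x)]=0$ the operator $\PP_x$ is block-diagonal, $\PP_x = \op{diag}(\PP_{11}(x), \PP_{22}(x))$, with $\sigma(\PP_{11}(x)) \subset I$ and $\sigma(\PP_{22}(x)) \cap I = \emptyset$ by assumption \eqref{eq:ass_I}. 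A direct computation shows that if $f = (f_{ij})$ is written in blocks, then
\[ L_1(f) = \begin{pmatrix} f_{11} & 0 \\ 0 & -f_{22} \end{pmatrix}, \qquad L_2(f)_{ij} = \PP_{ii} f_{ij} - f_{ij} \PP_{jj}. \]
Thus $L_1$ extracts the block-diagonal part (with a sign), while $L_2$ acts by commutation on diagonal blocks and by Sylvester-type operators on off-diagonal blocks. Since the spectra of $\PP_{11}(x)$ and $\PP_{22}(x)$ are disjoint, each Sylvester equation $\PP_{11} X - X \PP_{22} = Y$ (and its transpose) admits a unique solution given by a contour integral $X = (2\pi i)^{-1} \int_\gamma (\la - \PP_{11})^{-1} Y (\la - \PP_{22})^{-1}\, d\la$ over a loop separating the two spectra, entirely analogous to \eqref{eq:proj_symbol}.

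The identity $L' \circ L = 0$ is a two-line calculation: expanding $L_2(L_1 f) = [\PP, \pi f + f\pi - f]$ and using $[\PP, \pi] = 0$ gives $\pi[\PP,f] + [\PP,f]\pi - [\PP,f] = L_1(L_2 f)$. For injectivity of $L$, the condition $L_1(f) = 0$ forces $f_{11} = f_{22} = 0$, after which $L_2(f) = 0$ gives two Sylvester equations with disjoint spectra, so $f_{12} = f_{21} = 0$. For exactness in the middle, suppose $L_2(f_1) = L_1(f_2)$; the right-hand side has zero off-diagonal blocks, so the Sylvester equations in the off-diagonal part of $L_2(f_1)$ force $(f_1)_{12} = (f_1)_{21} = 0$. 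Then setting $f_{11} := (f_1)_{11}$, $f_{22} := -(f_1)_{22}$, and letting $f_{12}, f_{21}$ be the unique solutions of $\PP_{11} f_{12} - f_{12} \PP_{22} = (f_2)_{12}$ and $\PP_{22} f_{21} - f_{21} \PP_{11} = (f_2)_{21}$, one checks directly that $L(f) = (f_1, f_2)$. For surjectivity of $L'$, given $g \in \op{Symb}$, first choose $f_1$ with $(f_1)_{11} = (f_1)_{22} = 0$ and off-diagonal blocks solving the Sylvester equations for $g_{12}, g_{21}$, so that $L_2(f_1) - g$ is block-diagonal; then pick $f_2$ via $(f_2)_{11} = -g_{11}$, $(f_2)_{22} = g_{22}$ so that $L_1(f_2) = L_2(f_1) - g$.

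The main technical point is ensuring that every output lives in $\op{Symb} = \Ci(M, \symb(TM) \otimes \op{End} A)$, i.e. is smooth in $x$ and supported in a uniform box $\D_{\leqslant p}(TM) \otimes A$. All smooth sections of $\symb(TM) \otimes \op{End} A$ have this box property by the definition in Section \ref{sec:symbol-spaces}, the projector $\pi$ lies in $\D_{\leqslant p_0}$ by Lemma \ref{lem:smooth_proj}, and $\PP$ preserves each box since it acts diagonally in the number operator; hence $L_1$ and $L_2$ send boxed smooth sections to boxed smooth sections. Smoothness of the Sylvester inverse follows because the loop $\gamma$ separating $\sigma(\PP_{11}(x))$ from $\sigma(\PP_{22}(x))$ can be chosen locally independent of $x$ (the spectra depend continuously on $x$ and the endpoints of $I$ avoid $\Si$), so the contour integral formula yields a smooth family of solutions on finite-dimensional fibers. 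The hardest bookkeeping step will be checking this uniform-box compatibility of the Sylvester inverse and verifying that the $x$-dependent contour can be fixed locally; once this is granted the exactness is a linear-algebra exercise executed fiberwise.
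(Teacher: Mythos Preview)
Your proof is correct and follows essentially the same approach as the paper's: reduce to finite-dimensional fibers $\D_{\leqslant p}(T_xM)\otimes A_x$, decompose into $F_x\oplus F_x^\perp$ blocks, and observe that the off-diagonal Sylvester operators are invertible because the spectra of $\PP$ on $F$ and on $F^\perp$ are disjoint. The only cosmetic difference is that the paper inverts the Sylvester operators by diagonalising $\PP_y$ in an eigenbasis (so the eigenvalues of $E_1$ are the nonzero differences $\la_i-\la_j$), whereas you invoke the contour-integral formula; both yield smoothness in $x$ for the same reason, namely that the separating loop can be fixed independently of $x$ by assumption \eqref{eq:ass_I}.
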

\begin{proof}
$L ' \circ L =0$ is equivalent to $L_1 \circ L_2 = L_2 \circ L_1$, which
follows from $[\PP, \pi ]=0$. Indeed $[\PP , \pi ] =0$ implies that $
[\PP , f \pi ] = [\PP , f ] \pi$ and $[\PP , \pi f ] = \pi [\PP ,
f]$ so that
$$ L_2 ( L_1 ( f)) = [\PP, f \pi + \pi f - f ] = [\PP , f ] \pi + \pi [
\PP , f ] - [\PP , f] = L_1 ( L_2 (f))$$
Recall that
$ \op{Symb }  = \bigcup_{p \in \N}  \op{Symb}_p $ with  $\op{Symb}_p = \Ci ( M ,
 \op{End} ( \D_{\leqslant p } (TM) \otimes A)  ).$
 $L_2$ preserves each $\op{Symb}_p$ and the same holds for $L_1$ when $p$ is
  larger than $p_0$.  So we have to prove that for any $p \geqslant p_0$, the sequence \eqref{eq:ex_seq}
  with $\op{Symb}$ replaced by $\op{Symb}_p$ is exact.

  By Lemma \ref{lem:smooth}, the image of $\pi$ is a subbundle $F$ of
  $\D_{\leqslant p} (TM) \otimes A$. Let $F^{\perp}$ be the orthogonal subbundle, so
  that $\D_{\leqslant p} (TM)\otimes A = F \oplus F^{\perp}$. Write the elements of
  $\op{Symb}_p$ as block matrices according to this decomposition. The
  restrictions of $\pi$ and $\PP$ to $\op{Symb}_p$ have the particular forms
  $$ \pi = \begin{pmatrix} 1 & 0 \\ 0 & 0 \end{pmatrix} , \quad \PP
  = \begin{pmatrix} \PP_{\op{in}} & 0 \\ 0 & \PP_{\op{out}} 
  \end{pmatrix}
   $$
  Writing $ f = \begin{pmatrix} a & b \\ c & d 
  \end{pmatrix}$, we have   $$ L_1 (f) = \begin{pmatrix} a & 0 \\ 0 & -d 
  \end{pmatrix} ,  \qquad L_2 (f) = \begin{pmatrix} [\PP_{\op{in}} , a ] &
    E_1 ( b) \\ E_2( c) & [\PP_{\op{out}}, d ] 
  \end{pmatrix}$$
  with
  $$E_1 ( b) = \PP_{\op{in}} b - b\PP_{\op{out}}, \qquad E_2(c) =
  \PP_{\op{out}} c - c \PP_{\op{in}} = - E_1  ( c^{*})^{*} .$$

  Let us prove that $E_1 $ and $E_2$ are
  invertible endomorphisms of the spaces $\Ci ( M , \op{Hom} (F^{\perp}, F))$ and $\Ci (
  M , \op{Hom} ( F, F^{\perp}))$ respectively. For any $y \in M$, introduce
  an orthonormal eigenbasis $(e_i)$  of the restriction of $\PP_y$
  to $\D_{\leqslant p }
  (T_yM) \otimes A_y$.  So $\PP_y e_i = \la_i e_i$ and $F_y$ (resp. $F_y^{\perp}$) is
  spanned by the $e_{i} $ such that $\la_i \in I$ (resp. $\la_i \notin I$).
  Now the endomorphism
\begin{gather}  \label{eq:ophom--f_}
  \op{Hom} ( F^{\perp}_y, F_y) \rightarrow \op{Hom} (F^{\perp}_y, F_y),
  \qquad   b(y) \rightarrow    \PP_{\op{in}} (y) b(y) -
  b(y)\PP_{\op{out}}(y)
\end{gather}
is diagonalisable with eigenvectors $| e_i
  \rangle \langle e_j  |$ and eigenvalues $\la_i - \la_j$, where $\la_i \in I$
  and $\la_j \notin I$. Since $\la_i - \la_j \neq 0$, \eqref{eq:ophom--f_} is
  invertible for any $y$, so the same holds for $E_1$. The proof for $E_2$ is similar.

  From this, we deduce easily that the sequence is exact. In particular if
  $L'(f_1, f_2) = 0 $ with $f_i = \begin{pmatrix} a_i & b_i \\ c_i & d_i 
  \end{pmatrix}$ for $i=1$ or $2$, then $(f_1, f_2) = L(f)$ with

  $$ f = \begin{pmatrix} a_1 & E_1^{-1} (b_2)  \\ E_2^{-1} (c_2) & -d_1
  \end{pmatrix}.$$
  Observe as well that $f_1 = f_1^*$ and $f_2 = -
  f_2^*$ imply that $f = f^*$. 
\end{proof}

\begin{proof}[Proof of Lemma \ref{lem:formal-projector}]
Let $P \in \lag^{\infty} (A) $ be self-adjoint with symbol $\sigma_0 ( P ) = \pi$.
Then $R_{1} := P^2 - P$ and $R_{2} := k^{-1}
[\Delta_k, P]$ both belong to $\lag_1^{\infty} (A) $. Indeed their $\si_0$-symbols are
respectively $\pi^2 - \pi$ and $[\PP , \pi]$, and both of them  vanish.

Let us prove by induction on $m \geqslant 1$ that there exists $P$ as above such that $R_1$ and
$R_2$ are in $\lag_{m}^{\infty} (A)$.
Define $P' = P+S$ with $S \in \lag_m^\infty(A)$. Assume that $R_1$ and $R_2$ are in
$\lag^{\infty}_m(A)$. Then
\begin{gather*}
  (P')^2 - P' = R_{1} + S P + P S - S \mod  \lag_{m+1}^\infty(A) \\
  [k^{-1}\Delta_{k} , P' ] = R_{2} +  [k^{-1}\Delta_{k} , S ] 
\end{gather*}
So  $(P')^2 - P'$ and $ [k^{-1}\Delta_{k} , P' ]$ belong to
$\lag_{m} (A)$ and their $\si_m$-symbols are respectively $f_1 + L_1 (f)$ and $f_2
+ L_2(f)$
with $f = \si_m (S) $, $ f_1 = \si_m (R_1)$ and $f_2 = \si_m (R_2)$. Let us
prove that we can choose $f$ so that $f_1 + L_1(f) = 0$ and $f_2 +L_2(f) = 0$. By
Lemma \ref{lem:exact_Seq}, it suffices to check that $L_1 (f_2) = L_2 ( f_1)$.
But $L_2 (f_1)$ is the $\si_m$-symbol of $[k^{-1} \Delta _k , R_{1}]$,
$L_1(f_2)$ is the $\si_m$-symbol of $ P R_2 + R_2 P - R_2$,  and these operators
are equal as shows a direct computation. So $f$ exists. Furthermore $f =
f^*$ by the remark in the end of the proof of Lemma
\ref{lem:exact_Seq}. So we can choose $S$ self-adjoint. 

We conclude the proof with the convergence property with respect to the
filtration $\lag_m (A)$ recalled above.  Observe also that if we start with $P
\in \lag^{+} (A)$, then we end with a formal projector in $\lag^{+}(A)$.
\end{proof}

\subsection{Operator norm and pointwise estimates} 
\label{sec:end-proof-theorem}

Let us choose an operator $(P_k)$ satisfying the conditions of Lemma
\ref{lem:formal-projector}. Recall that for any operator $Q \in \lag_m(A)$, $Q_k
= \bigo ( k^{-\frac{m}{2}})$ in the sense that the operator norm of $Q_k$ is in $\bigo (
k^{-\frac{m}{2}})$ . So $P_k$ is self-adjoint, it is an almost
  projector $P_k ^2 = P_k + \bigo ( k^{-\infty})$ and it almost commutes with
  $\Delta_k$ in the sense that  $[\Delta_k , P_k ]= \bigo ( k^{-\infty})$. Furthermore, by
  Lemma \ref{lem:first-approximation},
  $P_k =
  \Pi^I_k + \bigo ( k^{-\frac{1}{4}})$. 
  
\begin{lemme} \label{lem:normeL2}
    $P_k =  \Pi_k^I + \bigo ( k^{-\infty})$.
\end{lemme}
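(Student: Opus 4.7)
The plan is to set $E_k := P_k - \Pi_k^I$, which is self-adjoint and satisfies $\|E_k\| = \bigo(k^{-1/4})$ by Lemma \ref{lem:first-approximation}, and to bootstrap this estimate to $\bigo(k^{-\infty})$ by exploiting the almost-commutation $[\Delta_k, P_k] = \bigo(k^{-\infty})$ and the almost-projection property $P_k^2 - P_k = \bigo(k^{-\infty})$.

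Since $\Pi_k^I$ is a genuine orthogonal projector commuting with $\Delta_k$, I would decompose operators in $2 \times 2$ blocks with respect to $\op{Im}\Pi_k^I \oplus \op{Im}(1 - \Pi_k^I)$:
\[
\Delta_k = \op{diag}(\Delta_k^{\op{in}}, \Delta_k^{\op{out}}), \qquad E_k = \begin{pmatrix} A_k & B_k \\ B_k^* & D_k \end{pmatrix}.
\]
By construction, $\op{sp}(k^{-1} \Delta_k^{\op{in}}) = \op{sp}(k^{-1} \Delta_k) \cap I$ and $\op{sp}(k^{-1} \Delta_k^{\op{out}})$ lies in its complement. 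Because the endpoints of $I$ do not belong to $\Si$ by \eqref{eq:ass_I}, Corollary \ref{cor:spectrum} ensures that for $k$ large these two spectra are separated by a fixed distance $\ep/2 > 0$. I would then fix a loop $\ga \subset \C$ enclosing $\op{sp}(k^{-1}\Delta_k^{\op{in}})$ only, staying at distance $\geq \ep/4$ from both spectra, on which the resolvents $R_k^{\op{in}}(\la) = (\la - k^{-1}\Delta_k^{\op{in}})^{-1}$ and $R_k^{\op{out}}(\la)$ are bounded uniformly in $k$.

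Next I would estimate the off-diagonal block. From $[\Delta_k, \Pi_k^I] = 0$ and $[\Delta_k, P_k] = \bigo(k^{-\infty})$ one has $[k^{-1} \Delta_k, E_k] = \bigo(k^{-\infty})$, whose upper-right entry yields the Sylvester-type equation
\[
k^{-1}\Delta_k^{\op{in}} B_k - B_k\, k^{-1}\Delta_k^{\op{out}} = Y_k, \qquad \|Y_k\| = \bigo(k^{-\infty}).
\]
The elementary identity $R_k^{\op{in}}(\la)\, Y_k\, R_k^{\op{out}}(\la) = R_k^{\op{in}}(\la) B_k - B_k R_k^{\op{out}}(\la)$, integrated over $\ga$, gives
\[
B_k = \frac{1}{2\pi i}\int_\ga R_k^{\op{in}}(\la)\, Y_k\, R_k^{\op{out}}(\la)\, d\la,
\]
so that $\|B_k\| = \bigo(k^{-\infty})$ thanks to the uniform resolvent bounds.

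Finally, for the diagonal blocks, the identity $\Pi_k^I E_k + E_k \Pi_k^I - E_k = \op{diag}(A_k, -D_k)$ combined with $(\Pi_k^I + E_k)^2 = (\Pi_k^I + E_k) + \bigo(k^{-\infty})$ gives
\[
A_k + A_k^2 + B_k B_k^* = \bigo(k^{-\infty}), \qquad -D_k + D_k^2 + B_k^* B_k = \bigo(k^{-\infty}).
\]
Substituting $B_k = \bigo(k^{-\infty})$, these reduce to $A_k(1 + A_k) = \bigo(k^{-\infty})$ and $D_k(D_k - 1) = \bigo(k^{-\infty})$. Since $\|A_k\|, \|D_k\| \leq \|E_k\| = \bigo(k^{-1/4})$, the factors $1 + A_k$ and $D_k - 1$ are uniformly invertible for large $k$, so $A_k, D_k = \bigo(k^{-\infty})$, whence $E_k = \bigo(k^{-\infty})$. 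The hard part will be the Sylvester step — reducing control of the off-diagonal block $B_k$ to a bounded contour integral — which essentially packages the uniform spectral gap supplied by \eqref{eq:ass_I} and Corollary \ref{cor:spectrum}; the diagonal blocks are then purely algebraic bootstrapping.
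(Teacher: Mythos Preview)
Your proof is correct and follows essentially the same architecture as the paper's: block-decompose with respect to $\op{Im}\Pi_k^I$, kill the off-diagonal blocks using $[\Delta_k,P_k]=\bigo(k^{-\infty})$ together with the spectral gap from Corollary~\ref{cor:spectrum}, then bootstrap the diagonal blocks from $\bigo(k^{-1/4})$ to $\bigo(k^{-\infty})$ via the almost-projection identity. The only cosmetic difference is in the off-diagonal step: the paper tests against eigenvectors and divides by the eigenvalue gap $|\la-\mu|\geqslant\ep$ directly, whereas you package the same estimate as a Sylvester equation solved by a contour integral (Rosenblum's formula); both are equivalent expressions of the same spectral separation.
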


\begin{proof}
  We omit the index $k$ to simplify the notations. Let $\mathcal{H}_+ =
  \op{Ran} \Pi^I$ and $\mathcal{H}_-$ be its orthogonal in $L^2 ( M, L^k \otimes
  A)$. Introduce the corresponding block decomposition of $P$ 
$$P= \begin{pmatrix}
    P_{++} & P_{+-} \\ P_{-+}  & P_{--} 
  \end{pmatrix}.$$
  We first prove that $P_{-+}$ and $P_{+-}$ are in $\bigo (
  k^{-\infty})$. 

By Corollary \ref{cor:spectrum} and assumption \eqref{eq:ass_I}, there exists $\epsilon$ such
that when $k$ is sufficiently large $\op{dist} ( I , \op{sp} ( k^{-1} \Delta_k )
\setminus I ) \geqslant \ep$. Let $\xi_{\la}$ and $\xi_{\mu}$ be two
eigenfunctions of $k^{-1} \Delta_k$ with eigenvalues $\la$ and $\mu $
respectively.  Then 
\begin{xalignat}{2} \label{eq:beginsplit--la}
  \begin{split} 
( \la - \mu ) \langle P \xi_{\la} , \xi_{\mu} \rangle & = k^{-1} \bigl( \langle P  
\Delta_k  \xi_{\la}, \xi_{\mu} \rangle - \langle P \xi_{\la} , 
\Delta_k  \xi_{\mu} \rangle \bigl) \\  & = k^{-1} \langle [P,  \Delta_k ] \xi_\la, \xi_\mu
\rangle \\ & = \bigo ( k^{-\infty}) \| \xi_\la \| \, \| \xi_{\mu} \| 
\end{split} 
\end{xalignat}
because $[P, \Delta_k ] = \bigo ( k^{-\infty})$.
Now for any $\xi_+ \in \mathcal{H}_+$ and $\xi_- \in \mathcal{H}_-$, write
their decomposition into eigenvectors $\xi_+ = \sum \xi_\la$ and $\xi_- = \sum
\xi_\mu$. So $ \| \xi_+ \|^2 = \sum \| \xi_{\la}\|^2$, $\|\xi_-\|^2 = \sum \|
\xi_\mu\|^2$ and $ \langle P \xi_-, \xi_+ \rangle =   \sum \langle P \xi_{\la}
, \xi_{\mu} \rangle$. So by \eqref{eq:beginsplit--la},
$$ \bigl| \langle P \xi_-, \xi_+ \rangle  \bigr| \leqslant \ep^{-1} \bigo(
k^{-\infty}) \sum \| \xi_{\la} \| \, \| \xi_{\mu} \| \leqslant \ep^{-1} \bigo
( k^{-\infty}) \| \xi_- \| \, \| \xi_+ \| $$
by Cauchy-Schwarz inequality. This proves that $ P_{+-}  = \bigo (
k^{-\infty})$. The same holds for its adjoint $P_{-+}$. 

Now the fact that $P^2 = P + \bigo ( k^{-\infty})$ implies  that $P_{++} ^2 =
P_{++} + \bigo ( k^{-\infty})$ and the same for $P_{--}$. Indeed,
\begin{xalignat*}{2}  
(\Pi^I P \Pi^I)^2 & = \Pi^I P \Pi^I P \Pi^I\\
 &  = \Pi^I P ^2 \Pi^I + \bigo ( k^{-\infty}) \qquad \text{ because } P_{-+} =
 \bigo ( k^{-\infty}) \\
& =  \Pi^I P
\Pi^I + \bigo ( k^{-\infty}) \qquad \text{ because } P^2 = P +
 \bigo ( k^{-\infty}) .
\end{xalignat*}
By Lemma \ref{lem:first-approximation}, $P = \Pi^I + \bigo ( k^{-\frac{1}{4}})$, so
$P_{--} = \bigo ( k^{-\frac{1}{4}})$. Then $P_{--}^2 = P_{--} + \bigo ( k^{-\infty})$
implies that
$$P_{--} =  \bigo ( k^{-\infty}).$$
In the same way, $ (\op{id}_{\mathcal{H}_+} -
P_{++}) ^2 = \op{id}_{\mathcal{H}_+} -
P_{++} + \bigo ( k^{-\infty})$ and $\op{id}_{\mathcal{H}_+} -
P_{++}  = \bigo( k^{-\frac{1}{4}})$ implies that $\op{id}_{\mathcal{H}_+} -
P_{++}  =   \bigo ( k^{-\infty})$. So
$$ P_{++} = \op{id}_{\mathcal{H}_+} + \bigo ( k^{-\infty})$$
which concludes the proof.
\end{proof}

\begin{lemme} \label{lem:norme_L2_plus}
  For any $\ell, m \in \N$, $ \Delta_k ^\ell( P_k - \Pi^I_k ) \Delta_k^m
  = \bigo ( k^{-\infty})$.
\end{lemme}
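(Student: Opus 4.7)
I follow the block decomposition used in the proof of Lemma \ref{lem:normeL2} into four blocks $P_{++}, P_{+-}, P_{-+}, P_{--}$ associated to $\mathcal{H}_+ = \op{Ran} \Pi_k^I$ and its orthogonal $\mathcal{H}_-$, and estimate each after sandwiching by $\Delta_k^\ell$ on the left and $\Delta_k^m$ on the right. The key algebraic input from Lemma \ref{lem:formal-projector} is that both $k^{-1}[\Delta_k, P_k]$ and $P_k^2-P_k$ lie in $\lag^{\infty}_{\infty}(A)$. Since an element of $\lag^{\infty}_{\infty}(A)$ has Schwartz kernel in $\biginf(k^{-\infty})$, and an application of $\Delta_k$ to such a kernel costs at most a polynomial factor in $k$ (which is absorbed by the $-\infty$), the Schur test yields $\|\Delta_k^a R \Delta_k^b\| = \bigo(k^{-\infty})$ for any $R \in \lag^{\infty}_{\infty}(A)$ and any $a, b \in \N$. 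Iterating commutators, $\|\Delta_k^a [\Delta_k^N, P_k] \Delta_k^b\| = \bigo(k^{-\infty})$ for all $a,b,N$.

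Since $[\Delta_k, \Pi_k^I] = 0$, $\|\Delta_k^\ell \Pi_k^I\| \leqslant (k\sup|I|)^\ell$. The $++$ block $\Pi_k^I P_k \Pi_k^I - \Pi_k^I$ is then bounded by $(k\sup|I|)^{\ell+m} \bigo(k^{-\infty})$ via Lemma \ref{lem:normeL2}. For the $-+$ block, the identity $P_{-+} = P_{-+}\Pi_k^I$ implies $P_{-+}\Delta_k^N = P_{-+}\Delta_k^N \Pi_k^I$, hence $\|P_{-+}\Delta_k^N\| \leqslant \|P_{-+}\|(k\sup|I|)^N = \bigo(k^{-\infty})$; combined with the commutator bound from the previous paragraph, $\|\Delta_k^a P_{-+}\Delta_k^b\| = \bigo(k^{-\infty})$, and the $+-$ block follows by taking adjoints.

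The main obstacle is the $--$ block, where $\Delta_k$ is unbounded on both sides. My plan is to exploit the $--$ component of the relation $P_k^2 = P_k + Q_k$ with $Q_k \in \lag^{\infty}_{\infty}(A)$, which reads
\begin{equation*}
P_{--} = P_{--}^2 + P_{-+} P_{+-} - (I-\Pi_k^I)\, Q_k\, (I-\Pi_k^I).
\end{equation*}
Multiplying by $\Delta_k^N$ on the left, the last two terms are $\bigo(k^{-\infty})$ by the preceding estimates (applied respectively to $Q_k \in \lag^{\infty}_{\infty}(A)$ and to $P_{-+}$, $P_{+-}$), while the first factors as $(\Delta_k^N P_{--})\cdot P_{--}$. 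This yields the self-improving inequality
\begin{equation*}
\|\Delta_k^N P_{--}\| \leqslant \|P_{--}\| \cdot \|\Delta_k^N P_{--}\| + \bigo(k^{-\infty}).
\end{equation*}
Since $P_k$ and $\Pi_k^I$ both have smooth Schwartz kernels on the compact manifold $M$, $\|\Delta_k^N P_{--}\|$ is a priori finite for each $k$; for $k$ large enough that $\|P_{--}\| \leqslant 1/2$, the self-referential term can be absorbed and one concludes $\|\Delta_k^N P_{--}\| = \bigo(k^{-\infty})$. Taking adjoints yields $\|P_{--}\Delta_k^M\| = \bigo(k^{-\infty})$, and applying the displayed identity once more, now sandwiched by $\Delta_k^\ell$ and $\Delta_k^m$, expresses $\Delta_k^\ell P_{--}\Delta_k^m$ as a sum of three terms, each of which is a product of operators whose norms are $\bigo(k^{-\infty})$. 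Summing the four blocks gives the claim.
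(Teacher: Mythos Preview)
Your proof is correct. Both you and the paper rely on the same two facts---that $P_k^2-P_k$ and $[\Delta_k,P_k]$ lie in $\lag^{\infty}_{\infty}(A)$, and that $\|\Delta_k^a R\,\Delta_k^b\|=\bigo(k^{-\infty})$ for $R\in\lag^{\infty}_{\infty}(A)$---but the organization is quite different. You work blockwise with respect to $\mathcal{H}_+\oplus\mathcal{H}_-$, handling the $++$, $+-$, $-+$ blocks directly and then running a self-improving (bootstrap) inequality on the $--$ block, using the almost-idempotent relation to absorb $\|\Delta_k^N P_{--}\|$ into itself once $\|P_{--}\|<1$. The paper instead avoids the block decomposition entirely: it first uses the commutator relation to reduce to the one-sided case $m=0$, and then writes
\[
\Delta_k^{\ell}(P_k-\Pi_k^I)=\Delta_k^{\ell}P_k\,(P_k-\Pi_k^I)+(P_k-\Pi_k^I)\,\Delta_k^{\ell}\Pi_k^I+\bigo(k^{-\infty}),
\]
which is immediately $\bigo(k^{\ell})\cdot\bigo(k^{-\infty})$ by Lemma~\ref{lem:normeL2} together with the polynomial bounds $\Delta_k^{\ell}P_k=\bigo(k^{\ell})$ and $\Delta_k^{\ell}\Pi_k^I=\bigo(k^{\ell})$. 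The paper's route is shorter and needs no bootstrap; yours is more systematic and makes transparent exactly which spectral block carries the difficulty.
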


\begin{proof}
  On one hand, we have
\begin{gather} \label{eq:majoration}
\Delta_k^{\ell} P_k =
  \bigo ( k^{\ell}), \qquad \Delta_k^{\ell} \Pi^I_k = \bigo ( k^{\ell})
\end{gather}
the first estimate being  a consequence of $((k^{-1} \Delta_k)^{\ell} P_k )\in
  \lag^{\infty} (A)$, the second one is merely that $\Pi^I_k$ is
  the spectral projector of $k^{-1} \Delta_k$ for the bounded interval $I$. 

  On the other hand, since for any $Q \in \lag^{\infty}_{\infty} (A)$,
  $\Delta_k^\ell Q \Delta_{k}^m $ belongs to $\lag^\infty_\infty(A)$ as well, we
  have 
    \begin{gather} \label{eq:impr}
    \begin{split} 
  \Delta_k^\ell P_k^2 \Delta_k^m = \Delta_k^\ell P_k \Delta_k^m + \bigo (
  k^{-\infty})\\
  \Delta_k ^\ell [k^{-1} \Delta_k, P_k]  \Delta_k^m = \bigo (
  k^{-\infty})
\end{split}
\end{gather}
By the first equality,  $\Delta_k^\ell P_k \Delta_k^m =
\Delta_k^{\ell+m} P_{k} + \bigo ( k^{-\infty})$. Since $[\Delta_k , \Pi^I_k ]=
0$, it suffices to prove the final
result for $m=0$, that is $\Delta^\ell_k ( P_k -\Pi_k^I)  =\bigo ( k^{-\infty})$. We have
\begin{xalignat*}{2} 
\Delta_k^{\ell} ( P_k - \Pi_k^I ) & = \Delta_k^\ell ( P^2_k - \Pi_k^I) + \bigo(
k^{-\infty}) \qquad \text{ by
} \eqref{eq:impr} \\ 
& = \Delta_k^\ell P_k(P_k -  \Pi_k^I) +  \Delta_k^\ell ( P_k \Pi_k^I) - \Delta_k^\ell \Pi_k^I + \bigo(
k^{-\infty}) \\
& = \Delta_k^\ell P_k(P_k -  \Pi_k^I) +  P_k \Delta_k^\ell  \Pi_k^I - \Delta_k^\ell \Pi_k^I + \bigo(
k^{-\infty})  \quad \text{by
} \eqref{eq:impr} \\
& = \Delta_k^\ell P_k(P_k -  \Pi_k^I) +  (P_k - \Pi_k^I) \Delta_k^\ell  \Pi_k^I + \bigo(
k^{-\infty}) \\
& = \bigo ( k^{\ell} ) \bigo ( k^{-\infty}) 
\end{xalignat*}
by 
\eqref{eq:majoration}  and Lemma \ref{lem:normeL2}. 
\end{proof}

We are now ready to conclude the proof of Theorems
\ref{theo:main_result_first} and \ref{theo:main_result_second}: we
will show that the Schwartz kernel of $P_k - \Pi_k^I$ is in $\biginf (
k^{-\infty})$, in the sense of Section \ref{sec:class-mathcallinfty}.

Choose two open sets $U$ and $U'$ of $M$ equipped both with a set of
coordinates and  unitary trivialisations of $L$ and $A$, so that we can identify
the sections of $L^k \otimes A$ on $U$ with functions. Let $\varphi \in \Ci_0
( U)$, $\varphi' \in \Ci_0(U')$. Then $\varphi ( P_k - \Pi_k^I) \varphi'$ can be
viewed as an operator of $\R^{2n}$. Introduce the differential operator 
$$ \La_k = 1  -
k^{-2} \sum_{i=1}^{2n} \partial_{x_i}^2 $$
acting on $\Ci ( \R^{2n})$. 

\begin{lemme}   For any $\ell\in \N$,
\begin{gather} \label{eq:to_prove}
  \La_k^\ell \, \varphi ( P_k - \Pi_k^I)
  \varphi' \, \Lambda_k^\ell = \bigo ( k^{-\infty}).
\end{gather}
  Consequently, the Schwartz kernel of $\varphi ( P_k - \Pi_k ^I) \varphi'$ is
  in 
  $\biginf ( k^{-\infty})$. 
\end{lemme}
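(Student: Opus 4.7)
The statement has two parts: the operator-norm bound \eqref{eq:to_prove} and the pointwise conclusion on the Schwartz kernel. The plan is to deduce \eqref{eq:to_prove} from Lemma~\ref{lem:norme_L2_plus} via an elliptic-regularity estimate adapted to the $k$-dependent scaling of $\Delta_k$, and then to extract the pointwise statement by Sobolev embedding.

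For \eqref{eq:to_prove}, the key observation is that in the local trivialization where $\nabla_{i,k} = \partial_{x_i} + i k \beta_i$ for some real $\beta_i$, the local form \eqref{eq:ass_op} exhibits $\Delta_k$ as elliptic of order $2$ with principal symbol $\tfrac{1}{2} |\xi|_g^2$ (independent of $k$) and with first- and zeroth-order coefficients of sizes $\bigo(k)$ and $\bigo(k^2)$ respectively. A standard interior elliptic estimate, iterated on $\ell$ using nested cutoffs $\chi_0, \chi_1, \ldots, \chi_\ell \in \Ci_0(U)$ with $\chi_j \equiv 1$ on $\op{supp} \chi_{j-1}$, yields
\[ \| \chi_0 u \|_{H^{2\ell}(\R^{2n})} \leqslant C \sum_{j = 0}^{\ell} k^{2(\ell - j)} \| \Delta_k^j \chi_\ell u \|_{L^2} . \]
This $k$-scaling is precisely matched by the binomial expansion $\Lambda_k^\ell = \sum_{j = 0}^\ell \binom{\ell}{j} (- k^{-2} \Delta_0)^j$, which gives $\| \Lambda_k^\ell v \|_{L^2} \leqslant C \sum_j k^{-2j} \| v \|_{H^{2j}}$. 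Chaining these two estimates with $\chi_0 \equiv 1$ on $\op{supp}\varphi$ produces
\[ \| \Lambda_k^\ell \varphi u \|_{L^2} \leqslant C \sum_{j = 0}^\ell k^{-2j} \| \Delta_k^j \chi_\ell u \|_{L^2} . \]
Applying this with $u = (P_k - \Pi_k^I) \varphi' \Lambda_k^\ell v$ and running an analogous bound on the right via the adjoint (using self-adjointness of $\Lambda_k$, $\Delta_k$ and $P_k - \Pi_k^I$) reduces \eqref{eq:to_prove} to a finite sum of terms $k^{-2(j + j')} \| \Delta_k^j (P_k - \Pi_k^I) \Delta_k^{j'} \|_{op}$, each of which is $\bigo(k^{-\infty})$ by Lemma~\ref{lem:norme_L2_plus}.

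For the Schwartz-kernel conclusion, $\Lambda_k^{-\ell}$ is the Fourier multiplier $(1 + k^{-2}|\xi|^2)^{-\ell}$, which a direct frequency-space computation shows maps $H^{-2\ell}(\R^{2n})$ to $L^2(\R^{2n})$ with norm $\bigo(k^{2\ell})$. Combined with \eqref{eq:to_prove}, this shows that $\varphi (P_k - \Pi_k^I) \varphi'$ is bounded from $H^{-2\ell}$ to $H^{2\ell}$ with norm $\bigo(k^{-\infty})$ for every $\ell$, the polynomial losses $k^{2\ell} \cdot k^{2\ell}$ being absorbed into the $\bigo(k^{-\infty})$. The Sobolev embedding $H^{2\ell} \hookrightarrow C^m(\R^{2n})$ (valid for $2\ell > 2n + m$) then shows, for every $m$, that the Schwartz kernel of $\varphi (P_k - \Pi_k^I) \varphi'$ is a $C^m$-function with $C^m$-norm $\bigo(k^{-\infty})$, which is precisely what membership in $\biginf(k^{-\infty})$ requires.

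The main technical obstacle is the commutator bookkeeping in the reduction step: each commutator $[\Delta_k^j, \chi_\ell]$ is a differential operator of order $\leqslant 2j - 1$ whose coefficients carry factors of $k$ up to $k^{2j-1}$, coming from the lower-order coefficients of $\Delta_k$, so naive commutation would lose powers of $k$. The nested-cutoff scheme together with the iterated elliptic estimate is arranged precisely so that each such loss is compensated by the $k^{-2}$-gain of a corresponding factor of $\Lambda_k$; this is why the same scheme is applied in parallel on both sides.
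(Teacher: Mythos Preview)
Your overall strategy matches the paper's: reduce \eqref{eq:to_prove} to Lemma~\ref{lem:norme_L2_plus} by an ellipticity argument, then extract the kernel statement by Sobolev-type bounds. The implementation differs. The paper does not iterate interior elliptic estimates; instead it treats both $\Lambda_k^\ell\varphi$ and $\psi_1(1+(k^{-2}\Delta_k)^\ell)$ as semiclassical pseudodifferential operators of order $2\ell$ (with $h=k^{-1}$), notes that the latter is elliptic on $\op{supp}\varphi\times\con{\R^{2n}}$, and uses the symbolic calculus to factorize
\[
\Lambda_k^\ell\varphi = Q_k\,\psi_1(1+(k^{-2}\Delta_k)^\ell) + S_k\psi_2
\]
with $Q_k$ of order zero (hence $L^2$-bounded uniformly in $k$) and $S_k$ residual. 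With the analogous factorization on the right, one lands directly on $(1+(k^{-2}\Delta_k)^\ell)(P_k-\Pi_k^I)(1+(k^{-2}\Delta_k)^\ell)$, handled by Lemma~\ref{lem:norme_L2_plus}. The cutoffs $\psi_1,\psi_2$ sit \emph{outside} the $\Delta_k$-powers, so no commutator bookkeeping is needed. For the kernel, the paper uses $k$-weighted Sobolev norms and the pairing $R_k(x,y)=\delta_x(R_k\delta_y)$, which is equivalent to your Sobolev-embedding argument.

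Your more elementary route is viable, but one point needs correction. The iterated elliptic estimate should have the cutoff \emph{outside} the $\Delta_k$-powers, i.e.\ $\|\chi_\ell\Delta_k^j u\|$ rather than $\|\Delta_k^j\chi_\ell u\|$ as you wrote. With the cutoff inside, the duality step yields $\|\Delta_k^{j'}\chi'_\ell(P_k-\Pi_k^I)\chi_\ell\Delta_k^{j}\|$, and the sandwiched cutoffs block a direct appeal to Lemma~\ref{lem:norme_L2_plus}; commuting them out produces exactly the unbounded terms your last paragraph flags but does not actually control. With the cutoff outside, the chain gives $\|\chi_\ell\Delta_k^{j}R_k\varphi'\Lambda_k^\ell v\|\leqslant\|\Delta_k^{j}R_k\varphi'\Lambda_k^\ell v\|$, and after dualizing and repeating on the $\varphi'$ side one arrives cleanly at $\|\chi'_\ell\Delta_k^{j'}R_k\Delta_k^{j}w\|\leqslant\|\Delta_k^{j'}R_k\Delta_k^{j}\|\,\|w\|$, to which Lemma~\ref{lem:norme_L2_plus} applies directly. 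With that fix your argument goes through; the paper's pseudodifferential factorization simply packages the same ellipticity in a way that sidesteps the commutator issue from the start.
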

\begin{proof} We will use basic results on semiclassical pseudodifferential
  operators of $\R^{2n}$, with the semiclassical parameter usually denoted by
  $h$ equal here to $k^{-1}$. Choose $\psi_1, \psi_2 \in \Ci_0(U)$ such
  that $\op{supp} \varphi \subset \{ \psi_1 =1 \}$ and $\op{supp} \psi_1 \subset
  \{ \psi_2 = 1 \}$.  The operator $\psi_1 ( 1+ (k^{-2} \Delta_k)^{\ell})$, viewed as an operator
  of $\R^{2n}$, is a semiclassical differential operator with principal
  symbol $\psi_1 (H^\ell +1 ) $ where $H$ is the symbol of $\Delta_k$, so
 $$ H(x,\xi) = \sum  g^{ij}(x) ( \xi_i + \al_i(x) )(\xi_j + \al_j(x)) ,$$ 
with $-i \sum \al_i dx_i$ the
  connection one-form of $L$ in the trivialisation used to identify
  sections with functions. $\varphi \La_k^\ell$ is also a semiclassical differential
  operator with symbol $\varphi (x) \langle \xi \rangle^{2 \ell}$. The symbol $\psi_1 ( H^\ell +1)$ being elliptic on $\op{supp}
  \varphi \times \con{\R^n}$, we can factorise
  $$ \Lambda_{k}^{\ell} \varphi = Q_k \psi_1  ( 1+ (k^{-2} \Delta_k)^{\ell}) + S_k $$
with $Q_k$ a zero order semiclassical pseudodifferential operator and $S_k$ in
the residual class. To do this, we only need the pseudodifferential calculus
in the usual class $S^k_{1,0}(T^* \R^{2n}) $ of
symbols, cf. as instance \cite[Section E.1.5]{DZ}. Composing with $\psi_2$ 
\begin{gather} \label{eq:11}
 \La_{k}^{\ell} \varphi = Q_k \psi_1  ( 1+ (k^{-2} \Delta_k)^{\ell}) + S_k
\psi_2 
\end{gather}
Similarly, we have
\begin{gather} \label{eq:22}
 \varphi' \La_k^{\ell } = \psi_1' ( 1+ (k^{-2} \Delta_k)^{\ell}) Q'_k +  \psi_2'
S'_k 
\end{gather}
Now by Lemma \ref{lem:norme_L2_plus},
\begin{gather} \label{eq:33}
( 1+ (k^{-2} \Delta_k)^{\ell}) (P_k - \Pi^I_k )  ( 1+ (k^{-2} \Delta_k)^{m})
= \bigo ( k^{-\infty}) 
\end{gather}
and by the usual result on boundedness of pseudodifferential operators, cf.
\cite[Proposition E.19]{DZ}, $Q_k , Q'_k = \bigo ( 1)$ and $S_k ,
S_k' = \bigo (k^{-\infty})$. We deduce \eqref{eq:to_prove} easily with \eqref{eq:11},
\eqref{eq:22} and \eqref{eq:33}.

Now let $H^m_k $ be the Sobolev space $H^m ( \R^{2n})$ with the $k$-dependent norm $\| u
\|_{H^m_k} = \| \langle k^{-1} \xi \rangle \hat{u}(\xi) \|_{L^2(\R^{2n})}$.
Then $\La_k^\ell$ is an isometry $H^m_k \rightarrow H^{m-2\ell}_k$. So
\eqref{eq:to_prove} tells us that the operator norm $H^{-2 \ell}_k \rightarrow H^{2
  \ell}_k $ of $R_k = \varphi ( P_k - \Pi_k^I)
  \varphi'$ is in $\bigo ( k^{-\infty})$. The Schwartz kernel of $R_k$ at
  $(x,y)$ being equal to $\delta_x (  R_k \delta_y)$ and the Dirac $\delta_x$
  belonging to $H^{-m}_k$ with a norm in $\bigo ( k^{2n})$ for any $m >n$, it
  comes that $ R_k ( x,y) = \bigo ( k^{-\infty})$. Similarly, $\partial_x^\al
  \partial_y^\be R_k (x,y) = \bigo ( k^{-\infty})$ for any $\al, \be \in
  \N^{2n}$ because the $H_k^{-m}$-norm of $ \partial ^{\al} \delta_x$ is a $\bigo
  (k^{2n})$ as soon as $m \geqslant n + |\al|$.     
\end{proof}

\section{Toeplitz operators} \label{sec:glob-spectr-estim}

Let $F$ be  a vector subbundle of $\D
_{\leqslant p} ( T M) \otimes A$ for some $p$. 
Let $(\Pi_k) \in \lag (A)$ such that for each $k$, $\Pi_k$ is a self-adjoint
projector of $\Ci ( M , L^k \otimes A)$ and for any $x \in M$, the symbol
$\pi(x) = \si_0 ( \Pi_k) (x) $ is the
orthogonal projector onto $F_x$. Let $\Hilb_k$ be the image of $\Pi_k$. 

The corresponding Toeplitz operators are the  $(P_k) \in \lag
(A)$ such that $\Pi_k P_k \Pi_k = P_k$. The symbol $\si_0 (P)
(x)$ of such an operator satisfies
$$\pi (x) \si_0 (
P ) (x) \pi (x)= \si_0 ( P ) (x).$$
So $ \si_0 ( P) (x) =  f(x) \pi (x)$ with $f(x) \in \op{End}
F_x$. This section $f$ of $\op{End} F$ can be considered as the Toeplitz
symbol of $(P_k)$.

We will establish several spectral results for these Toeplitz operators. Applied to the
spectral projector $\Pi_k = 1_{[a,b]} ( k^{-1} \Delta_k)$ and $P_k = k^{-1}\Delta_k
 \Pi_k$, this will complete the proofs of Theorems \ref{theo:intro_weyl_global},
 \ref{theo:intro_spec}, \ref{theo:intro_weyl_local} and \ref{theo:intro_proj} stated in the introduction. 

\subsection{Global spectral estimates} 

\begin{theo} \label{theo:glob-spectr-estim}
  \begin{enumerate} 
\item   
  When $k$ is sufficiently large, $\op{dim} \Hilb_k = \op{RR} ( L^k \otimes F)$.
\item
  For any $(P_k) \in \lag (A)$ such that $P_k ^* = P_k$ and $\Pi_k P_k \Pi_k =
  P_k$ for any $k$, we have for any $\Psi \in \Hilb_k$ with $\| \Psi \| = 1
  $ that
\begin{gather} \label{eq:Garding}
  \inf_M  f_- + \bigo ( k^{-\frac{1}{2}}) \leqslant \langle P_k \Psi ,
  \Psi \rangle  \leqslant \sup_M f_+ + \bigo (k^{-\frac{1}{2}})
\end{gather} 
where the $\bigo$'s are uniform with respect to $\Psi$ and for any $x \in M$,
 $f_{-} (x)$ and $f_{+} (x)$ are the smallest and largest eigenvalues of the
 restriction of $\si_0 (P) (x)$ to $F_x$. 
\end{enumerate}
\end{theo}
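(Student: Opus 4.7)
For part (1), this is essentially a direct consequence of the Riemann-Roch formula for projectors in the class $\lag(A)$ established in \cite{oimpart1}. Indeed $(\Pi_k)$ is a self-adjoint projector family in $\lag(A)$ whose symbol at each $y$ is the orthogonal projector onto the fibre $F_y$ of a subbundle $F \subset \D_{\leqslant p}(TM) \otimes A$. Any two self-adjoint projectors in $\lag(A)$ with the same symbol have equal rank for $k$ sufficiently large: they differ by an operator of norm $\bigo(k^{-\frac{1}{2}})$, so a standard homotopy argument pins down the rank. The computation of that rank, performed in \cite{oimpart1} by comparing with a spectral projector of an auxiliary Dirac-type operator and invoking the index theorem, yields $\op{RR}(L^k \otimes F)$. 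Hence $\dim \Hilb_k = \op{rank}(\Pi_k) = \op{RR}(L^k \otimes F)$.

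For part (2), applying the desired bound to $-P_k$ exchanges the two inequalities, so it suffices to prove the upper bound. Set $c := \sup_M f_+$ and introduce the self-adjoint Toeplitz operator $Q_k := c \Pi_k - P_k$, whose Toeplitz symbol $G(y) := c\,\op{id}_{F_y} - f(y)$ is Hermitian positive semi-definite at every point. The upper bound is equivalent to
\begin{gather*}
  \langle Q_k \Psi, \Psi \rangle \geqslant -C k^{-\frac{1}{2}} \|\Psi\|^2, \qquad \forall \Psi \in \Hilb_k,
\end{gather*}
and the plan is to realise $Q_k$ approximately as a square $R_k^* R_k$, so that the obvious positivity of $\|R_k \Psi\|^2$ yields the estimate.

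For $\ep > 0$ the Hermitian section $h_\ep := (G + \ep\,\op{id}_F)^{1/2}$ of $\op{End} F$ is smooth since $G + \ep\,\op{id}_F$ is positive definite. Let $R_{k,\ep}$ be a self-adjoint Toeplitz operator in $\lag(A)$ with Toeplitz symbol $h_\ep$. By the product and adjoint calculus in $\lag(A)$, $R_{k,\ep}^* R_{k,\ep}$ is a self-adjoint Toeplitz operator whose Toeplitz symbol equals $h_\ep^2 = G + \ep\,\op{id}_F$ up to a term of operator norm at most $C_\ep k^{-\frac{1}{2}}$ controlled by the first derivatives of $h_\ep$. Since $Q_k$ is by definition the Toeplitz operator with symbol $G$ and $\ep \Pi_k$ the one with symbol $\ep\op{id}_F$, we obtain $R_{k,\ep}^* R_{k,\ep} = Q_k + \ep\Pi_k + E_{k,\ep}$ with $\|E_{k,\ep}\| \leqslant C_\ep k^{-\frac{1}{2}}$. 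Applying this to $\Psi \in \Hilb_k$ and using $\|R_{k,\ep}\Psi\|^2 \geqslant 0$ yields $\langle Q_k \Psi, \Psi\rangle \geqslant -\ep \|\Psi\|^2 - C_\ep k^{-\frac{1}{2}}\|\Psi\|^2$, and letting $\ep \to 0$ gives the claim provided $C_\ep$ remains bounded as $\ep \to 0$.

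The crux of the argument is thus this uniform control on $C_\ep$, which amounts to uniform boundedness of the first differential of $h_\ep$ independent of $\ep$. In the scalar case positivity of $G$ forces $dG$ to vanish on the zero set of $G$, and a short computation using $G(y) = \bigo(d(y, G^{-1}(0))^2)$ near a zero gives $\|dh_\ep\|_{L^\infty} \leqslant C$ independent of $\ep$. In the matrix-valued setting one must additionally handle eigenvalue crossings of $G$, near which the spectral square root fails to be smooth; the plan is to work in a local eigenframe of $G$ away from crossings, exploit the fact that each eigenvalue $\lambda_i$ of $G$ is itself non-negative and therefore vanishes quadratically at its zeros, so that each scalar root $(\lambda_i + \ep)^{1/2}$ has uniformly bounded derivative, and patch local models via a partition of unity. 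This local-to-global step, and the treatment of eigenvalue crossings in particular, is where the bulk of the delicate analytical work is expected to lie.
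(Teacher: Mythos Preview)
Your treatment of part (1) is acceptable: it is essentially the same route the paper takes, namely comparing $(\Pi_k)$ with a second projector $(\Pi'_k)$ built from a spin-c Dirac operator via an approximate unitary in $\lag(A,A')$, and reading off the rank from the Atiyah--Singer index theorem. The homotopy-of-projectors remark is a slightly different packaging of the same argument.

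For part (2), however, there is a genuine gap. Your square-root strategy gives
\[
\langle Q_k\Psi,\Psi\rangle \geqslant -\ep\|\Psi\|^2 - C_\ep k^{-1/2}\|\Psi\|^2,
\]
and you then need $\sup_{\ep>0} C_\ep < \infty$ to conclude. You correctly note that $C_\ep$ is governed by the $C^1$-norm of $h_\ep = (G+\ep)^{1/2}$. In the scalar case this is indeed fine: Glaeser's inequality $|dG|^2 \leqslant 2\|D^2G\|_\infty\, G$ for a non-negative $C^2$ function gives $|dh_\ep| \leqslant |dG|/(2\sqrt G)$ bounded independently of $\ep$. But in the matrix-valued case no such inequality is available in general: the square root of a smooth non-negative Hermitian-matrix-valued function need not be $C^1$, and uniform $C^1$ bounds on $(G+\ep)^{1/2}$ can fail precisely at eigenvalue crossings. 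Your proposed fix (local eigenframes plus a partition of unity) does not work, because near a crossing the eigenframe is itself discontinuous and the scalar argument applied to individual eigenvalues does not control the off-diagonal blocks. So as written, the argument does not close, and you yourself flag this as the place ``where the bulk of the delicate analytical work is expected to lie''.

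The paper avoids this difficulty entirely by a different mechanism. Instead of factoring $Q_k$ as a square, it uses the generalised ladder operators of \cite{oimpart1}: one chooses a second auxiliary bundle $A'=F$ with projector $(\Pi'_k)\in\lag(F)$ whose symbol is the projection onto $\D_0(TM)\otimes F$, and an approximate unitary $(U_k)\in\lag(A,F)$ with symbol a fibrewise unitary $u:F\to F'$. Since $\si_0(P)(x)=f(x)\pi(x)=u^*(x)f(x)u(x)$, the symbol calculus gives
\[
P_k = U_k^*\, f\, U_k + \bigo(k^{-1/2}),
\]
where $f$ now acts on $\Ci(M,L^k\otimes F)$ by pointwise multiplication. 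For a multiplication operator the inequality $\inf_M f_-\,\|\Psi'\|^2 \leqslant \langle f\Psi',\Psi'\rangle \leqslant \sup_M f_+\,\|\Psi'\|^2$ is exact and elementary; combined with $U_k^*U_k=\Pi_k+\bigo(k^{-1/2})$ this yields \eqref{eq:Garding} directly. No regularisation, no square root, no smoothness issue with $G$: positivity is obtained at the level of the multiplication operator rather than through an approximate factorisation inside $\lag(A)$.
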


The proof is based on the generalised ladder operators introduced in
\cite{oimpart1}: if $(A',F',\Pi_k', \Hilb_k')$ is a second set of data
satisfying the same assumption as $(A,F, \Pi_k, \Hilb_k)$ and $F$,
$F'$ are isomorphic vector bundles, then there exists isomorphisms $U_k : \Hilb_k \rightarrow \Hilb'_k$ when
$k$ is sufficiently large. Then defining  $\Hilb_k'$ as the kernel of a
well-chosen spin-c Dirac operator, $\dim \Hilb_k '$ is given by the
Atiyah-Singer Theorem, which will prove the first statement. For the second one, choosing $\Hilb'_k$ so that $F'=A'$,
$U_kP_kU_k^*$ is equal to a Toeplitz operator $\Pi_k ' f \Pi_k'$ up to a
$\bigo ( k^{-\frac{1}{2}})$. The inspiration here comes from the proof of the
sharp G{\aa}rding inequality for semiclassical pseudodifferential operator. 

\begin{proof}
Consider a second self-adjoint projector $\Pi' \in \lag ( A')$
with $\si_0 (\Pi')$ the orthogonal projector onto a vector bundle $F'$ of
$\mathcal{D}_{\leqslant p } (TM) \otimes A'$. Assume that $F$ and $F'$ are
isomorphic vector bundles. Then there exists $u \in \Ci ( M, \op{Hom} (F, F'))$
such that for any $x \in M$, $u(x)$ is a unitary isomorphim from $F_x$ to
$F'_x$. Extending $u (x)$ to a map $\D(T_xM ) \otimes A_x \rightarrow \D (T_xM)
\otimes A_x'$ which is zero on the orthogonal of $F_x$, we have $u^* (x) u(x)
= \si_0 ( \Pi ) (x)$ and $u(x) u^*(x) = \si_0 ( \Pi')(x)$. So if $(U_k) \in \lag (
A, A')$ has symbol $u$, then
\begin{gather} \label{eq:almost_unitary}
U^*_k U_k = \Pi_k + \bigo ( k^{-\frac{1}{2}}), \qquad U_k U_k^* = \Pi'_k +
\bigo ( k^{-\frac{1}{2}}). 
\end{gather}
Furthermore replacing
$U_k$ by $\Pi_k' U_k \Pi_k$ does not modify the symbol of $U_k$ so the same
property holds and moreover $\Pi_k' U_k \Pi_k
= U_k$. Consequently $U_k$ restricts to an isomorphism from $\Hilb_k$ to the
image $\Hilb'_k$ of $\Pi'_k$, when $k$ is sufficiently large.

Hence for large $k$, the dimension of $\Hilb_k$ only depends
on the isomorphism class of $F$. To compute it, we introduce a
spin-c Dirac operators $D_k$ acting on $L^k \otimes A'$ with $A' = F \otimes
\bigwedge^{0,\bullet} T^*M$ and define $\Hilb'_k$ as the kernel of $D_k$. Then
by a vanishing theorem \cite{BoUr96, MaMa02}, $\dim \Hilb_k ' $ is equal to the index of $D_k^+$ when
$k$ is sufficiently large. By Atiyah-Singer index theorem, $\dim \Hilb_k' =
\op{RR} ( L^k \otimes F)$. Furthermore, it follows from \cite{MaMa} that the
projector $(\Pi'_k)$ belongs to $\lag ( A')$ and $\si_0 (\Pi_k')$ is the
projector onto $\C \otimes F \otimes \C$. Alternatively the vanishing theorem
and the fact that $(\Pi_k') \in \lag (A')$ follows also from Corollary
\ref{cor:spectrum} and Theorem \ref{theo:main_result_first} applied to $D^-_k
D^+_k$ as in the proof of Theorem \ref{theo:semicl-dirac-oper}.

To prove the second part, we choose $A' = F'=F$, that is $(\Pi'_k) $ belongs to $\lag (F)$ and
its symbol is the projection onto $\D_0 (TM) \otimes F$. For
instance, we can choose $\Pi'_k = 1_{I}(k^{-1} \Delta_k)$ with $I =
\frac{n}{2} + [-\frac{1}{2} , \frac{1}{2} ]$ and $\Delta_k$ the
magnetic Laplacian acting of $\Ci ( M , L^k \otimes F)$ defined from any
connection of $F$ and the metric $\om ( \cdot, j \cdot)$ so that $\Si =
\frac{n}{2} + \N$.

Now let $P \in \lag (A)$ be selfadjoint and such that $\Pi_k P_k \Pi_k = P_k$.
Then the symbol $\si_0 (P) (x)$ is self-adjoint and has the form  $ \si_0 ( P) (x) =  f(x) \pi (x)$ with $f(x) \in \op{End}
F_x$. So $\si_0 ( P) (x) = u^*(x) f(x) u(x)$, so
\begin{gather} \label{eq:multiplicateur_f}
P_k = U_k ^* f U_k + \bigo (k^{-\frac{1}{2}} )
\end{gather}
where $f$ acts on $\Ci ( M , L^k \otimes F)$ by pointwise multiplication. 
For any $\Psi' \in \Ci ( M , L^k \otimes F)$,
$$ ( \inf_M f_{-} ) \; \| \Psi'\|^2 \leqslant \langle f \Psi' , \Psi' \rangle \leqslant (\sup_M f_+  ) \; \| \Psi'\|^2 $$
where $f_-(x)$ and $f_+(x)$ are the smallest and largest eigenvalues of $f(x)$
for any $x$. We conclude the proof by setting $\Psi ' = U_k \Psi$ and using
\eqref{eq:almost_unitary} and \eqref{eq:multiplicateur_f}.  
\end{proof}

\begin{cor} \label{cor:glob-spectr-estim}
 Let $(\Delta_k)$ be a family of formally self-adjoint differential operators
  of the form 
  \eqref{eq:ass_op}.  Let $a, b \in \R \setminus \Si$ with $a<b$. Then
  when $k$ is sufficiently large
\begin{gather} 
  \sharp \spec ( k^{-1} \Delta_k) \cap [a,b] = \begin{cases} \op{RR} (L^k
    \otimes F) \text{ if } [a,b] \cap \Si \neq \emptyset \\ 0 \qquad \text{
      otherwise,}
  \end{cases}
\end{gather}
with $F$ the bundle with fibers $F_x = 1_{[a,b]} ( \PP_x)$. Furthermore
\begin{gather} \label{eq:encad_spectre}
  \spec (  k^{-1} \Delta_k ) \cap [a,b] \subset [a,b] \cap \Si + \bigo
  ( k^{-\frac{1}{2}}).
\end{gather}
\end{cor}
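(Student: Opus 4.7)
Both assertions will follow from Theorems \ref{theo:main_result_first} and \ref{theo:glob-spectr-estim} together with the local finiteness of $\Si$. For the counting formula, note first that by Theorem \ref{theo:main_result_first}, the spectral projector $\Pi^{[a,b]}_k := 1_{[a,b]}(k^{-1}\Delta_k)$ belongs to $\lag(A)$ with symbol $y \mapsto 1_{[a,b]}(\PP_y)$, whose image is precisely the bundle $F$ in the statement. Since the left-hand side of \eqref{eq:counting} equals $\dim \op{Im}\Pi^{[a,b]}_k$, the first part of Theorem \ref{theo:glob-spectr-estim} identifies it with $\op{RR}(L^k \otimes F)$ for $k$ large. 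When $[a,b] \cap \Si = \emptyset$, the bundle $F$ is zero and so is the Riemann-Roch number, which gives the second case.

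For \eqref{eq:encad_spectre}, I write $\Si \cap [a,b] = I_1 \sqcup \cdots \sqcup I_m$ as a finite disjoint union of closed intervals, using that $\Si$ is a locally finite union of closed disjoint intervals. Fix a small $\eps > 0$ and choose $a_j, b_j \in \R \setminus \Si$ with $I_j \subset (a_j, b_j)$, the $[a_j, b_j]$ pairwise disjoint, and $[a_j,b_j] \cap \Si = I_j$. The complement $[a,b] \setminus \bigcup_j (a_j, b_j)$ is a finite union of compact intervals whose endpoints lie in $\R \setminus \Si$ and which are disjoint from $\Si$; applying the first part of the corollary to each (a uniform $k$-threshold suffices since there are only finitely many) yields $\spec(k^{-1}\Delta_k) \cap [a,b] \subset \bigcup_j (a_j, b_j)$ for $k$ large.

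To localize eigenvalues inside each $(a_j, b_j)$ within $\bigo(k^{-1/2})$ of $I_j$, I apply the second part of Theorem \ref{theo:glob-spectr-estim} with $\Pi^j_k := 1_{[a_j,b_j]}(k^{-1}\Delta_k)$ and $P_k := k^{-1}\Delta_k\,\Pi^j_k$. Since $\Pi^j_k$ commutes with $\Delta_k$, the operator $P_k$ is self-adjoint with $\Pi^j_k P_k \Pi^j_k = P_k$; by Corollary \ref{cor:dsed} it lies in $\lag(A)$ with symbol $\PP \circ \pi^{[a_j,b_j]}$, whose restriction at $y$ to $F^j_y := \op{Im}\,1_{I_j}(\PP_y)$ has spectrum $\Si_y \cap I_j$. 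Consequently $f^j_-(y) \geqslant \min I_j$ and $f^j_+(y) \leqslant \max I_j$ whenever $F^j_y \neq 0$, with equality attained at some $y \in M$ since $\min I_j$ and $\max I_j$ lie in $\Si$. For any eigenvalue $\la_k \in (a_j, b_j)$ of $k^{-1}\Delta_k$ the normalized eigenvector $\Psi_k$ lies in $\op{Im}\Pi^j_k$ and satisfies $\langle P_k \Psi_k, \Psi_k\rangle = \la_k$, so \eqref{eq:Garding} yields $\la_k \in I_j + Ck^{-1/2}[-1,1]$; taking the union over $j$ gives \eqref{eq:encad_spectre}. The step I expect to need most care is verifying that each $F^j$ is a smooth subbundle of $\D_{\leqslant p}(TM) \otimes A$ so that Theorem \ref{theo:glob-spectr-estim} applies, which follows from Lemma \ref{lem:smooth} and the continuity of the eigenvalues of $\PP_y$ in $y$.
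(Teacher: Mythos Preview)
Your proof is correct and follows essentially the same strategy as the paper: invoke Theorem~\ref{theo:main_result_first} and the first part of Theorem~\ref{theo:glob-spectr-estim} for the counting formula, then decompose $[a,b]\cap\Si$ into its connected components and apply the G{\aa}rding estimate~\eqref{eq:Garding} to $P_k = k^{-1}\Delta_k\,\Pi^j_k$ on each. The only minor deviation is your treatment of the case $[a,b]\cap\Si=\emptyset$: you deduce it from $F=0\Rightarrow\op{RR}(L^k\otimes F)=0$, whereas the paper invokes Corollary~\ref{cor:spectrum} directly (note that your route implicitly uses that a projector in $\lag(A)$ with vanishing symbol has norm $\bigo(k^{-1/2})$ and is hence zero for large $k$, which is immediate but worth making explicit if you want to avoid applying Theorem~\ref{theo:glob-spectr-estim} to a rank-zero bundle).
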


\begin{proof} When $[a,b] \cap \Si$ is empty, we already know by Corollary
  \ref{cor:spectrum} that $\spec ( k^{-1} \Delta_k) \cap [a,b] $ is empty when
  $k$ is sufficiently large. When $[a,b] \cap \Si \neq \emptyset$, by Theorem
  \ref{theo:main_result_first}, the spectral
  projector $\Pi_k = 1_{[a,b]} ( k^{-1} \Delta_k)$ belongs to $\lag (A)$ with
    symbol $\pi = 1_{[a,b]} (\PP)$. So the dimension of $\op{Im} \Pi_k$ is
    given in the first assertion of Theorem \ref{theo:glob-spectr-estim}.

    Moreover,
    by Corollary \ref{cor:dsed}, $(k^{-1} \Delta_k )\Pi_k$ belongs to $\lag
    (A)$ and its symbol is $\PP 1_{[a,b]} (\PP)$. By the second assertion of Theorem
    \ref{theo:glob-spectr-estim}, 
$$ \spec (  k^{-1} \Delta_k ) \cap [a,b] = \spec (  k^{-1} \Delta_k \Pi_k )
\subset [\inf f_-, \sup f_+ ]  + \bigo
  ( k^{-\frac{1}{2}})$$ 
    where $f$ is the restriction of $\PP$
    to $F = \op{Im} \pi$.

    This proves the inclusion \eqref{eq:encad_spectre}
    when $[a,b] \cap \Si$ is connected. Indeed, $[a,b] \cap \Si_y = [f_-(y),
    f_+ (y)] \cap \Si_y $. So on one hand, $M$ being compact, $\inf f_- = f (
    y_-) $ and $\sup f_+ = f ( y_+)$ belongs to
    $[a,b] \cap \Si$. On the other hand $ [ a, b ] \cap \Si \subset [ \inf
    f _- , \sup f_+ ]$. Consequently
    $[a,b]\cap \Si = [\inf f_-, \sup f_+]$. 

    To treat the general case, we use that $[a,b] \cap
    \Si$ is a finite union of mutually disjoint compact intervals $I_1$,
    \ldots, $I_\ell$. So there exists $a_1 = a < a_2 < \ldots < a_{\ell+ 1} = b$
    in $\R \setminus \Si$ such that $I_i = [a_i, a_{i+1}] \cap \Si$ and by
    what we have proved, $\spec ( k^{-1} \Delta_k ) \cap [a_i, a_{i+1}] \subset
    I_i + \bigo ( k^{-\frac{1}{2}})$.
\end{proof}

\begin{rem} \label{rem:improvment}
  Decompose $\D (TM)$ into even and odd subspace
  $$ 
  \D  ^+ (TM) =
  \bigoplus_{p\in\N}  \D_{2p} (TM), \qquad \D ^{-} (TM) = \bigoplus_{p \in \N}
  \D_{2p+1} (TM) .$$ 
  Let us assume that $(\Pi_k)$ is even and that $F$ has a definite parity in the sense that  $F$ is a subbundle of
  $D^{\ep} (TM) \otimes A$ for $\ep = +$ or $-$. Then \eqref{eq:Garding} and
  \eqref{eq:encad_spectre} hold with $k^{-1}$ instead of $k^{-\frac{1}{2}}$.

  Indeed, by \cite[Theorem 2.5]{oimpart1}, the $\si_p$-symbol
  of $P_k \in \lag^{+}_p(A)$ has the same parity of $p$, meaning that $\si
  _p(P_k)$ sends $\D^{\ep} (TM) \otimes A$ into $\D^{ \ep'} (TM) \otimes A$
  with $\ep' = (-1)^p \ep$. 
 So if an operator $(P_k ) \in
  \lag_{1} ^+ (A)$ is such that $\Pi_k P_k \Pi_k = P_k$, then its symbol $\si_{1} ( P_k)$ is odd and has the form $ g \pi $ for some $g
  \in \op{End} F$. So $g$ is odd, but $F$ has a definite parity, so $g=0$.
  Consequently $(P_k ) \in \lag_{2}^+ (A)$. Moreover, by \cite[Theorem
  3.4]{oimpart1}, we can construct $(U_k) \in \lag (A, F)$  such that $U_k
  U_k^* = \op{id}$ when $k$ is sufficiently large and $(U_k)$ has the same
  parity as $F$. So if $(P_k) \in \lag^
  {+} (A)$, then $ (U_k P_k
  U_k^*) \in \lag ^{+} (F)$. Then in the proof of Theorem \ref{theo:glob-spectr-estim}, we can replace the
  $\bigo ( k^{-\frac{1}{2}})$ in \eqref{eq:almost_unitary} and
  \eqref{eq:multiplicateur_f} by a $\bigo ( k^{-1})$.
\qed  \end{rem}

\subsection{Local spectral estimates} \label{sec:local-spectr-estim}

\begin{theo} \label{theo:local-spectr-estim}
  Let $(P_k) \in \lag (A)$ be such that $\Pi_k P_k \Pi_k = P_k$ and
  $P_k^* = P_k$. Let $f \in \Ci ( M , \op{End} F)$ be the restriction of
  $\si_0(P_k)$ to $F$. 
\begin{enumerate} 
\item For any compact subsets $C$ of $M$ and $I$ of $\R$ such that $I
  \cap \spec (f(x)) = \emptyset$ for any $x \in C$, we have for any $N$
$$ (\Pi_k 1_{I} (P_k) \Pi_k)  (x,x) = \bigo (k^{-N}), \qquad \forall x \in
C $$ 
with a $\bigo$ uniform with respect to $x$. 
\item For any $g \in \Ci ( \R, \C)$, $(\Pi_k g(P_k) \Pi_k)$ belongs to
  $\mathcal{L}(A)$ and its $\si_0$-symbol is $(g \circ f) \pi$. Moreover, if
  $(\Pi_k)$ and $(P_k)$ are in $\lag^{+} (A)$, then the same holds for $(\Pi_k g(P_k) \Pi_k)$.
\end{enumerate}
\end{theo}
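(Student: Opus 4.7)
The plan is to prove part (2) by Helffer-Sjöstrand functional calculus combined with an approximate-resolvent construction in $\lag(A)$, and then deduce part (1) via a flat cutoff and a positivity argument on diagonal Schwartz kernels. For part (2), the G\aa{}rding inequality of Theorem~\ref{theo:glob-spectr-estim} shows that $P_k|_{\Hilb_k}$ has spectrum contained in a fixed compact interval $J$ for large $k$. Since $P_k$ vanishes on $\Hilb_k^{\perp}$, the operator $\Pi_k g(P_k) \Pi_k$ depends modulo $\bigo(k^{-\infty})$ only on $g|_{J \cup \{0\}}$, so we may replace $g$ by a compactly supported smooth function. With $\tilde g$ an almost analytic extension satisfying $|\bar{\partial} \tilde g(z)| = \bigo(|\op{Im} z|^N)$ for every $N$, the Helffer-Sjöstrand formula gives
\begin{gather*}
\Pi_k g(P_k) \Pi_k = -\frac{1}{\pi} \int_{\C} \bar{\partial} \tilde g(z)\, \Pi_k(z - P_k)^{-1} \Pi_k\, dL(z).
\end{gather*}

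The central technical step is the construction, for each $z \in \C \setminus \R$, of an approximate resolvent $\tilde R_k(z) \in \lag(A)$ with leading symbol the endomorphism $(z - f(x))^{-1}$ of $F_x$ extended by zero on $F_x^{\perp}$, satisfying $\Pi_k \tilde R_k(z) \Pi_k = \tilde R_k(z)$ and $(z - P_k)\tilde R_k(z) = \Pi_k$ modulo $\lag^{\infty}_{\infty}(A)$. This proceeds by an iteration entirely parallel to Lemma~\ref{lem:formal-projector}: at each order the symbolic obstruction reduces to an equation of the form $(z - f) h = r$ on $F$, invertible because $z \notin \spec(f(x))$. The norm bound $\|\Pi_k(z - P_k)^{-1}\Pi_k\| \leq |\op{Im} z|^{-1}$ upgrades the formal identity to $\Pi_k (z - P_k)^{-1}\Pi_k = \tilde R_k(z) + \bigo_N(|\op{Im} z|^{-m_N} k^{-N})$, with polynomial blow-up in $|\op{Im} z|^{-1}$ absorbed by the decay of $\bar{\partial} \tilde g$. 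Substituting back, $\Pi_k g(P_k) \Pi_k$ coincides modulo $\bigo(k^{-\infty})$ with the integral of $-\frac{1}{\pi} \bar{\partial} \tilde g(z) \tilde R_k(z)$, which belongs to $\lag(A)$ by construction and has symbol $-\frac{1}{\pi} \int \bar{\partial} \tilde g(z)(z - f)^{-1}\pi\, dL(z) = (g \circ f)\pi$ by the usual Cauchy-Pompeiu identity applied fiberwise on $F$. The parity statement follows because every step of the construction stays within $\lag^{+}(A)$.

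For part (1), fix $x_0 \in C$ and use continuity of the eigenvalues of $f$ to select a neighborhood $V$ of $x_0$ and $\epsilon > 0$ such that $\spec(f(y)) \cap (I + [-\epsilon, \epsilon]) = \emptyset$ for every $y \in V$. Choose $g_\epsilon \in \Ci_c(\R)$ with $g_\epsilon \geq 1_I$, $g_\epsilon \equiv 1$ on $I$ and $\op{supp} g_\epsilon \subset I + (-\epsilon, \epsilon)$, so that $g_\epsilon$ is flat at every point of $\spec(f(y))$ for $y \in V$. A refinement of the construction of part~(2) that tracks how the full symbol of $\Pi_k g_\epsilon(P_k) \Pi_k$ at $y$ depends on the jet of $g_\epsilon$ along $\spec(f(y))$ shows that the entire asymptotic expansion vanishes on $V$, hence $(\Pi_k g_\epsilon(P_k) \Pi_k)(y, y) = \bigo(k^{-\infty})$ uniformly on a smaller neighborhood of $x_0$. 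Since $0 \leq 1_I(P_k) \leq g_\epsilon(P_k)$ as operators, we have $0 \leq \Pi_k 1_I(P_k) \Pi_k \leq \Pi_k g_\epsilon(P_k) \Pi_k$, and this inequality between positive self-adjoint operators with smooth Schwartz kernels passes to Hermitian endomorphisms on the diagonal by a standard reproducing-kernel argument; a finite cover of $C$ concludes. The main obstacle is precisely this refinement, namely showing that flatness of $g$ at $\spec(f(y))$ forces the full asymptotic of $\Pi_k g(P_k) \Pi_k$ at $y$ to vanish. A cleaner alternative that avoids both the refinement and the appeal to positivity is to build directly in $\lag(A)$ a local formal projector onto $\op{Im} 1_I(P_k|_{\Hilb_k})$ using a Riesz-projection along a contour $\gamma_{x_0}$ encircling $I$ and avoiding $\spec(f(y))$ for $y$ near $x_0$, and then identify it with $\Pi_k 1_I(P_k) \Pi_k$ up to $\bigo(k^{-\infty})$ by mimicking the uniqueness arguments of Section~\ref{sec:end-proof-theorem}.
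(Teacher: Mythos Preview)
Your treatment of part~(2) via Helffer--Sj\"ostrand and an iterated approximate resolvent in $\lag(A)$ matches the paper's argument. The paper is more explicit about the $z$-dependence: it introduces auxiliary classes $\mathcal{F}\Ci(U)$ and $\mathcal{F}\lag(A)$ of families rational in $z$ with real poles, so that the construction is visibly uniform as $z \to \R$ and the integration against $\bar\partial\tilde g$ can be performed termwise; but the skeleton is the same.

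For part~(1) your route diverges from the paper's and carries exactly the gap you flag. The paper does not deduce (1) from (2). It works directly with a \emph{local} parametrix: for $\varphi \in \Ci_0(U)$ supported where $I \cap \spec f(x) = \emptyset$, one builds $(Q_k(\lambda)) \in \lag(A)$ with symbol $\varphi(f-\lambda)^{-1}\pi$ and iterates (tracking supports) so that
\[
\Pi_k Q_k(\lambda)\Pi_k\,(P_k - \lambda\Pi_k) = \Pi_k\varphi\Pi_k - R_k(\lambda),\qquad \|R_k(\lambda)\| = \bigo(k^{-\infty})
\]
uniformly for $\lambda \in I$. Applied to a normalized eigenvector $\Psi \in \Hilb_k$ with eigenvalue $\lambda \in I$ this gives $\langle \varphi\Psi,\Psi\rangle = \bigo(k^{-\infty})$; the identity $\Psi = \Pi_k\Psi$ together with the kernel bounds on $\Pi_k$ upgrades this to a pointwise $\bigo(k^{-\infty})$ near any point of $U$, and one sums over an eigenbasis of size $\bigo(k^n)$. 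No flatness argument and no operator-inequality-on-the-diagonal step is needed.

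Your approach can be completed, but it requires a sharper statement than the paper proves: you need each symbol coefficient $b_\ell(z,y,\xi)$ of the approximate resolvent to be rational in $z$ with poles \emph{only at} $\spec f(y)$, not merely on $\R$. This does follow by induction, since $y$-derivatives of $(z-f(y))^{-1}$ keep their poles at $\spec f(y)$ and the product formula in $\lag(A)$ only introduces such derivatives; Cauchy--Pompeiu then expresses each diagonal coefficient as a finite combination of $g^{(j)}$ evaluated at eigenvalues of $f(y)$, which vanish by flatness. So the refinement is real work but not a dead end. The paper's eigenvector argument simply sidesteps all of it.
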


\begin{proof}
  Let $U$ be the open set $\{ x \in M; \; \spec (f(x) ) \cap I = \emptyset \}$.
  Let $\varphi \in \Ci_0(U)$ and $\la \in I$. Observe that $ \varphi  (f - \la )^{-1}
   \in \Ci ( M , \op{End} F)$. So if $(Q_k) \in \lag(A)$ has symbol $ \varphi (f -
  \la )^{-1}  \pi $, we have
\begin{gather} \label{eq:pi_k-q_k-pi_k}
  \Pi_k Q_k
  \Pi_k  (P_k-\la \Pi_k)  = \Pi_k \varphi \Pi_k - R_k \end{gather} 
  with $(R_k) \in \lag_1 (A)$. Let us improve this to obtain $(R_k) \in \lag_{\infty}
  (A)$.

  We need the following notion of support: for any $S \in \lag
  (A)$, $\op{supp} S$ is the closed set of $M$ such that $x \notin
  \op{supp} S$ if and only $S_k(y,z) = \bigo ( k^{-\infty})$ on a
  neighborhood of $(x,x)$. Using that the Schwartz kernel of $S \in \lag (A)$
  is in $\bigo ( k^{-\infty})$ on compact subsets of $M^2 \setminus \op{diag}
  M$ and in $\bigo ( k^n)$ on $M^2$, we prove that for any $S$, $S' \in \lag
  (A)$, $\op{supp} (SS') \subset (\op{supp} S) \cap ( \op{supp} S')$.

Assume now that $(Q_k) \in \lag (A)$ has the symbol $ \varphi  (f - \la )^{-1}
\pi$ as above and is supported in $U$. Then $(R_k) \in \lag_p (A)$ with $p
\geqslant 1$, $\Pi_k R_k \Pi_k = R_k$ so that the symbol $r = \si_p(R_k)$ satisfies $\pi
r \pi = r$. Furthermore, $(R_k)$ is supported in $U$, so the same holds for $r$, so that $r ( f - \la)^{-1} \in \Ci ( M , \op{End} F)$.  Let $(Q_k') \in \lag_p
(A)$ be supported in $U$ and have symbol $\si_p(Q_k') = r ( f - \la)^{-1} \pi$.
Then if we replace $Q_k$ in \eqref{eq:pi_k-q_k-pi_k} by $Q_k + Q_k'$, we have now $(R_k) \in \lag_{p+1}
(A)$. We deduce the existence of $(Q_k)$ such that \eqref{eq:pi_k-q_k-pi_k} holds
with $(R_k) \in \lag_{\infty} (A)$, so the operator norm of $R_k$ is in $\bigo
( k^{-\infty})$. 

We claim that this construction can be realized so that we obtain a $\bigo (
k^{-\infty})$ uniform with respect to $\la \in I$. To do this, we consider
families
\begin{gather} \label{eq:s_kla-in-lag}
  (S_k(\la)) \in \lag (A), \qquad \la \in I, 
\end{gather}
such that in the kernel
expansion \eqref{eq:expansion}, the coefficients $a_{\ell}$ depend
continuously on $\la$ and the remainders $r_{N,k}$ are in $\bigo (k^{n -
  \frac{N+1}{2}})$ on compact subsets of $U^2$ with a $\bigo$ independent of
$\la$. Then if $(S_k '( \la))$ is another family depending continuously on
$\la$ in the same sense, the same holds for the product $(S_k' ( \la)
S_k(\la))$. Furthermore, if $(S_k ( \la)) \in \lag_p (A)$ for any $\la \in I$, the
operator norm of $S_k ( \la)$ is in $\bigo ( k^{-\frac{p}{2}})$ with a $\bigo$
independent of $\la$. The proof of these claims is the same as the proof of
the same facts without $\la$.  Later in \eqref{eq:pi_k-s_k-z}, we will use these results again with the
parameter $\la$ describing a compact subset of $\C$. 

Now we deduce from \eqref{eq:pi_k-q_k-pi_k} with $\|R_k \| = \bigo (
k^{-\infty})$ that for any $k$, any normalised $\Psi \in \Hilb_k$ such that $P_k \Psi =
\la \Psi$ with $\la \in I$ satisfies $\langle \varphi \Psi, \Psi \rangle =
\bigo ( k^{-\infty}) $ with a $\bigo$ independent of $\la$ and
$\Psi$. For any $x \in U$, we can choose $\varphi$ equal to $1$ on a
neighborhood of $x$ and we deduce the existence of a compact neighborhood $V$ of $x$, such that
any $\Psi$ as above satisfies
$$\int_V | \Psi (x)|^2 d \mu (x) = \bigo ( k^{-\infty})$$
Writing $\Psi = \Pi_k \Psi$ and using that the Schwartz kernel of $\Pi_k$ is
in $\bigo ( k^{n})$ on $M^2$ and in $\bigo ( k^{-\infty})$ on compacts subset
of $M^2$ not intersecting the diagonal, we get that on a neighborhood of $x$
the pointwise norm of $\Psi$ is in $\bigo ( k^{-\infty})$. Since 
$ (\Pi_k 1_I (P_k) \Pi_k) (x,x)$ is the sum of the $ | \Psi_\ell (x) |^2$ where
$(\Psi_\ell)$ is an orthonormal basis of $\Hilb_k \cap \op{Im}  1_I (P_k)$ of
eigenvectors of $P_k$, and $\op{dim} \Hilb_k = \bigo ( k^{n})$, we deduce that 
$$ ( \Pi_k 1_I(P_k) \Pi_k) (x,x) = \bigo ( k^{-\infty}) , \qquad \forall x \in U
$$
with a $\bigo $ uniform on compact subsets of $U$. This ends the proof of the
first assertion.

For the second assertion, since the operator norm of $P_k$ is bounded independently of
$k$, we can assume that $g \in \Ci_0 ( \R, \C)$. We will apply the
Helffer-Sj\"ostrand formula, that we already used in a similar context for the
functional calculus of Toeplitz operators \cite[Proposition 12]{oimbt}. So for $\tilde{P}_k$ the
restriction of $P_k$ to $\Hilb_k$, we have
\begin{gather} \label{eq:Helffer_Sjostrand}
g ( \tilde P_k) = \frac{1}{2  \pi} \int_\C  (\partial_{\con{z}} \tilde{g} )(z)
\; (z
- \tilde P_k)^{-1} \;| dz d \con{z}|
\end{gather}
where $\tilde{g} \in \Ci_0 ( \C, \C)$ is an extension of
$g$ such that $\partial_{\con{z}}\tilde{g}$ vanishes to infinite
order along the real axis \cite[Theorem 14.8]{Zw}.

In the same way we proved \eqref{eq:pi_k-q_k-pi_k}, we can construct for any
$z \in \C \setminus \R$, $(Q_k (z)) \in \lag(A)$ such that $\Pi_k Q_k (z)\Pi_k = Q_k
(z) $ and 
\begin{gather} \label{eq:q_k--z}
Q_k ( z)  (z - P_k) = \Pi_k  - R_k (z) 
\end{gather}
with $(R_k (z) ) \in \lag _{\infty}(A)$. At the first step we set $Q_k (z)
= \Pi_k \tilde{Q}_k \Pi_k$ with $\tilde{Q}_k (z) $ in $\lag (A)$ having symbol
$(z - f )^{-1} \pi $. We obtain \eqref{eq:q_k--z} with $(R_k (z) ) \in
\lag_1(A)$. 
Then if $(R_k (z)) \in \lag_p ( A)$ and has symbol $ \si_p( R_k (z)) = r(z)$, we add to $Q_k$ the
operator $\Pi_k Q_k '(z) \Pi_k $ where $(Q_k'(z))$ is an operator of $\lag_p
(A)$ with symbol $ \si ( Q_k'(z) ) = r (z) (z- f)^{-1} \pi$. 

To apply this in \eqref{eq:Helffer_Sjostrand}, we need to control carefully
the dependence with respect to $z$. For $U$ an open set of $M$, introduce the
space $\mathcal{F} \Ci (U)$ consisting of family $(f(z, \cdot), \; z \in \C
\setminus \R)$ of $\Ci (U)$ having the form
$$ g(z,x) = \frac{ \sum_m a_{m} (x) z^m} { \sum_m b_m(x) z^m }$$
where the sums are finite, the coefficients $a_m$ and $b_m$ belongs to $\Ci
(U)$, and for any $x$ the poles of $g(\cdot, x)$ lie on the real axis.  Since
$\mathcal{F} \Ci (U)$ is a $\Ci (U) $-module, we can define $\mathcal{F} \Ci (
U, B)$ for any auxiliary bundle $B$ as the space of $z$-dependent section of
$B $ on $U$ with local representatives in $\mathcal{F}\Ci (U)$ for any
$z$-independent frame of $B$ on $U$.

Now, having in mind the construction of
$Q_k (z)$ in \eqref{eq:q_k--z}, observe that $(z - f)^{-1}$ belongs to
$\mathcal{F} \Ci ( M, \op{End} F)$. Moreover, $\mathcal{F} \Ci ( U)$ being closed
under product, for any $r(z) \in \mathcal{F} \Ci ( M, \op{End} F)$, $r(z)
 (z- f
)^{-1} \in \mathcal{F} \Ci ( M, \op{End} F)$.

Now introduce the space $\mathcal{F} \lag (A)$ consisting of families $(P_k (z)
, \; z \in \C \setminus \R)$ of $ \lag(A)$ such that in the asymptotic
expansion  \eqref{eq:expansion} satisfied by the Schwartz kernel of $P_k(z)$ the coefficients
 have the form 
\begin{gather} \label{eq:a_ell-z-x}
a_{\ell} (z,x,\xi) = \sum a_{\ell, \al } (z,x)
\xi^\al
\end{gather}
with $a_{\ell, \al} \in \mathcal{F} \Ci (U, \op{End} \C^r)$, and each remainder
$r_{N,k}$ is in $\bigo ( k^{n - \frac{N+1}{2}}) $ uniformly on $K \cap ( (\C
\setminus \R) \times U^2)$ where $K$ is any compact subset of
$\C \times U^2$. We claim that we can choose $Q_k (z) \in \mathcal{F} \lag
(A)$ in \eqref{eq:q_k--z}. To see this, it suffices to prove that 
\begin{gather} \label{eq:lem} 
S (z) \in \mathcal{F} \lag  (A) \Rightarrow \Pi_k S_k (z) \Pi_k (z - P_k )
\in \mathcal{F} \lag (A)
\end{gather} 
and then to use what we said before on $r(z)
\circ (z - f)^{-1}$.  To prove \eqref{eq:lem},  it suffices to show
that for any $ S(z) \in \mathcal{F} \lag (A)$ and $T \in \lag (A)$ independent
of $z$, $TS(z)$ and $S(z) T$ belongs to $\mathcal{F} \lag (A)$. To prove this,
we can assume
that the Schwartz kernel of $T(z)$ is contained in a compact subset of $U^2$
independent of $k$ and $z$,
where we have the expansion \eqref{eq:expansion}, and we can treat each term
of the expansion independently of the others. Suppose we only have $a_{\ell}
(z,x, \xi)$. Then by \eqref{eq:a_ell-z-x}, $ S(z) = \sum S_\al
a_{\ell, \al} (z, \cdot)  $ where the sum is finite, $S_{\al} \in \lag_{\ell} (A)$ and
does not depend on $z$.  Since $T S(z) = \sum (TS_{\al}) a_{\ell, \al}(z ,
\cdot )$ and
$TS_{\al} \in \lag_{\ell} (A)$ for any $\al$, $TS(z)$ belongs
to $\mathcal{F} \lag_{\ell}  (A)$. The product $S(z)T $ is more delicate to handle. By
the same proof as \cite[Lemma 5.11]{oim}, for any compact set $K$ of $U$, there exists a family $(T_{\be}, \be
\in \N^{2n})$ such that $ T_{\be} \in \lag_{|\be|}  (A)$, and for any $f \in \Ci_K
(U)$, $f T = \sum_{|\be| \leqslant N} T_{\be} (\partial^{\be} f) $ modulo
$\lag_{N+1} (A)$. Consequently
$$ S(z) T = \sum_{ \al} S_{\al} (a_{\ell,\al}(z, \cdot ) T) =  \sum_{ \al, |\be|
  \leqslant N} S_{\al} T_{\be} (\partial^\be a_{\ell, \al}(z, \cdot) ) $$
 modulo
$\lag_{N+1} (A)$. To conclude we use that $S_{\al} T_{\be} \in \lag _{\ell +
  |\be|} (A)$ and $\partial^\be a_{\ell, \al }  \in \mathcal{F}  \Ci ( U,
\op{End} \C^r)$.

Now the function $\xi (z) = (\op{Im} z)^{-1} \partial_{\con z}
\tilde{g} (z)$ vanishes to infinite order along the real axis and its
support is contained in the compact set $K = \op{supp} \tilde g$.  For any $ f \in \mathcal{F} \Ci (U)$, the
product $\xi (z) f(z, \cdot )$ extends smoothly to $\C$. We deduce that there exists a family $(S_k (z) )$ of $\lag (A)$ depending continuously of $z
\in K$
in the same sense as \eqref{eq:s_kla-in-lag}, and such that 
\begin{gather} \label{eq:pi_k-s_k-z}
\Pi_k S_k (z) \Pi_k = S_k (z), \qquad S_k (z) (z - P_k ) = \xi (z) \Pi_k +
\bigo ( k^{-\infty})
\end{gather}
with a $\bigo$ uniform with respect to $z$. 
Since $\| (z - \tilde{P}_k)^{-1} \| = \bigo (|\op{Im} z |^{-1})$, multiplying
the last equality by $( \op{Im} z ) (z- \tilde P_k)^{-1}$, we obtain
\begin{gather} \label{eq:part-z-tild}
\partial_{\con z}
\tilde{g} (z) (z- \tilde{P}_k)^{-1} \Pi_k = (\op{Im} z)
 S_k (z) + R_k (z) 
\end{gather}
with $R_k (z) = \bigo (k^{-\infty})$.
Since $\Pi_k R_k (z) \Pi_k = R_k (z)$ and the schwartz kernel of $\Pi_k $ is in
$\bigo ( k^n)$, this implies that the Schwartz kernel of $R_k(z) $ is in $\bigo (
k^{-\infty})$  uniformly with respect to $z$. 
Inserting \eqref{eq:part-z-tild} in \eqref{eq:Helffer_Sjostrand}, it comes that $(g ( \tilde{P}_k)\Pi_k )$ belongs to $\mathcal{L}(A)$. To see this,
we simply have to integrate with respect to $z$ the coefficients $a_{\ell}
(z,x,\xi)$ in the expansion \eqref{eq:expansion} of the Schwartz kernel of $(
\op{Im} z ) S_k (z)$. Since $\si_0 ( (\op{Im} z) S_k (z) ) = \partial_{\con z}
\tilde{g} (z) ( z- f)^{-1}
\pi$, we deduce also that
$$ \si_0 ( g ( \tilde{P}_k)\Pi_k ) = \frac{1}{2  \pi} \int_\C \partial_{\con z}
\tilde{g} (z) (z-f)^{-1}
\pi \; |dz d \con{z}| =  g ( f) \pi $$
  which concludes the proof.    
\end{proof}

\begin{cor} \label{cor:local-spectr-estim}
 Let $(\Delta_k)$ be a family of formally self-adjoint differential operators
  of the form 
  \eqref{eq:ass_op}. Let $\bs \in \R \setminus \Si$. Then for any $g \in \Ci (\R , \C)$ supported in $]-\infty, \bs]$, $(g ( k^{-1}
  \Delta_k ))$ belongs to $\lag ^+ (A)$ and has symbol $g ( \PP)$. 
\end{cor}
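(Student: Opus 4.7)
The plan is to reduce the functional calculus of $k^{-1}\Delta_k$ on $]-\infty,\bs]$ to the functional calculus of a Toeplitz operator of the form $k^{-1}\Delta_k \Pi_k$, and then apply Theorem \ref{theo:local-spectr-estim}.

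First I would choose a compact interval $I = [a,b]$ with $a,b\in\R\setminus\Si$ such that $a<\inf V_1$ and $\bs<b$ while $]\bs,b]\cap\Si=\emptyset$. Such a choice is possible because $\Si$ is a locally finite union of closed intervals and $\bs\notin\Si$; note also that $a<\inf V_1\leqslant\inf_y\la_1(y)$ so $a<\inf\Si$ and in particular $a\notin\Si$. By Corollary \ref{cor:spectrum} (and the uniform lower bound $k^{-1}\Delta_k\geqslant\inf V_1>a$), for $k$ sufficiently large every eigenvalue $\la$ of $k^{-1}\Delta_k$ with $\la\leqslant\bs$ lies in $[a,b]$. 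Since $\op{supp} g\subset]-\infty,\bs]$, this gives the crucial identity
\begin{gather*}
g(k^{-1}\Delta_k) = g(k^{-1}\Delta_k)\,\Pi_k^I = \Pi_k^I\, g(k^{-1}\Delta_k)\,\Pi_k^I,
\end{gather*}
where $\Pi_k^I := 1_{[a,b]}(k^{-1}\Delta_k)$.

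Next, by Theorem \ref{theo:main_result_first}, $(\Pi_k^I)$ belongs to $\lag^+(A)$ with symbol $\pi^I=1_{[a,b]}(\PP)$, and by Corollary \ref{cor:dsed}, the operator $P_k := k^{-1}\Delta_k\Pi_k^I$ belongs to $\lag^+(A)$ with symbol $\PP\circ\pi^I$. Since $\Pi_k^I$ is self-adjoint and commutes with $\Delta_k$, $P_k$ is self-adjoint and satisfies $\Pi_k^I P_k \Pi_k^I = P_k$, so it fits the hypotheses of Theorem \ref{theo:local-spectr-estim}. The restriction $f$ of $\si_0(P_k)(y)$ to the subbundle $F=\op{Im}\pi^I$ is simply the restriction of $\PP_y$ to $F_y=\op{Im}\,1_{[a,b]}(\PP_y)$. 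Part 2 of Theorem \ref{theo:local-spectr-estim} then gives
\begin{gather*}
\bigl(\Pi_k^I\, g(P_k)\,\Pi_k^I\bigr)\in\lag^+(A),\qquad \si_0\bigl(\Pi_k^I g(P_k)\Pi_k^I\bigr)(y)=g(f(y))\,\pi^I(y).
\end{gather*}

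To finish, I would identify $\Pi_k^I g(P_k)\Pi_k^I$ with $g(k^{-1}\Delta_k)$, and $g(f)\pi^I$ with $g(\PP)$. For the first identification, observe that on $\op{Im}\Pi_k^I$ the operator $P_k$ agrees with $k^{-1}\Delta_k$ while on the orthogonal $P_k=0$; thus $g(P_k)=g(k^{-1}\Delta_k)\Pi_k^I+g(0)(1-\Pi_k^I)$ and therefore $\Pi_k^I g(P_k)\Pi_k^I = g(k^{-1}\Delta_k)\Pi_k^I$, which we have already seen equals $g(k^{-1}\Delta_k)$. For the second identification, observe that $\PP_y$ preserves the decomposition $F_y\oplus F_y^\perp$ inside $\D(T_yM)\otimes A_y$, with spectrum on $F_y^\perp$ contained in $\Si_y\setminus[a,b]\subset\Si_y\cap]\bs,\infty[$, where $g$ vanishes by our choice of $b$. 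Hence $g(\PP_y)$ vanishes on $F_y^\perp$ and coincides with $g(f(y))$ on $F_y$, i.e.\ $g(\PP_y)=g(f(y))\pi^I(y)$. The main subtlety of the argument is precisely this last bookkeeping: we must use the gap $]\bs,b]\cap\Si=\emptyset$ both to restrict the spectral projector of $k^{-1}\Delta_k$ below $\bs$ to $\op{Im}\Pi_k^I$ and to ensure that the algebraic relation $g(\PP_y)=g(f(y))\pi^I(y)$ holds pointwise.
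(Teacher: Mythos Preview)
Your proof is correct and follows essentially the same route as the paper: reduce to the Toeplitz setting via $g(k^{-1}\Delta_k)=\Pi_k\,g(k^{-1}\Delta_k\Pi_k)\,\Pi_k$ with $\Pi_k$ the spectral projector onto $]-\infty,\bs]$, then invoke Theorem~\ref{theo:main_result_first}, Corollary~\ref{cor:dsed}, and part~2 of Theorem~\ref{theo:local-spectr-estim}. Your version is in fact more careful than the paper's: you explicitly pick a compact interval $[a,b]$ so that assumption~\eqref{eq:ass_I} is literally satisfied, and you spell out the two identifications $\Pi_k^I g(P_k)\Pi_k^I=g(k^{-1}\Delta_k)$ and $g(f)\pi^I=g(\PP)$ that the paper leaves implicit.

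One small remark: the parenthetical ``uniform lower bound $k^{-1}\Delta_k\geqslant\inf V_1$'' is stated in the introduction only for the magnetic Laplacian \eqref{eq:delta_k}, not for the general class \eqref{eq:ass_op}. You do not actually need it: Corollary~\ref{cor:spectrum} alone (applied with the given $\bs$) already forces $\spec(k^{-1}\Delta_k)\cap(-\infty,\bs]$ into a $Ck^{-1/4}$-neighborhood of $\Si$, hence above $a$ for $k$ large once $a<\inf\Si$. So you may simply drop that parenthetical.
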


\begin{proof}
Since $g$ is supported in $]-\infty, \bs]$, $g ( k^{-1} \Delta_k ) = \Pi_k g
( k^{-1} \Delta_k \Pi_k) \Pi_k$ where $\Pi_k = 1_{]-\infty, \bs]} ( k^{-1}
\Delta_k)$. By Theorem \ref{theo:main_result_first} and Corollary
\ref{cor:dsed}, $( \Pi_k)$ and $k^{-1} \Delta_k \Pi_k$ belongs to $\lag^+ (A)$
with symbols $\pi = 1_{]-\infty, \bs ]} (\PP)$ and $ f = g ( \PP)$.
So the results follows from the second assertion of Theorem
\ref{theo:local-spectr-estim}.
\end{proof}
This proves the second part of Theorem \ref{theo:intro_proj}. We end this
section with the proof of the local Weyl laws, Theorem
\ref{theo:intro_weyl_local}. The proof works for any $(\Delta_k)$ of the form
\eqref{eq:ass_op}.
\begin{proof}[Proof of Theorem \ref{theo:intro_weyl_local}] We use the same
  notation as in Corollary \ref{cor:local-spectr-estim} and its proof.  Let $a,b  \in ]
  - \infty, \bs] \setminus \Si_y$.  We have $\spec (f(y) ) = \Si_y \cap
  ]-\infty, \bs]$. When $[a,b] \cap \Si_y$ is empty, the first part of Theorem
  \ref{theo:local-spectr-estim} implies that $N(y,a,b,k) = \bigo (
  k^{-\infty})$. 
To the contrary, assume that $[a,b] \cap \Si_y = \{ \la \}$. Then choose a
function $g \in \Ci_0 ( ]a,b[, \R)$ which is equal to $1$ on $]\la - \ep, \la
+ \ep[$ for some $\ep >0$. Since $N(y,a, \la - \ep, k ) = \bigo (
k^{-\infty})$ and $N(y, \la+ \ep , b, k) = \bigo ( k^{-\infty})$ by the first
part of the proof,
$$ N(y,a,b, k ) = g ( k^{-1} \Delta_k) (y,y) + \bigo ( k^{-\infty}).$$
Since $g( k^{-1} \Delta_k)$ is in $\lag^+ (A)$ and has symbol $g ( \PP)$ we
have by \cite[Theorem 2.2, Assertion 5 and Proposition 5.6]{oimpart1}
$$ g ( k^{-1} \Delta_k) (y,y)  = \Bigl( \frac{k}{2\pi} \Bigr)^n \sum_{\ell =0
}^{\infty} m_{\ell, \la } k^{-\ell} + \bigo ( k^{-\infty})$$
with $m_{0, \la} = \op{tr} g ( \PP) (y)$, so $m_{0,\la}$ is the multiplicity
of $\la$ as an eigenvalue of $\PP_y$. 
\end{proof}

\section{Miscellaneous proofs} \label{sec:misceleanous-proofs}

\begin{proof}[Proof of Lemma \ref{lem:Darboux}]
  This is essentially Darboux lemma with parameters. We can adapt the
   proof presented in  \cite[Section 3.2]{MS}. A more efficient
  approach based on \cite{BLM} is as follows. 
First, if $r$ is sufficiently small, for any $y$, the exponential map $\exp _y :T_yM
\rightarrow M$ restricts to an embedding from $B_y(r)$ into $M$. Identify $
U = \exp_y ( B_y (r))$ with an open set of $T_yM$. We are looking for a
diffeomorphism $\varphi $ defined on a neighborhood of the origin of $T_yM$
such that $\varphi ( 0 ) =0$, $T_0 \varphi = \op{id}$ and $\varphi ^* \om$ is
constant. The important point is to define $\varphi$ in such a way that it
depends smoothly on $y$. 

Let $\al$
be the primitive of $\om$ on $U$ obtained by radial homotopy. So
\begin{gather} \label{eq:al_x}
\al_x ( v) = \int_0^1 \om_{tx} ( tx, v) \; dt, \qquad x \in U, \;
v \in T_yM 
\end{gather}
and $d \al = \om$. Let $X$ be the vector field of $U$ such that $
\iota_X \om = 2 \al$. By Poincar\'{e} Lemma, $\mathcal{L}_X \om  = 2 \om$.
Furthermore, linearising $\al$ at the origin, we see that
$X = E + \bigo (2)$ with $E$  the Euler vector field of $T_yM$. Since $Z =
X -E$ vanishes to second order at the origin, the family $Z_t ( x) :=
\frac{1}{t^2}  Z  ( t x)$ extends smoothly at $t=0$. Let $\varphi_t$ be the
flow of the time-dependent vector field $Z_t$ of $U$, that is
$\varphi_0 ( x) = x$ and $\dot{\varphi}_t(x) = Z_t ( \varphi_t(x))$. Since $Z_t$
is zero at the origin, $\varphi_1$ is a germ of diffeomorphism of $(T_yM, 0)$. 
By the
proof of Lemma 2.4 in \cite{BLM}, $\varphi_1^* X = E$, where  the pull-back is
defined by $ \varphi_1^* X = ( \varphi_1^{-1})_* X$. So $\mathcal{L}_X \om = 2
\om$ implies that $\mathcal{L}_{E} \varphi^*_1 \om = 2 \varphi_1^* \om$. So
$\varphi_1^* \om$ is constant.  

To conclude, observe that $\varphi_1$ depends smoothly on $y$ because $\al$
given in \eqref{eq:al_x} depends smoothly on $y$, so the same holds for $X$
and $Z_t$, and the solution of a first order differential equation depending
smoothly on a parameter, is smooth with respect to the parameter. Finally the
radius $r_0$ is chosen so that $\varphi_1$ is defined on $ B_y (r_0)$. Since
$M$ is compact, we can choose $r_0>0$ independent of $y$.  
\end{proof}

\begin{proof}[Proof of Lemma \ref{lem:covering}]
Let $d$ be the geodesic distance of $M$ associated to our Riemannian metric.  Starting from $d ( y, \exp_y (\xi) ) =\| \xi \|$ when $\xi$ is sufficiently
close to the origin,  we get that 
\begin{gather} \label{eq:ineq_norme}
C^{-1} \| \xi \| \leqslant  d( y, \Psi_y ( \xi)) \leqslant C \| \xi \|  
\end{gather}
for any $\xi \in B_y(r_1)$ with $r_1$ sufficiently small. So if $B( y,r)$ is 
the open ball of the metric space $(M,d)$, then $\Psi_y ( B_y(r ) ) \subset B( y, 
r C))$ and $B( y, r ) \subset \Psi_y ( B_y ( rC))$.
Define
$$v_- ( \ep) = \inf \{ \op{vol} (B(y, \ep)), \; y \in M\}, \quad v_+ ( \ep ) =
\sup \{ \op{vol} (B(y, \ep)), \; y \in M\} $$
Then, replacing $C$ by a larger constant if necessary, when $\ep$ is
sufficiently small, $C^{-1} \ep ^{2n} \leqslant v_- ( \ep)$ and $v_+ ( \ep )
\leqslant C \ep ^{2n}$.

For any $\ep>0$, choose a maximal subset $J(\ep)$ of $M$ such that the balls $B( y,
\ep/2)$, $y \in J( \ep)$, are mutually disjoint. From the maximality, $M
\subset \bigcup_{y \in J(\ep)} B(y, \ep)$ so that the sets $ U_y (\ep):= \Psi_y ( B_y (
\ep C)) $, $ y \in J(\ep)$ cover $M$. For any $x \in M$, let $N(x, \ep)$ be the
number of $y \in J(\ep)$ such that $x\in U_y(\ep) $. If $x \in U_y(\ep)$, by triangle
inequality, $B(y, \ep/2) \subset B(x, \ep ( 1 + C^2))$. Since the balls $B(y,
\ep/2)$, $y \in J(\ep)$ are mutually disjoint, it comes that 
$$ N(x, \ep) v_{-} (\ep/2) \leqslant \op{vol} ( B( x, \ep ( 1+ C^2))) \leqslant
v_{+} ( \ep ( 1+ C^2 ))$$
So $N(x, \ep) \leqslant C^2(2 ( 1+ C^2))^{2n}$. So the multiplicity of the cover
$U_y(\ep)$, $y \in J(\ep)$ is bounded independently of $\ep$. 
\end{proof}

\bigskip
\noindent
\begin{tabular}{l}
Laurent Charles, \\  Sorbonne Universit\'e, CNRS, \\ 
 Institut de Math\'{e}matiques 
 de Jussieu-Paris Rive Gauche, \\
  F-75005 Paris, France. 
\end{tabular}

\end{document}